\documentclass[10pt,a4paper]{article}

\usepackage{amsmath,amsfonts,amsthm,amssymb,amscd, appendix, verbatim}
\usepackage{xpatch}
\usepackage[utf8]{inputenc}
\usepackage{setspace, color}
\usepackage{array}
\usepackage{cite}

\usepackage{mathtools}

\usepackage[colorlinks=true, linkcolor=blue, citecolor=blue, urlcolor=blue]{hyperref}

\newcommand{\caixa}{\hglue15.7cm$\square$\vspace{5mm}}
\newtheorem{theorem}{Theorem}[section]
\newtheorem{corollary}[theorem]{Corollary}

\newtheorem{lemma}[theorem]{Lemma}

\theoremstyle{definition}

\newtheorem{remark}[theorem]{Remark}

\newcommand{\R}{\mathbb{R}}

\makeatletter
   \xpatchcmd{\@thm}{\fontseries\mddefault\upshape}{}{}{} 
\makeatother

\usepackage[margin=1in]{geometry}

\title{Local existence of solutions and blow-up criteria for the Boussinesq equations in Lei-Lin-Gevrey Spaces}
\author{
Patrícia L. Guidolin\footnote{Departamento de Matemática Pura e Aplicada, Universidade Federal do Rio Grande do Sul, Porto
Alegre, RS 91509-900, Brazil, e-mail: patricia.guidolin@ufrgs.br}\,, Wilberclay G. Melo\footnote{Corresponding author: Departamento de Matemática, Universidade Federal de Sergipe, São Cristóvão SE 49100-000,
  Brazil, e-mail: wilberclay@academico.ufs.br}\,, and Thyago S. R. Santos\footnote{Departamento de Matemática,  Instituto de Matemática, Estatística e Computação Científica, Universidade Estadual de Campinas, Campinas SP 13083-859, Brazil e-mail: thyagosr@unicamp.br. This author was partially supported by FAPESP grant No. 2024/15587-1.}}
\date{}

\begin{document}

\maketitle

\begin{abstract}
\noindent This paper studies the local well-posedness and the behavior at potential finite blow-up times of mild solutions to the three-dimensional fractional Boussinesq equations in Lei-Lin and Lei-Lin-Gevrey spaces $\mathcal{X}_{a,\sigma}^s(\mathbb{R}^3)$. By combining fixed point arguments with Fourier estimates adapted to these spaces, we obtain local existence and uniqueness results for data in $\mathcal{X}_{a,\sigma}^s(\mathbb{R}^3)$, including the usual Lei-Lin case $a=0$. We also establish several blow-up criteria for maximal mild solutions and derive lower bounds for the growth of the corresponding norms as the maximal time is approached. In the strict Lei-Lin-Gevrey regime, and when the dissipative exponents coincide, these estimates yield an exponential type blow-up criterion.
\end{abstract}

\textbf{Key words:} {\it Boussinesq equations; local existence of mild solutions; blow-up criteria; Lei-Lin spaces; Gevrey class}

\textbf{AMS Mathematics Subject Classification:} 35B44, 35A01, 35A02, 35Q30, 35Q35.

\section{Introduction}

The goal of this paper is to investigate the local solvability and the behavior at possible finite blow-up times of mild solutions to the three-dimensional fractional Boussinesq equations in Lei-Lin and Lei-Lin-Gevrey spaces. More precisely, we study
\begin{equation}\label{MHD-alpha}
\left\{
\begin{array}{l}
u_t
\;\!+\,
(-\Delta)^{\alpha}\,u
\,+\,
u \cdot \nabla u
\,+\,
\nabla \;\!p \:\!
\;=\;
\theta\, e_3, \quad x\in \mathbb{R}^3, t>0,\\
%
%
%
%
\theta_t
\;\!\,+\,
(-\Delta)^{\beta}\,\theta
\,\,+\,
u \cdot \nabla \theta
\;=\;
0, \quad x\in \mathbb{R}^3, t> 0,\\
%
%
\mbox{div}\:u  \;=\; 0, \quad x\in \mathbb{R}^3, t>0,\\
%
%
u(x,0) \,=\, u_0(x),
\;\,
\theta(x,0) \,=\, \theta_0(x), \quad x\in \mathbb{R}^3,
\end{array}
\right.
\end{equation}
where $u(x,t)=(u_1(x,t),u_2(x,t),u_3(x,t))\in\mathbb{R}^3$ denotes the incompressible velocity field, $p=p(x,t)$ is the scalar pressure, and $\theta=\theta(x,t)\in\mathbb{R}$ represents the temperature, density fluctuation, or buoyancy variable, according to the physical interpretation under consideration. The vector $e_3=(0,0,1)$ is the vertical direction of gravity, and the force $\theta e_3$ is the buoyancy forcing. Throughout the paper we assume
$\alpha,\beta\in[\tfrac12,\infty)$ and $\operatorname{div}u_0=0$. For a suitable function $f$, the fractional Laplacian is defined in the Fourier side by
$$
\mathcal{F}[(-\Delta)^\gamma f](\xi)=|\xi|^{2\gamma}\widehat{f}(\xi).
$$
Thus the parameters $\alpha$ and $\beta$ measure, respectively, the strength of the velocity dissipation and of the thermal diffusion. The borderline value $1/2$ is especially delicate in the present approach, since the dissipation has exactly the strength needed to compensate one spatial derivative coming from the transport nonlinearities.

The Boussinesq system (\ref{MHD-alpha}) is a fundamental model in fluid mechanics and geophysical fluid dynamics. It describes incompressible flows in which density or temperature variations are small enough to be neglected everywhere except in the gravitational force. In this sense, it retains the essential coupling between hydrodynamic transport and buoyancy while avoiding the full compressible dynamics. Such equations appear naturally in the study of atmospheric and oceanic motion, thermal convection, reactive fronts and related stratified flows; see, for instance, \cite{Gill1982,MR2245751,MR2540168,cilon1}. Fractional versions of the model have also received substantial attention. They are useful both as mathematical interpolations between weak and strong dissipative mechanisms and as models for anomalous diffusion or nonlocal dissipation in geophysical regimes. See, among others, \cite{MR2379269,MR3008326,MR3503190,MR3759571,MR3808344,MR4795499,MR4884564,thyago5,thyago8} and the references therein.

Observe that, if $\theta\equiv0$, then \eqref{MHD-alpha} reduces to the usual fractional Navier--Stokes equations
\begin{equation}\label{NS}
\left\{
\begin{array}{l}
u_t
\;\!+\,(-\Delta)^{\alpha} u
\;\!+\,u \cdot \nabla u
\,+\,
\nabla \;\!p
\;=\;
0, \quad x\in \mathbb{R}^3,  t > 0,\\
\mbox{div}\:u  \;=\; 0, \quad x\in \mathbb{R}^3, t > 0,\\
u(x,0) \,=\, u_0(x), \quad x\in \mathbb{R}^3.
\end{array}
\right.
\end{equation}
Consequently, the Boussinesq equations (\ref{MHD-alpha}) may be seen as a Navier--Stokes type dynamics forced by an additional unknown $\theta$, whose own evolution is transported by the velocity field. This apparently simple coupling is one of the main sources of difficulty. In the velocity equation, the temperature acts as a linear forcing term; in the temperature equation, the velocity acts through the transport nonlinearity. Therefore, unlike the pure Navier--Stokes case \eqref{NS}, the fixed point map contains not only quadratic terms but also a genuinely coupled linear contribution.

The functional framework of this paper is the scale of Lei-Lin-Gevrey spaces. For $a\geq0$, $\sigma\geq1$ and $s\in\mathbb{R}$, we consider
$$
\mathcal{X}_{a,\sigma}^s(\mathbb{R}^3):=\left\{f\in S'(\mathbb{R}^3):\widehat f\in L^1_{\operatorname{loc}}(\mathbb{R}^3)\ \hbox{and}\ \|f\|_{\mathcal{X}_{a,\sigma}^s}<\infty\right\},
$$
where
$$
\|f\|_{\mathcal{X}_{a,\sigma}^s}:=
\int_{\mathbb{R}^3}|\xi|^s e^{a|\xi|^{1/\sigma}}|\widehat f(\xi)|\,d\xi.
$$
When $a=0$ this space becomes the usual Lei-Lin space $\mathcal{X}^s(\mathbb{R}^3)$. In particular, $\mathcal{X}^0(\mathbb{R}^3)$ is a Fourier $L^1(\mathbb{R}^3)$ space, closely related to the Wiener algebra
$$
\mathcal{A}(\mathbb{R}^3)=\mathcal{F}L^1(\R^3)
:=
\left\{
f\in S'(\mathbb{R}^3): \widehat f\in L^1(\mathbb{R}^3)
\right\},
$$
while the negative indices $s\in[-1,0]$ allow one to include rougher data by weakening the high-frequency weight. The case $\mathcal{X}^{-1}(\mathbb{R}^3)$ is especially meaningful in the Navier--Stokes theory, since it is invariant under the classical Navier--Stokes scaling and was central in the work of Lei and Lin \cite{LeiLin2011}; see also \cite{Nati,toro,wilberclay26} for related developments.

The exponential factor $e^{a|\xi|^{1/\sigma}}$ gives the Gevrey refinement of the Lei-Lin scale. Intuitively, membership in $\mathcal{X}_{a,\sigma}^s(\mathbb{R}^3)$ says that the Fourier transform decays fast enough at high frequencies after being multiplied by both a polynomial weight $|\xi|^s$ and an exponential Gevrey weight. The parameter $s$ measures Sobolev-type differentiability in an $L^1(\mathbb{R}^3)$ Fourier sense. The parameter $a$ measures the amount of exponential frequency localization and $\sigma$ describes the Gevrey order. When $\sigma=1$, such exponential weights are connected with analytic regularity and complex strip extensions through Paley--Wiener type principles, see \cite{HormanderALPDO1}. For $\sigma>1$, the space corresponds to a Gevrey, generally non-analytic, regime: the high frequencies still decay subexponentially, and this decay encodes a controlled growth of derivatives. This perspective is consistent with the broader role of Gevrey classes in fluid equations, as in \cite{MR1026858,MR2265624,MR2169876,MR3504420,MR4369830,Be14,Be16}.

There are two reasons why the Lei-Lin-Gevrey framework is particularly well adapted to \eqref{MHD-alpha}. First, the fractional heat semigroup $e^{-t(-\Delta)^\gamma}$ has a simple Fourier multiplier representation, and its smoothing effect can be measured directly by moving powers of $|\xi|$ between the dissipative factor and the Lei-Lin-Gevrey norm. Second, the nonlinear terms are naturally estimated by convolution in Fourier variables. The inequality
$$
e^{a|\xi|^{1/\sigma}}\leq e^{a|\xi-\eta|^{1/\sigma}}e^{a|\eta|^{1/\sigma}},\qquad \sigma\geq1,
$$
together with suitable polynomial decompositions of $|\xi|^s$, allows one to control products such as $u\cdot\nabla u$ and $u\cdot\nabla\theta$ in the same scale. In this way the Gevrey weight is not merely an extra regularity assumption, it is a structural device that remains compatible with the bilinear estimates required by the equations.

\begin{remark}
It is useful to distinguish the Lei-Lin-Gevrey framework from the more classical Sobolev-Gevrey setting
$$
\dot{H}_{a,\sigma}^s(\mathbb{R}^3)
    = \left\{ f \in \mathcal{S}'(\mathbb{R}^3) : \widehat{f} \in L^1_{\operatorname{loc}}(\mathbb{R}^3)\,\, \text{and} \,\, \ 
       \|f\|_{\dot{H}_{a,\sigma}^s}:=\Big[\int_{\mathbb{R}^3} |\xi|^{2s} e^{2a |\xi|^{1/\sigma}} |\widehat{f}(\xi)|^2\, d\xi\Big]^{\frac{1}{2}} < \infty
      \right\}.
$$
Sobolev-Gevrey spaces are usually based on weighted Fourier $L^2(\mathbb{R}^3)$ norms and therefore inherit a Hilbert space structure well adapted to energy methods, Plancherel's identity and cancellations in $L^2(\mathbb{R}^3)$-based estimates. In contrast,  Lei-Lin-Gevrey spaces $\mathcal{X}_{a,\sigma}^s(\mathbb{R}^3)$ are built on weighted Fourier $L^1(\mathbb{R}^3)$ norms. This gives a Wiener-type structure in which products are handled through convolution estimates and the fractional heat semigroup acts directly on the norm. This distinction is important for our purposes. In the Sobolev-Gevrey setting, one usually works with energy-type estimates based on $L^2(\mathbb{R}^3)$ norms. In the Lei-Lin-Gevrey setting, the norm is an $L^1(\mathbb{R}^3)$ norm of the Fourier transform, so the heat semigroup and the nonlinear products can be estimated more directly in frequency space. This is useful here because both the construction of mild solutions and the blow-up criteria rely on controlling the same Fourier-based norms.
\end{remark}

After applying  Leray projector $\mathbb{P}$ to remove the pressure, the system \eqref{MHD-alpha} is represented, in the mild form, by
\begin{align}\label{Mild SOlution Form}
\begin{cases}
u(t)=e^{- t(-\Delta)^{\alpha}}u_0 - \displaystyle \int_{0}^te^{- (t-\tau)(-\Delta)^{\alpha}} \mathbb{P}(u\cdot\nabla u)\,d\tau + \int_{0}^te^{- (t-\tau)(-\Delta)^{\alpha}} \mathbb{P}(\theta e_3)\,d\tau,\\
 \theta(t)=e^{- t(-\Delta)^{\beta}}\theta_0 - \displaystyle \int_{0}^te^{-  (t-\tau)(-\Delta)^{\beta}} [u\cdot \nabla\theta]\,d\tau.
\end{cases}
\end{align}
These formulas clarify the strategy of the paper. The local theory is obtained by constructing a fixed point in a space that simultaneously controls $C_T(\mathcal{X}_{a,\sigma}^s(\mathbb{R}^3))$ and the dissipative integrability spaces $L_T^1(\mathcal{X}_{a,\sigma}^{s+2\alpha}(\mathbb{R}^3))$ and $L_T^1(\mathcal{X}_{a,\sigma}^{s+2\beta}(\mathbb{R}^3))$. The blow-up criteria are then derived by combining the local theory with continuation arguments and nonlinear estimates that quantify how the relevant Lei-Lin or Lei-Lin-Gevrey norms must degenerate if the maximal time of existence is finite.

\bigskip

\noindent\textbf{Local existence of mild solutions:} By using an argument of the fixed point, we shall prove the existence of a unique local mild solution for the fractional Boussinesq equations (\ref{MHD-alpha}) in Lei-Lin-Gevrey spaces, by including the usual Lei-Lin spaces as the case $a=0$. Due to the mathematical nature of this system and, more precisely, because of the term $\theta e_3$ in (\ref{MHD-alpha}), it is necessary to use a fixed point result slightly more general than the one usually applied to the Navier--Stokes equations (\ref{NS}) (see Lemma \ref{lemaB} below and Lemma 3.2 in \cite{Nati}). It is relevant to point out that Lemma \ref{lemaB}, with $L=0$, has been used in the study of mild solutions and their behavior for the Navier--Stokes case and related systems in the recent literature, see, for example, \cite{wilberclay26,Nati,artigowilberthyagomanasses,toro,Nata,coriolis,Be14,Be16} and references therein. The novelty here is that the Boussinesq coupling requires the fixed point argument to incorporate the linear forcing produced by the temperature while keeping the sharp Fourier estimates needed for the Lei-Lin-Gevrey scale.

Our first existence result is stated as follows.

\begin{theorem}\label{teoremaexistenciaB2}
Assume that $a\geq0$, $\sigma\geq1$, $\alpha \geq \frac{1}{2}$, $\beta\geq \frac{1}{2}$ and denote $q_{\alpha,\beta}:=\max\big\{1-2\alpha,\tfrac{\alpha(1-2\beta)}{\beta},\tfrac{\beta(1-2\alpha)}{\alpha}\big\}$. Let  $s\in[\max\{-1,q_{\alpha,\beta}\},0]$ and  $(u_0,\theta_0)\in \mathcal{X}_{a,\sigma}^s(\mathbb{R}^3)$. Then, there exist an instant $T>0$ and a unique solution 
$$(u,\theta)\in [C_{T}(\mathcal{X}_{a,\sigma}^s(\mathbb{R}^3))\cap L^1_{T}(\mathcal{X}_{a,\sigma}^{s+2\alpha}(\mathbb{R}^3))]\times [C_{T}(\mathcal{X}_{a,\sigma}^s(\mathbb{R}^3))\cap L^1_{T}(\mathcal{X}_{a,\sigma}^{s+2\beta}(\mathbb{R}^3))]$$ of the Boussinesq equations \emph{(\ref{MHD-alpha})} such that
$$\|u\|_{L^\infty_T(\mathcal{X}_{a,\sigma}^{s})}+\|u\|_{L^1_T(\mathcal{X}_{a,\sigma}^{s+2\alpha})}+\|\theta\|_{L^\infty_T(\mathcal{X}_{a,\sigma}^{s})}+\|\theta\|_{L^1_T(\mathcal{X}_{a,\sigma}^{s+2\beta})}< C_1,$$
where it is considered that $\|(u_0,\theta_0)\|_{\mathcal{X}_{a,\sigma}^{s}}$ is sufficiently small  in the case $s=q_{\alpha,\beta}$. Moreover, it holds 
$$\|u\|_{L^\infty_T(\mathcal{X}_{a,\sigma}^{s})}+\|u\|_{L^1_T(\mathcal{X}_{a,\sigma}^{s+2\alpha})}+\|\theta\|_{L^\infty_T(\mathcal{X}_{a,\sigma}^{s})}+\|\theta\|_{L^1_T(\mathcal{X}_{a,\sigma}^{s+2\beta})}\leq C_2\|(u_0,\theta_0)\|_{\mathcal{X}_{a,\sigma}^{s}}$$
and also,  for every $p\geq 1$, we conclude
$$(u,\theta)\in L^p_{T}(\mathcal{X}_{a,\sigma}^{s+\frac{2\alpha}{p}}(\mathbb{R}^3))\times L^p_{T}(\mathcal{X}_{a,\sigma}^{s+\frac{2\beta}{p}}(\mathbb{R}^3)).$$ Here,  $C_i$ \emph{(}$i=1,2$\emph{)} is a positive constant.
\end{theorem}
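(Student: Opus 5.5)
The plan is to apply the generalized fixed point result (Lemma \ref{lemaB}) in the Banach space
$$
\mathcal{E}_T:=\big[C_T(\mathcal{X}_{a,\sigma}^s)\cap L^1_T(\mathcal{X}_{a,\sigma}^{s+2\alpha})\big]\times\big[C_T(\mathcal{X}_{a,\sigma}^s)\cap L^1_T(\mathcal{X}_{a,\sigma}^{s+2\beta})\big],
$$
equipped with the sum of the four norms appearing in the statement. We write the mild formulation \eqref{Mild SOlution Form} as $(u,\theta)=y_0+L(u,\theta)+B((u,\theta),(u,\theta))$. Here $y_0=(e^{-t(-\Delta)^\alpha}u_0,e^{-t(-\Delta)^\beta}\theta_0)$, the linear part is $L(u,\theta)=(\int_0^t e^{-(t-\tau)(-\Delta)^\alpha}\mathbb{P}(\theta e_3)d\tau,0)$, and the bilinear part $B$ collects the two transport terms.

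\textbf{Linear estimates.} Since $|\widehat{\mathbb P f}|\le|\widehat f|$, the semigroup bound follows directly on the Fourier side. We have $\sup_t\|e^{-t(-\Delta)^\alpha}u_0\|_{\mathcal X^s_{a,\sigma}}\le\|u_0\|$. Moreover, $\int_0^T|\xi|^{2\alpha}e^{-t|\xi|^{2\alpha}}dt\le1$, so $\|y_0\|_{\mathcal E_T}\le 2\|(u_0,\theta_0)\|$. We also have $\|y_0\|_{L^1_T(\mathcal X^{s+2\alpha})}\to0$ as $T\to0$ by dominated convergence. For $L$, the same Fourier computation gives
$$
\|L(u,\theta)\|_{\mathcal E_T}\le C\int_0^T\|\theta\|_{\mathcal X^s}d\tau\le CT\|\theta\|_{L^\infty_T(\mathcal X^s)},
$$
so its operator norm is $\le CT<1$ for small $T$. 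This is exactly the hypothesis Lemma \ref{lemaB} imposes on the linear part.

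\textbf{Bilinear estimate (main obstacle).} We need
$$
\|B(v,w)\|_{\mathcal E_T}\le C\,\Theta_T(v)\,\Theta_T(w),
$$
where $\Theta_T$ is a mixed norm in $L^p_T(\mathcal X^{s+2\alpha/p})$ or $L^p_T(\mathcal X^{s+2\beta/p})$. First, interpolation (Hölder in $t$ and $\xi$) gives
$$
\|f\|_{L^p_T(\mathcal X^{s+2\gamma/p})}\le\|f\|_{L^\infty_T(\mathcal X^s)}^{1-1/p}\|f\|_{L^1_T(\mathcal X^{s+2\gamma})}^{1/p},
$$
which also proves the final claim of the theorem. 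For the nonlinearity, we use $\mathrm{div}\,u=0$ to write $u\cdot\nabla\theta=\nabla\cdot(u\theta)$, and similarly for $u\cdot\nabla u$. We then combine $e^{a|\xi|^{1/\sigma}}\le e^{a|\xi-\eta|^{1/\sigma}}e^{a|\eta|^{1/\sigma}}$ with a Lei-Lin product estimate of the form
$$
\|fg\|_{\mathcal X^{s}_{a,\sigma}}\lesssim\|f\|_{\mathcal X^{s_1}_{a,\sigma}}\|g\|_{\mathcal X^{s_2}_{a,\sigma}}+\|f\|_{\mathcal X^{s_2}_{a,\sigma}}\|g\|_{\mathcal X^{s_1}_{a,\sigma}},\qquad s_1+s_2=s,\ s_i\ge0 \text{ or } s\le 0 \text{ suitably},
$$
which holds since $|\xi|^{s}\le$ (sum of power splits) for $s\le0$ via $|\xi|\le|\xi-\eta|+|\eta|$. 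Taking the $\mathcal X^{s+2\gamma}$ norm of the Duhamel term and integrating in time yields a bound in $L^1_T(\mathcal X^{s+1})$ for the product. Here $\gamma=\alpha$ or $\beta$, and this step uses $\int_0^T e^{-(t-\tau)|\xi|^{2\gamma}}dt\le|\xi|^{-2\gamma}$. We then distribute the $s+1$ derivatives between the two factors as $s+2\alpha/p$ and $s+2\beta/p'$, with time exponents $p,p'$ satisfying $1/p+1/p'=1$, and bound by Hölder in time. Balancing the regularity gives the constraint $2s+2\alpha/p+2\beta/p'\ge s+1$, and also requires nonnegative indices. Solving this constraint for the $u\theta$ term, and for the $uu$ term with $p=2$, reproduces precisely $s\ge q_{\alpha,\beta}$; this case bookkeeping is the delicate part. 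When $s>q_{\alpha,\beta}$ there is slack, which produces a factor $T^{\epsilon}$, or a small high-frequency part of $y_0$. When $s=q_{\alpha,\beta}$ there is no gain, so the proof must instead rely on smallness of the data.

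\textbf{Conclusion.} Lemma \ref{lemaB} then yields a unique fixed point in a ball of radius $\sim C\|(u_0,\theta_0)\|$, hence the bound with $C_2$. Continuity in time into $\mathcal X^s_{a,\sigma}$ follows from dominated convergence on the Fourier side. Uniqueness within the class follows from the same contraction estimate on a short interval, iterated.
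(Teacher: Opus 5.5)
Your architecture is the paper's: Lemma \ref{lemaB} on $X\times Y$, the bound $\|L\|\le 2T$, the bound $2\|(u_0,\theta_0)\|_{\mathcal{X}^s_{a,\sigma}}$ for the free part, Lemma \ref{lemacalor} reducing the Duhamel terms to $\|u\theta\|_{L^1_T(\mathcal{X}^{s+1}_{a,\sigma})}$ and $\|u\otimes u\|_{L^1_T(\mathcal{X}^{s+1}_{a,\sigma})}$, and the dichotomy $s>q_{\alpha,\beta}$ (a factor $T^\rho$ with $\rho>0$) versus $s=q_{\alpha,\beta}$ (small data).

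\textbf{The gap is the bilinear step.} The product rule you invoke splits the $s+1$ derivatives as $s_1+s_2=s+1$ with $s_1,s_2>0$. That rule is false in these Fourier-$L^1$ spaces. Fix $f$ with $\widehat f$ a bump near $|\xi|=1$, and let $\widehat g=\varepsilon^{-3}\chi(\cdot/\varepsilon)$ with $\chi\ge 0$ and $\int\chi=1$. Then $\widehat{fg}$ is a fixed multiple of $\widehat f*\widehat g$, which converges to that multiple of $\widehat f$. So $\|fg\|_{\mathcal{X}^{s+1}_{a,\sigma}}$ stays of order one, while $\|g\|_{\mathcal{X}^{s_i}_{a,\sigma}}=O(\varepsilon^{s_i})$ sends your right-hand side to $0$. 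Pointwise, $|\xi|^{s+1}$ is not controlled by $|\xi-\eta|^{s_1}|\eta|^{s_2}$ as $\eta\to0$.

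What is true, and what the paper uses, is Lemma \ref{lema2} together with $\|\cdot\|_{\mathcal{X}^0_{a/\sigma,\sigma}}\le\|\cdot\|_{\mathcal{X}^0_{a,\sigma}}$. This rests on $|\xi|^{s+1}\le|\xi-\eta|^{s+1}+|\eta|^{s+1}$ for $0\le s+1\le1$, plus the subadditivity of the Gevrey weight. All derivatives fall on one factor and the other factor sits in the Wiener algebra:
\[
\|u\theta\|_{\mathcal{X}^{s+1}_{a,\sigma}}\le C_s\big(\|u\|_{\mathcal{X}^{0}_{a,\sigma}}\|\theta\|_{\mathcal{X}^{s+1}_{a,\sigma}}+\|u\|_{\mathcal{X}^{s+1}_{a,\sigma}}\|\theta\|_{\mathcal{X}^{0}_{a,\sigma}}\big).
\]

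Hence \emph{both} cross terms must be closed, and that is exactly where the maximum in $q_{\alpha,\beta}$ comes from. Your own interpolation inequality (equivalently Lemma \ref{lema1}) places $\|u\|_{\mathcal{X}^0_{a,\sigma}}$ in $L^{-2\alpha/s}_T$ and $\|\theta\|_{\mathcal{X}^{s+1}_{a,\sigma}}$ in $L^{2\beta}_T$. H\"older in time then needs $\frac{1}{2\beta}-\frac{s}{2\alpha}\le 1$, i.e.\ $s\ge\frac{\alpha(1-2\beta)}{\beta}$. The other cross term needs $s\ge\frac{\beta(1-2\alpha)}{\alpha}$, and $u\otimes u$ needs $s\ge 1-2\alpha$. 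The leftover powers are $T^{1+\frac{s}{2\alpha}-\frac{1}{2\beta}}$, $T^{1+\frac{s}{2\beta}-\frac{1}{2\alpha}}$ and $T^{1+\frac{s-1}{2\alpha}}$, whose smallest exponent is the paper's $\rho$.

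Your single-split optimisation does not reproduce this. Take $\alpha=1$, $\beta=\frac12$, $s=-\frac14$: the choice $1/p=\frac34$ gives indices $\frac54$ and $0$ and satisfies all your constraints, yet $q_{1,1/2}=0$. The term $\|u\|_{\mathcal{X}^0_{a,\sigma}}\|\theta\|_{\mathcal{X}^{s+1}_{a,\sigma}}$ genuinely fails there, since $\frac{1}{2\beta}-\frac{s}{2\alpha}=\frac98>1$.

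Finally, at $s=q_{\alpha,\beta}$ the bilinear constant carries no positive power of $T$. So ``iterating the contraction on short intervals'' does not by itself give uniqueness in the whole class. For example, when $s=0$ and $\beta=\frac12$, the term involving $\|\tilde u\|_{L^\infty_T(\mathcal{X}^0_{a,\sigma})}$ has no small factor. The paper only claims uniqueness in the ball furnished by Lemma \ref{lemaB}.
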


In Theorem \ref{teoremaexistenciaB2}, the admissible lower bound for $s$ depends on the dissipative exponents $\alpha$ and $\beta$. This dependence is natural from the viewpoint of the bilinear estimates: the weaker the dissipation is, the closer the regularity index must be to the threshold at which the derivative in the nonlinear transport can be absorbed. In order to obtain a complementary statement in which the range of $s$ is expressed without such explicit dependence on $\alpha$ and $\beta$, we present our second existence result.

\begin{theorem}\label{teoremaexistenciaB}
Assume that $\alpha \geq \frac{1}{2}$, $\beta\geq \frac{1}{2}$,  $(a,s,\sigma)\in\{(0,\infty)\times [-1,0)\times (1,\infty)\}\cup \{[0,\infty)\times\{0\}\times [1,\infty)\}$ and  $(u_0,\theta_0)\in \mathcal{X}_{a,\sigma}^s(\mathbb{R}^3)$. Then, there exist an instant $T>0$ and a unique solution 
$$(u,\theta)\in [C_{T}(\mathcal{X}_{a,\sigma}^s(\mathbb{R}^3))\cap L^1_{T}(\mathcal{X}_{a,\sigma}^{s+2\alpha}(\mathbb{R}^3))]\times [C_{T}(\mathcal{X}_{a,\sigma}^s(\mathbb{R}^3))\cap L^1_{T}(\mathcal{X}_{a,\sigma}^{s+2\beta}(\mathbb{R}^3))]$$ of the Boussinesq equations \emph{(\ref{MHD-alpha})} such that
$$\|u\|_{L^\infty_T(\mathcal{X}_{a,\sigma}^{s})}+\|u\|_{L^1_T(\mathcal{X}_{a,\sigma}^{s+2\alpha})}+\|\theta\|_{L^\infty_T(\mathcal{X}_{a,\sigma}^{s})}+\|\theta\|_{L^1_T(\mathcal{X}_{a,\sigma}^{s+2\beta})}< C_1,$$
where it is considered that $\|(u_0,\theta_0)\|_{\mathcal{X}_{a,\sigma}^{s}}$ is sufficiently small in the case $\alpha=\frac{1}{2}$ or $\beta=\frac{1}{2}$. Moreover, it holds 
$$\|u\|_{L^\infty_T(\mathcal{X}_{a,\sigma}^{s})}+\|u\|_{L^1_T(\mathcal{X}_{a,\sigma}^{s+2\alpha})}+\|\theta\|_{L^\infty_T(\mathcal{X}_{a,\sigma}^{s})}+\|\theta\|_{L^1_T(\mathcal{X}_{a,\sigma}^{s+2\beta})}\leq C_{ 2}\|(u_0,\theta_0)\|_{\mathcal{X}_{a,\sigma}^{s}}$$
and also, for every $p\geq 1$, we conclude
$$(u,\theta)\in L^p_{T}(\mathcal{X}_{a,\sigma}^{s+\frac{2\alpha}{p}}(\mathbb{R}^3))\times L^p_{T}(\mathcal{X}_{a,\sigma}^{s+\frac{2\beta}{p}}(\mathbb{R}^3)).$$ Here, $C_i$ \emph{(}$i=1,2$\emph{)} is a positive constant.
\end{theorem}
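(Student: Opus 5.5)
We will prove Theorem \ref{teoremaexistenciaB} with the same fixed point scheme as Theorem \ref{teoremaexistenciaB2}: the mild formulation (\ref{Mild SOlution Form}) combined with the generalized fixed point lemma (Lemma \ref{lemaB}), which allows a bounded linear part of norm $L<1$ in addition to the bilinear part. The only new ingredient is a bilinear estimate in which the Gevrey weight, not a restriction on $s$ depending on $\alpha,\beta$, absorbs the loss of derivatives.

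We work in the Banach space $E_T$ of pairs $(u,\theta)$ normed by
$$\|u\|_{L^\infty_T(\mathcal{X}_{a,\sigma}^{s})}+\|u\|_{L^1_T(\mathcal{X}_{a,\sigma}^{s+2\alpha})}+\|\theta\|_{L^\infty_T(\mathcal{X}_{a,\sigma}^{s})}+\|\theta\|_{L^1_T(\mathcal{X}_{a,\sigma}^{s+2\beta})}.$$
The fixed point problem is $(u,\theta)=(e^{-t(-\Delta)^\alpha}u_0,\,e^{-t(-\Delta)^\beta}\theta_0)+\mathcal{L}(u,\theta)+\mathcal{B}((u,\theta),(u,\theta))$. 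Here $\mathcal{L}$ is the Duhamel integral of $\mathbb{P}(\theta e_3)$ in the first component, and $\mathcal{B}$ collects the two transport terms.

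\textbf{Linear terms.} Since $|\widehat{\mathbb P f}|\le|\widehat f|$, Fubini in frequency gives
$$\|e^{-t(-\Delta)^\gamma}f\|_{L^\infty_T(\mathcal{X}^s_{a,\sigma})}+\|e^{-t(-\Delta)^\gamma}f\|_{L^1_T(\mathcal{X}^{s+2\gamma}_{a,\sigma})}\le 2\|f\|_{\mathcal{X}^s_{a,\sigma}}.$$
For $\mathcal{L}$, the frequency-wise bound $\int_0^t e^{-(t-\tau)|\xi|^{2\alpha}}d\tau\le \min\{T,|\xi|^{-2\alpha}\}$ gives $\|\mathcal{L}(u,\theta)\|_{E_T}\le CT\|\theta\|_{L^\infty_T(\mathcal{X}^s_{a,\sigma})}$. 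So $L=CT<1$ for $T$ small, which is exactly why Lemma \ref{lemaB} with $L>0$ is needed.

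\textbf{Bilinear terms.} Write $\widehat{u\cdot\nabla\theta}(\xi)$ as a convolution bounded by $|\xi|\int|\widehat u(\xi-\eta)||\widehat\theta(\eta)|d\eta$, using $\operatorname{div}u=0$ to move the derivative outside. Then apply
$$e^{a|\xi|^{1/\sigma}}\le e^{a|\xi-\eta|^{1/\sigma}}e^{a|\eta|^{1/\sigma}}.$$
Two cases arise.

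\emph{Case $s=0$.} The weight $|\xi|^{1}\le|\xi-\eta|+|\eta|$ splits the derivative. Using $e^{-(t-\tau)|\xi|^{2\beta}}|\xi|\le C(t-\tau)^{-1/(2\beta)}$ and the interpolation $\|f\|_{\mathcal{X}^{1}}\le\|f\|_{\mathcal{X}^0}^{1-1/(2\beta)}\|f\|_{\mathcal{X}^{2\beta}}^{1/(2\beta)}$, the bilinear operator is bounded on $E_T$ by $C T^{\delta}$ with $\delta=1-\tfrac1{2\beta}>0$ when $\beta>\tfrac12$ (similarly for $\alpha$). When $\alpha=\tfrac12$ or $\beta=\tfrac12$ we get $\delta=0$; then the bound is $C$, independent of $T$, and smallness of the data is required, as the statement says.

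\emph{Case $s\in[-1,0)$, $a>0$, $\sigma>1$.} Here $|\xi|^{s}$ with $s<0$ does not distribute over the convolution. The step I expect to be the main obstacle is compensating the negative power at low frequencies of $\xi-\eta$ or $\eta$. I would try the strict Gevrey gap: for $\sigma>1$ and a small loss $a'<a$, one has $|\xi|^{k}e^{a'|\xi|^{1/\sigma}}\le C_{a-a'}e^{a|\xi|^{1/\sigma}}$. This trades any polynomial weight for a slight reduction of $a$, so I would prove the bilinear estimate
$$\|u\cdot\nabla\theta\|_{\mathcal{X}^{s}_{a,\sigma}}\lesssim \|u\|_{\mathcal{X}^{s}_{a,\sigma}}^{\,1-\lambda}\|u\|_{\mathcal{X}^{s+2\alpha}_{a,\sigma}}^{\,\lambda}\,\|\theta\|_{\dots},$$
splitting into low frequencies ($|\eta|\le1$, handled using $s\ge-1$ and $\widehat f\in L^1_{loc}$) and high frequencies, where $|\xi|^s\le 1$ and the exponential weight dominates $|\eta|^{-s}$. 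The condition $s\ge-1$ is what keeps the factor $|\xi|^{1+s}\ge0$ bounded by $|\xi-\eta|^{1+s}+|\eta|^{1+s}$. I would aim to keep the exponent $a$ fixed, not lose it, by putting the extra positive powers on the $L^1_T(\mathcal{X}^{s+2\gamma})$ factor via interpolation. This again gives a constant $CT^\delta$ with $\delta>0$ exactly when $\alpha,\beta>\tfrac12$.

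\textbf{Conclusion.} With $\|\mathcal B\|\le K(T)$ and $L=CT<1$, Lemma \ref{lemaB} yields existence and uniqueness in the ball of radius $C_1$, together with the bound $C_2\|(u_0,\theta_0)\|_{\mathcal{X}^s_{a,\sigma}}$. The condition is $4K(T)\|(u_0,\theta_0)\|<(1-L)^2$, which holds for small $T$, or for small data in the critical case. Continuity in time follows from dominated convergence in frequency. The $L^p_T(\mathcal{X}^{s+2\gamma/p})$ statement follows from the interpolation $\|f\|_{\mathcal{X}^{s+2\gamma/p}}\le\|f\|_{\mathcal{X}^s}^{1-1/p}\|f\|_{\mathcal{X}^{s+2\gamma}}^{1/p}$ (Hölder in $\xi$) together with the $L^\infty_T$ and $L^1_T$ bounds.
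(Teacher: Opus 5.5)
Your scheme matches the paper: the mild formulation, Lemma \ref{lemaB} with the buoyancy term as a linear part of norm $\lesssim T$, the bound $2\|(u_0,\theta_0)\|_{\mathcal{X}^s_{a,\sigma}}$ on the free evolution, and Hölder in time. Your case $s=0$ is also fine. For $\beta=\frac12$ use the $L^1_T$ interpolation route there, not the pointwise bound $(t-\tau)^{-1/(2\beta)}$, which is not integrable at $\beta=\frac12$.

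The gap is the case $s\in[-1,0)$, $a>0$, $\sigma>1$, which is the real content of this theorem, and there your stated plan fails. If you keep the full exponent $a$ on both factors, then submultiplicativity and $|\xi|^{s+1}\le|\xi-\eta|^{s+1}+|\eta|^{s+1}$ give
\[
\|u\theta\|_{\mathcal{X}^{s+1}_{a,\sigma}}\le\|u\|_{\mathcal{X}^{s+1}_{a,\sigma}}\|\theta\|_{\mathcal{X}^{0}_{a,\sigma}}+\|u\|_{\mathcal{X}^{0}_{a,\sigma}}\|\theta\|_{\mathcal{X}^{s+1}_{a,\sigma}}.
\]
For $s<0$ the factor $\|\theta\|_{\mathcal{X}^{0}_{a,\sigma}}$ is not bounded by $\|\theta\|_{\mathcal{X}^{s}_{a,\sigma}}$. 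Interpolating it, as you propose (Lemma \ref{lema1} ii)), puts total power $\frac{1}{2\alpha}-\frac{s}{2\beta}$ on the $L^1_T$ norms. Hölder in time then forces $s\ge\frac{\beta(1-2\alpha)}{\alpha}$, and likewise $s\ge 1-2\alpha$ for $u\cdot\nabla u$. That is exactly the restricted range of Theorem \ref{teoremaexistenciaB2}. For instance, with $\alpha=\beta=\frac12$ and $s=-1$ (allowed here for small data), the total power is $2>1$, and you get no estimate on $X\times Y$ at all.

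Your remark that the strict Gevrey gap trades polynomial weight for a smaller exponent is the right idea. It only helps, though, if the product estimate lets one factor carry a smaller exponent while the product keeps $a$. Plain submultiplicativity cannot do that, so your stated aim of ``not losing $a$'' points the wrong way. Also, the low-frequency part is controlled by $|\eta|^s\ge1$ for $|\eta|\le1$; the condition $\widehat f\in L^1_{\operatorname{loc}}$ gives no quantitative bound.

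The missing ingredient is a refined triangle inequality for the Gevrey weight. For $\sigma\ge1$ and $|\eta|\le|\xi-\eta|$, the mean value theorem gives $|\xi|^{1/\sigma}\le|\xi-\eta|^{1/\sigma}+\frac{1}{\sigma}|\eta|^{1/\sigma}$. In the same region $|\xi|^{s+1}\le 2^{s+1}|\xi-\eta|^{s+1}$, since $s+1\ge0$. Splitting the convolution by which frequency is larger yields Lemma \ref{lema2}. There the lower-frequency factor enters only through $\|\cdot\|_{\mathcal{X}^0_{\frac{a}{\sigma},\sigma}}$, with no derivative and a reduced exponent, while the output keeps the full weight $|\xi|^{s+1}e^{a|\xi|^{1/\sigma}}$.

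Lemma \ref{lemaprincipal} then closes the gap. Since $a>0$ and $\sigma>1$, the surplus $e^{(a-\frac{a}{\sigma})|\eta|^{1/\sigma}}$ absorbs $|\eta|^{-s}$ for $|\eta|\ge1$, and $|\eta|^s\ge1$ for $|\eta|\le1$. Hence $\|f\|_{\mathcal{X}^0_{\frac{a}{\sigma},\sigma}}\le C_{a,s,\sigma}\|f\|_{\mathcal{X}^s_{a,\sigma}}$; for $s=0$ this is trivial. Only the higher-frequency factor now needs interpolation via Lemma \ref{lema1} i). This gives the factors $T^{1-\frac{1}{2\alpha}}$ and $T^{1-\frac{1}{2\beta}}$ with no restriction on $s$ beyond $s\ge-1$. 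With this bilinear bound in hand, the rest of your argument goes through as written.
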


\begin{remark}
Let us study the assumptions in Theorems \ref{teoremaexistenciaB2} and \ref{teoremaexistenciaB}. First of all, recall that $\alpha\geq \frac{1}{2}$ and $\beta\geq \frac{1}{2}$ in these results. Thus, it follows that $q_{\alpha,\beta}\leq0$ and consider the two cases below:
\begin{enumerate}
\item Assume that $q_{\alpha,\beta}=0$. In this case, one obtains $\alpha=\frac{1}{2}$ or $\beta=\frac{1}{2}$ and, moreover, we infer $$\max\{-1,1-2\alpha,\tfrac{\alpha(1-2\beta)}{\beta},\tfrac{\beta(1-2\alpha)}{\alpha}\}=\max\{-1,q_{\alpha,\beta}\}=0.$$
Therefore, we need to suppose that $\|(u_0,\theta_0)\|_{\mathcal{X}_{a,\sigma}^{s}}$ is sufficiently small in order to obtain a local mild solution (see Theorems \ref{teoremaexistenciaB2} and \ref{teoremaexistenciaB}), where $$(a,s,\sigma) \in\{(0,\infty)\times [-1,0)\times (1,\infty)\}\cup \{[0,\infty)\times\{0\}\times [1,\infty)\}.$$
\item Consider that $q_{\alpha,\beta}<0$. Thereby, one infers $\alpha>\frac{1}{2}$ and $\beta>\frac{1}{2}$ and, furthermore,  we have  a local mild solution (see Theorems \ref{teoremaexistenciaB2} and \ref{teoremaexistenciaB}), where $$(a,s,\sigma) \in [0,\infty)\times[-1,0]\times [1,\infty)$$ for any initial data $(u_0,\theta_0)\in\mathcal{X}_{a,\sigma}^{s}(\mathbb{R}^3)$ if $q_{\alpha,\beta}<-1$,   $$(a,s,\sigma) \in\{(0,\infty)\times[-1,q_{\alpha,\beta}]\times (1,\infty)\}\cup\{[0,\infty)\times(q_{\alpha,\beta},0]\times [1,\infty)\}$$ for any initial data $(u_0,\theta_0)\in\mathcal{X}_{a,\sigma}^{s}(\mathbb{R}^3)$ if $q_{\alpha,\beta}\geq-1$, and $$(a,s,\sigma)\in[0,\infty)\times\{q_{\alpha,\beta}\}\times [1,\infty)$$ if $\|(u_0,\theta_0)\|_{\mathcal{X}_{a,\sigma}^{s}}$ is small enough and $q_{\alpha,\beta}\geq-1$.
\end{enumerate}
Notice that Theorem \ref{teoremaexistenciaB} extends Theorem 1.1 (ii) obtained in \cite{Nati} from the Navier-Stokes  equations (\ref{NS}) to the Boussinesq system (\ref{MHD-alpha}). Moreover, Theorem \ref{teoremaexistenciaB2} shows how to reach the existence and uniqueness of solutions for these systems  in Lei-Lin spaces $\mathcal{X}^s(\mathbb{R}^3)$ ($a=0$), with $s\neq0$, if $\alpha>\frac{1}{2}$ and $\beta>\frac{1}{2}$. Furthermore, if it is assumed that $\alpha,\beta\in(\frac{1}{2},1]$, one has that Theorem \ref{teoremaexistenciaB2} considers the space  $\mathcal{X}_{a,\sigma}^s(\mathbb{R}^3)$, with $$s=\max\{-1,1-2\alpha,\tfrac{\alpha(1-2\beta)}{\beta},\tfrac{\beta(1-2\alpha)}{\alpha}\}=\max\{1-2\alpha,\tfrac{\alpha(1-2\beta)}{\beta},\tfrac{\beta(1-2\alpha)}{\alpha}\},$$ which is not supposed by Theorem 1.1 in \cite{Nata}. In addition, Theorem \ref{teoremaexistenciaB2} also studies the cases $\alpha,\beta>1 $, $\alpha=\frac{1}{2}$ and $\beta=\frac{1}{2}$, these ones are not presented by \cite{Nata}.

\end{remark}

\bigskip

\noindent\textbf{Blow-up criteria for maximal mild solutions:} According to the assumptions and the proofs of Theorems \ref{teoremaexistenciaB2} and \ref{teoremaexistenciaB}, we establish new blow-up criteria for the local solutions of the Boussinesq equations (\ref{MHD-alpha}) obtained in these two results. Such criteria are continuation obstructions: if the maximal time $T^*$ is finite, then the norms controlling the local theory cannot remain bounded up to $T^*$. The results below quantify this obstruction in the Lei-Lin-Gevrey scale and show, in particular, lower bounds with explicit rates as $t\nearrow T^*$. Thus, it is worth to compare our main results below with the ones obtained by \cite{wilberclay26,Nati,Nata,coriolis,thyago7} and references therein.

Let us start enunciating some blow-up criteria for the local solution presented in Theorem \ref{teoremaexistenciaB2}.

\begin{theorem}\label{teoremaB2}
Assume that $a\geq0$, $\sigma \geq1,$ $\alpha>\frac{1}{2}$, $\beta>\frac{1}{2}$,
$ s> \max\{1-2\alpha,\tfrac{\alpha(1-2\beta)}{\beta},\tfrac{\beta(1-2\alpha)}{\alpha}\}$, $ s\in [-1,0]$ and  $(u_0,\theta_0)\in \mathcal{X}_{a,\sigma}^s(\mathbb{R}^3)$.
Consider that $(u,\theta)\in C([0,T^*),\mathcal{X}_{a,\sigma}^{s}(\mathbb{R}^3))$
is  a maximal solution for the Boussinesq equations \emph{(\ref{MHD-alpha})} obtained in Theorem  \emph{\ref{teoremaexistenciaB2}}. If $T^*<\infty$, then
\begin{enumerate}
	\item[\emph{i)}] $\displaystyle \limsup_{t\nearrow T^*} \|(u,\theta)(t)\|_{\mathcal{X}_{a,\sigma}^{s}}=\infty$;
	\item[\emph{ii)}] $\displaystyle\int_t^{T^*}\big[\|u(\tau)\|_{\mathcal{X}_{a,\sigma}^{s}}^{\frac{2\alpha}{2\alpha+s-1}} + \|u(\tau)\|_{\mathcal{X}_{a,\sigma}^{s}}^{\frac{\beta(s+2\alpha)}{\beta(s+2\alpha)-\alpha}}\|\theta(\tau)\|_{\mathcal{X}_{a,\sigma}^{s}}^{\frac{-s\beta}{\beta(s+2\alpha)-\alpha}}+
\|u(\tau)\|_{\mathcal{X}_{a,\sigma}^{s}}^{\frac{-s\alpha}{\alpha(s+2\beta)-\beta}}\|\theta(\tau)\|_{\mathcal{X}_{a,\sigma}^{s}}^{\frac{\alpha(s+2\beta)}{\alpha(s+2\beta)-\beta}}\big]\;d\tau=\infty$;
	\item[\emph{iii)}] $\|(u,\theta)(t)\|_{\mathcal{X}_{a,\sigma}^{s}}^{\frac{2\alpha}{2\alpha+s-1}} + \|(u,\theta)(t)\|_{\mathcal{X}_{a,\sigma}^{s}}^{\frac{2\alpha\beta}{2\alpha\beta+s\beta-\alpha}}
+\|(u,\theta)(t)\|_{\mathcal{X}_{a,\sigma}^{s}}^{\frac{2\alpha\beta}{2\alpha\beta+s\alpha-\beta}}\geq [e^{C(T^*-t)}-1]^{-1}$,
\end{enumerate}
for all $t\in[0,T^*)$, where $C$ is a positive constant.
\end{theorem}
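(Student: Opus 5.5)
The plan is to run the standard continuation argument, using the quantitative local theory from the proof of Theorem \ref{teoremaexistenciaB2}, and then to derive the integral and pointwise lower bounds from a differential inequality in Fourier variables.

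\emph{Part i).} In the strict regime $s>q_{\alpha,\beta}$, the existence time from the fixed point argument (Lemma \ref{lemaB}) depends only on an upper bound for the size of the data. Indeed, the bilinear and linear constants carry a positive power of $T$, coming from interpolation between $L^\infty_T(\mathcal{X}^s_{a,\sigma})$ and $L^1_T(\mathcal{X}^{s+2\alpha}_{a,\sigma})$. Suppose $\|(u,\theta)(t)\|_{\mathcal{X}^s_{a,\sigma}}\le M$ on $[0,T^*)$. Restarting at $t_0=T^*-T_M/2$ then gives a solution on $[t_0,t_0+T_M]$. By uniqueness this extends the maximal solution beyond $T^*$, which is a contradiction.

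\emph{Differential inequality.} For ii) and iii), work directly with $\widehat u,\widehat\theta$. Set $g(t)=\|u(t)\|_{\mathcal{X}^s_{a,\sigma}}+\|\theta(t)\|_{\mathcal{X}^s_{a,\sigma}}$. Differentiating $|\widehat u(\xi,t)|$ along the Fourier form of \eqref{Mild SOlution Form}, multiplying by $|\xi|^s e^{a|\xi|^{1/\sigma}}$ and integrating gives
$$\frac{d}{dt}\|u\|_{\mathcal{X}^s_{a,\sigma}}+\|u\|_{\mathcal{X}^{s+2\alpha}_{a,\sigma}}\le \|u\otimes u\|_{\mathcal{X}^{s+1}_{a,\sigma}}+\|\theta\|_{\mathcal{X}^s_{a,\sigma}},$$
and similarly
$$\frac{d}{dt}\|\theta\|_{\mathcal{X}^s_{a,\sigma}}+\|\theta\|_{\mathcal{X}^{s+2\beta}_{a,\sigma}}\le \|u\theta\|_{\mathcal{X}^{s+1}_{a,\sigma}}.$$
For the products, use $|\xi|^{s+1}\le C(|\xi-\eta|^{s+1}+|\eta|^{s+1})$, the Gevrey submultiplicativity $e^{a|\xi|^{1/\sigma}}\le e^{a|\xi-\eta|^{1/\sigma}}e^{a|\eta|^{1/\sigma}}$, and the $L^\infty$ bound $\|\widehat f\|_{L^1}$. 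These give bounds by quantities like $\|u\|_{\mathcal{X}^{s}_{a,\sigma}}\|u\|_{\mathcal{X}^{1}_{a,\sigma}}$. Because $s\le0$, the factor $\mathcal{X}^0$ has to be interpolated between $\mathcal{X}^s$ and higher indices. The paper's estimates do exactly this, keeping everything inside $\mathcal{X}^s_{a,\sigma}\cap\mathcal{X}^{s+2\alpha}_{a,\sigma}$. Next interpolate $\|u\|_{\mathcal{X}^{1}_{a,\sigma}}$ between $\mathcal{X}^{s}$ and $\mathcal{X}^{s+2\alpha}$ with exponent $\frac{1-s}{2\alpha}$. For the mixed terms, interpolate $u$ and $\theta$ in their respective dissipative scales; for example, the $u\cdot\nabla\theta$ piece splits as $\|u\|$ in $\mathcal{X}^{s+2\alpha}$ and $\theta$ in $\mathcal{X}^{s+2\beta}$. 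Finally apply Young's inequality to absorb the dissipative norms into the left-hand side. The exponents $\frac{2\alpha}{2\alpha+s-1}$, $\frac{\beta(s+2\alpha)}{\beta(s+2\alpha)-\alpha}$, etc., are the Young conjugates. The hypothesis $s>q_{\alpha,\beta}$ is exactly what makes the interpolation exponents lie in $(0,1)$, so this absorption is legitimate. This gives
$$g'(t)\le C\,g(t)\big[1+\Phi(u,\theta)(t)\big],$$
where $\Phi$ is the bracket in ii). The linear term $\|\theta\|$ contributes the extra $Cg$.

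\emph{Parts ii) and iii).} Integrate on $[t,T]$ and apply Gronwall: $g(T)\le g(t)\exp\big(C(T-t)+C\int_t^T\Phi\big)$. If $\int_t^{T^*}\Phi<\infty$, then $g$ stays bounded, contradicting i). This proves ii). For iii), bound $\Phi\le G(g):=g^{p_1}+g^{p_2}+g^{p_3}$ using the monotone powers. Then $g'\le Cg(1+G(g))$. Integrating $\int_{g(t)}^{\infty}\frac{dr}{r(1+G(r))}\le C(T^*-t)$ and estimating the integral from below in terms of $G(g(t))$ gives $G(g(t))\ge[e^{C(T^*-t)}-1]^{-1}$. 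The explicit model case is $\int_y^\infty \frac{dr}{r(1+r^p)}=\frac1p\log(1+y^{-p})$.

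\emph{Main obstacle.} The delicate step is the nonlinear estimate of $\|u\theta\|_{\mathcal{X}^{s+1}_{a,\sigma}}$ and $\|u\otimes u\|_{\mathcal{X}^{s+1}_{a,\sigma}}$ for negative $s$, via interpolation that absorbs into the two different dissipations. I would reuse the product estimates established in the proof of Theorem \ref{teoremaexistenciaB2}, now applied at fixed time instead of in $L^1_T$. Justifying the differentiation of $|\widehat u|$ can be done through the mild form, in a weak/a.e. sense.
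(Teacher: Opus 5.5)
Your proof is correct. Your differential inequality and part ii) are the paper's: you use (\ref{ls3})--(\ref{ls3b}) pointwise, apply Young, run Gronwall on $g(T)\le g(t)+C\int_t^T g(1+\Phi)\,d\tau$, and conclude with i). Parts i) and iii), however, take different routes.

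For i), the paper assumes $\|(u,\theta)(t)\|_{\mathcal{X}^s_{a,\sigma}}\le C_1$. It then uses Young to bound $\int_0^{T^*}(\|u\|_{\mathcal{X}^{s+2\alpha}_{a,\sigma}}+\|\theta\|_{\mathcal{X}^{s+2\beta}_{a,\sigma}})\,d\tau$, and shows through the Duhamel differences $I_1,I_2,I_3$ that $(u,\theta)(t)$ converges in $\mathcal{X}^s_{a,\sigma}$ as $t\nearrow T^*$. Finally it restarts exactly at $T^*$ from that limit. You instead restart at $T^*-T_M/2$, using that for $\rho>0$ the Case 1 existence time $[2+(24C_sM)^{1/2}]^{-2/\rho}$ in the proof of Theorem \ref{teoremaexistenciaB2} depends only on the bound $M$. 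This removes the need for the dissipation bound and the Cauchy analysis. Unlike gluing at $T^*$, though, it needs uniqueness on the overlap $[t_0,T^*)$. Lemma \ref{lemaB} only gives uniqueness in a ball of radius of order $T^{-\rho}$, so you should add that this radius tends to $\infty$ as $T\to0$ (again because $\rho>0$). Then the two solutions agree near $t_0$, and hence on all of $[t_0,T^*)$ by continuation.

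For iii), the paper raises (\ref{c3}) to the relevant powers and reads the result as an upper bound for $\frac{d}{dT}\big[-C^{-1}\exp\{-C\int_t^T\Phi\,d\tau\}\big]$. It integrates over $[t,t_0]$ and lets $t_0\nearrow T^*$ using ii). You use a Bihari-type comparison for $g(T)\le g(t)+C\int_t^T g(1+G(g))\,d\tau$ together with i). That works, but the step beyond your one-power model should be written out. Since $rG'(r)\le p_{\max}G(r)$, with $p_{\max}$ the largest of the three exponents, the substitution $w=G(r)$ gives $\int_y^\infty\frac{dr}{r(1+G(r))}\ge\frac{1}{p_{\max}}\log\big(1+G(y)^{-1}\big)$. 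This yields iii) with constant $p_{\max}C$.

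One caution about your product paragraph. The split $|\xi|^{s+1}\le C(|\xi-\eta|^{s+1}+|\eta|^{s+1})$ gives $\|u\|_{\mathcal{X}^{0}_{a,\sigma}}\|u\|_{\mathcal{X}^{s+1}_{a,\sigma}}$, not $\|u\|_{\mathcal{X}^{s}_{a,\sigma}}\|u\|_{\mathcal{X}^{1}_{a,\sigma}}$. An $\mathcal{X}^s\times\mathcal{X}^1$ split applied to $u\theta$ would need $s\ge1-2\beta$. That excludes part of the range $s>q_{\alpha,\beta}$ (e.g.\ $\alpha=1$, $\beta=\frac35$) and changes the exponents. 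So you really must use (\ref{wp1}) and (\ref{wp3}), with Lemma \ref{lema1} ii), pointwise in time, which is what you ultimately say you will do.
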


The blow-up criteria for the local solution obtained in Theorem \ref{teoremaexistenciaB} can be  enunciated as follows.

\begin{theorem}\label{teoremaB1}
Assume that $\alpha>\frac{1}{2}$, $\beta>\frac{1}{2}$,
$(a,s,\sigma)\in\{(0,\infty)\times [-1,0)\times (1,\infty)\}\cup \{[0,\infty)\times\{0\}\times [1,\infty)\}$ and  $(u_0,\theta_0)\in \mathcal{X}_{a,\sigma}^s(\mathbb{R}^3)$.
Consider that $(u,\theta)\in C([0,T^*),\mathcal{X}_{a,\sigma}^{s}(\mathbb{R}^3))$
is  a maximal solution for the Boussinesq equations \emph{(\ref{MHD-alpha})} obtained in Theorem \emph{\ref{teoremaexistenciaB}}. If $T^*<\infty$, then
\begin{enumerate}
	\item[\emph{i)}] $\displaystyle \limsup_{t\nearrow T^*} \|(u,\theta)(t)\|_{\mathcal{X}_{a,\sigma}^{s}}=\infty$;
	\item[\emph{ii)}] $\displaystyle\int_t^{T^*}\big[\|(u,\theta)(\tau)\|_{\mathcal{X}_{a,\sigma}^{s}}^{\frac{2\alpha}{2\alpha-1}} + \|u(\tau)\|_{\mathcal{X}_{a,\sigma}^{s}}^{\frac{2\beta}{2\beta-1}}\big]\;d\tau=\infty$;
	\item[\emph{iii)}] $\|(u,\theta)(t)\|_{\mathcal{X}_{a,\sigma}^{s}}^{\frac{2\alpha}{2\alpha-1}} + \|(u,\theta)(t)\|_{\mathcal{X}_{a,\sigma}^{s}}^{\frac{2\beta}{2\beta-1}}\geq [e^{C(T^*-t)}-1]^{-1}$,
\end{enumerate}
for all $t\in[0,T^*)$, where $C$ is a positive constant.
\end{theorem}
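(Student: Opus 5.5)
The plan is to follow the standard continuation scheme from the existence proofs, working in the two regimes of Theorem \ref{teoremaexistenciaB}. In each regime the bilinear estimates already give a polynomial-in-$|\xi|$ gain. In the Gevrey regime $a>0$, $\sigma>1$, $s\in[-1,0)$, the negative power $|\xi|^s$ is absorbed by spending part of the exponential weight, i.e. $|\xi|^{-s}\lesssim \delta^{s}e^{\delta|\xi|^{1/\sigma}}$ with a shifted parameter $a-\delta$. In the case $s=0$ the Wiener algebra property of $\mathcal{X}^0_{a,\sigma}$ is used directly.

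\textbf{Step i).} Argue by contradiction. Suppose $\|(u,\theta)(t)\|_{\mathcal{X}^s_{a,\sigma}}\le M$ on $[0,T^*)$. Because $\alpha,\beta>\frac12$, the smallness condition is not needed, and the existence time $T$ in Theorem \ref{teoremaexistenciaB} depends only on an upper bound for the data norm. Restarting at $T^*-T/2$ then extends the solution past $T^*$, contradicting maximality. This requires checking from the proof of Theorem \ref{teoremaexistenciaB} that $T=T(M)>0$ is uniform, which in the Gevrey case must also hold for the fixed $a$.

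\textbf{Step iii).} This gives the lower bound. I would derive the differential inequality directly on Fourier variables. Multiply the Fourier transformed equations by $|\xi|^se^{a|\xi|^{1/\sigma}}\,\overline{\widehat u}/|\widehat u|$, integrate, and use the convolution inequality $e^{a|\xi|^{1/\sigma}}\le e^{a|\xi-\eta|^{1/\sigma}}e^{a|\eta|^{1/\sigma}}$. This should give
\[
\frac{d}{dt}\|(u,\theta)\|_{\mathcal{X}^s_{a,\sigma}}+\|u\|_{\mathcal{X}^{s+2\alpha}_{a,\sigma}}+\|\theta\|_{\mathcal{X}^{s+2\beta}_{a,\sigma}}\le C\|\theta\|_{\mathcal{X}^s_{a,\sigma}}+C\|u\|_{\mathcal{X}^{s}_{a,\sigma}}\big(\|u\|_{\mathcal{X}^{s+1}_{a,\sigma}}+\|\theta\|_{\mathcal{X}^{s+1}_{a,\sigma}}\big).
\]
The bilinear estimates are the ones from the existence proof, with the Gevrey weight used as above when $s<0$.

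Next, interpolate $\|u\|_{\mathcal{X}^{s+1}}\le \|u\|_{\mathcal{X}^s}^{1-\frac1{2\alpha}}\|u\|_{\mathcal{X}^{s+2\alpha}}^{\frac1{2\alpha}}$, and similarly for $\theta$ with $\beta$. Young's inequality then absorbs the dissipative norms and leaves
\[
\frac{d}{dt}\|(u,\theta)\|\le C\big(\|(u,\theta)\|+\|(u,\theta)\|^{\frac{2\alpha}{2\alpha-1}+1}+\|(u,\theta)\|^{\frac{2\beta}{2\beta-1}+1}\big).
\]

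\textbf{Step ii).} Before Young's inequality is applied, the right-hand side reads $C(\|\theta\|+\|(u,\theta)\|^{\frac{2\alpha}{2\alpha-1}}\|(u,\theta)\|+\|u\|^{\frac{2\beta}{2\beta-1}}\|\theta\|)$. Gronwall's inequality gives $\|(u,\theta)(t)\|\le \|(u,\theta)(t_0)\|\exp\big(C\int_{t_0}^t[1+\|(u,\theta)\|^{\frac{2\alpha}{2\alpha-1}}+\|u\|^{\frac{2\beta}{2\beta-1}}]\big)$. If the integral in ii) were finite, the norm would stay bounded, contradicting i). This explains why only $u$ appears in the $\beta$ term: that term comes from $u\cdot\nabla\theta$, which is linear in $\theta$.

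\textbf{Rate in iii).} Set $\Phi(t)=\|(u,\theta)\|^{\frac{2\alpha}{2\alpha-1}}+\|(u,\theta)\|^{\frac{2\beta}{2\beta-1}}$. From the inequality above, I expect $\Phi'\le C(\Phi+\Phi^2)$. Integrating $\frac{d\Phi}{\Phi(1+\Phi)}$ from $t$ to $T^*$, and using $\Phi\to\infty$ along a sequence (by i)), yields $\log(1+\Phi(t)^{-1})\le C(T^*-t)$. This is exactly $\Phi(t)\ge[e^{C(T^*-t)}-1]^{-1}$. The power bookkeeping may require replacing $\Phi$ by a comparable quantity, and a regularization argument is needed to justify differentiating the norm of a mild solution.

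The main obstacle is the Gevrey case $s<0$. There the product estimate in $\mathcal{X}^s_{a,\sigma}$ loses the power $|\xi|^{-s}$ at low frequency, which the exponential weight cannot control. The first thing I would try is to split $|\eta|\le1$ and $|\eta|>1$, bounding the low frequencies by the $\mathcal{X}^s$ norm and the high frequencies via $e^{\delta|\xi|^{1/\sigma}}$, exactly as in the proof of Theorem \ref{teoremaexistenciaB}.
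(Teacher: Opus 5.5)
Your proposal is correct. Step ii) is the paper's argument: the Fourier-side energy inequality, the product bounds (\ref{wt2}) and (\ref{wp2}), Lemma \ref{lema1} i), Young, Gronwall as in (\ref{c3pn}), and then a contradiction with i). Parts i) and iii), however, follow different routes.

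For i), the paper does not use a uniform existence time. Assuming the norm stays bounded, it first derives the dissipation bound (\ref{ls5wp}). It then shows that $(u,\theta)(\kappa_n)$ is Cauchy in $\mathcal{X}^s_{a,\sigma}$ by estimating the Duhamel pieces $I_1,I_2,I_3$, and restarts from the limit at $T^*$. You instead observe that, for $\alpha,\beta>\frac12$, the admissible time in (\ref{p13}) depends only on an upper bound for the data norm, so one can restart at $T^*-T(M)/2$. This is much shorter and needs neither (\ref{ls5wp}) nor the Cauchy estimates. The paper's route gives in addition the convergence of $(u,\theta)(t)$ in $\mathcal{X}^s_{a,\sigma}$ as $t\nearrow T^*$, and it only uses local existence for a single datum.

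For iii), set $p=\frac{2\alpha}{2\alpha-1}$, $q=\frac{2\beta}{2\beta-1}$ and $g=\|(u,\theta)\|^{p}_{\mathcal{X}^s_{a,\sigma}}+\|u\|^{q}_{\mathcal{X}^s_{a,\sigma}}$. The paper rewrites the Gronwall bound as $\frac{d}{dT}\bigl[-C^{-1}e^{-C\int_t^T g\,d\tau}\bigr]\le e^{C(T-t)}\Phi(t)$, integrates over $[t,t_0]$, and invokes ii). You instead compare $\Phi$ with an ODE. With $y=\|(u,\theta)\|_{\mathcal{X}^s_{a,\sigma}}$, the inequality $y'\le Cy(1+y^p+y^q)$ gives $\Phi'\le C\max\{p,q\}\,\Phi(1+\Phi)$ with no further bookkeeping. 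Integrating $\frac{d}{d\tau}\log\frac{\Phi}{1+\Phi}\le C$ up to times where $\Phi\to\infty$ then uses only i). This is cleaner, but it needs a pointwise-in-time inequality, whereas the paper only proves integrated ones (via the $\sqrt{|\widehat u|^2+\varepsilon}$ regularization). Run your comparison with the upper Dini derivative obtained from $y(T)\le y(t)+C\int_t^T y(1+y^p+y^q)\,d\tau$ and the continuity of $y$.

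There are two points to correct.

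First, your displayed differential inequality omits the term $C\|\theta\|_{\mathcal{X}^s_{a,\sigma}}\|u\|_{\mathcal{X}^{s+1}_{a,\sigma}}$, which comes from $\|\theta u\|_{\mathcal{X}^{s+1}_{a,\sigma}}$ in Lemma \ref{lema2}. This is harmless: after Young it gives $\|\theta\|^{p}_{\mathcal{X}^s_{a,\sigma}}\|u\|_{\mathcal{X}^s_{a,\sigma}}$, which your Step ii) expression already contains.

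Second, the ``main obstacle'' is misdiagnosed. For $s<0$ there is no loss at low frequency, because $|\xi|^s\ge 1$ for $|\xi|\le 1$; the issue is at high frequency. The plain inequality $e^{a|\xi|^{1/\sigma}}\le e^{a|\xi-\eta|^{1/\sigma}}e^{a|\eta|^{1/\sigma}}$ would \emph{not} work there. It places one factor in $\mathcal{X}^0_{a,\sigma}$, which is not controlled by $\mathcal{X}^s_{a,\sigma}$, since no exponential gap remains to absorb $|\eta|^{-s}$. What makes $s<0$ work is Lemma \ref{lema2}, where the lower-frequency factor carries only $e^{\frac{a}{\sigma}|\eta|^{1/\sigma}}$ (from $(x+y)^{1/\sigma}\le x^{1/\sigma}+\frac{1}{\sigma}y^{1/\sigma}$ for $0\le y\le x$). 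This is combined with Lemma \ref{lemaprincipal}, which spends the gap $a-\frac{a}{\sigma}>0$. These are exactly the bounds (\ref{wt2}) and (\ref{wp2}) of the existence proof, so you can simply reuse them; the blow-up argument meets no new obstacle there.
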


If we assume that $a>0$ and $\sigma>1$ (the strict Lei-Lin-Gevrey spaces) in the subcritical case $\alpha>\frac{1}{2} $ and $\beta>\frac{1}{2}$ of the Boussinesq equations (\ref{MHD-alpha}), we are able to prove an exponential blow-up criterion for the local solution obtained in Theorem \ref{teoremaexistenciaB} (when $\beta=\alpha$). First of all, it is necessary to establish an inductive process in order to present new blow-up criteria related to some usual Lei-Lin spaces. More precisely, it is important to point out that the criteria presented in Corollary \ref{corollaryB1} below are not present in the literature even in the particular case of the Navier--Stokes equations, for instance (see \cite{wilberclay26,Nati,Nata,coriolis} and references included). Thus, we are ready to enunciate the following blow-up criteria for the local solution given in Theorem \ref{teoremaexistenciaB}.

\begin{corollary}\label{corollaryB1}
Assume that $a>0$, $\sigma>1$, $\alpha>\frac{1}{2}$, $\beta>\frac{1}{2}$, $s\in[-1,0]$  and  $(u_0,\theta_0)\in \mathcal{X}_{a,\sigma}^s(\mathbb{R}^3)$.
Consider that $(u,\theta)\in C([0,T^*),\mathcal{X}_{a,\sigma}^{s}(\mathbb{R}^3))$
is  a maximal solution for the Boussinesq equations \emph{(\ref{MHD-alpha})} obtained in Theorem \emph{\ref{teoremaexistenciaB}}. If $T^*<\infty$, then
\begin{enumerate}
	\item[\emph{i)}] $\displaystyle\int_t^{T^*}\big[\|(u,\theta)(\tau)\|_{\mathcal{X}_{\frac{a}{\sigma(\sqrt{\sigma})^{n-1}},\sigma}^{0}}^{\frac{2\alpha}{2\alpha-1}} + \|u(\tau)\|_{\mathcal{X}_{\frac{a}{\sigma(\sqrt{\sigma})^{n-1}},\sigma}^{0}}^{\frac{2\beta}{2\beta-1}}\big]\;d\tau=\infty$;
	\item[\emph{ii)}] $\|(u,\theta)(t)\|_{\mathcal{X}_{\frac{a}{\sigma(\sqrt{\sigma})^{n-1}},\sigma}^{0}}^{\frac{2\alpha}{2\alpha-1}} + \|(u,\theta)(t)\|_{\mathcal{X}_{\frac{a}{\sigma(\sqrt{\sigma})^{n-1}},\sigma}^{0}}^{\frac{2\beta}{2\beta-1}}\geq [e^{C(T^*-t)}-1]^{-1}$,
	\item[\emph{iii)}] $\|(u,\theta)(t)\|_{\mathcal{X}_{\frac{a}{(\sqrt{\sigma})^{n}},\sigma}^{s}}^{\frac{2\alpha}{2\alpha-1}} + \|(u,\theta)(t)\|_{\mathcal{X}_{\frac{a}{(\sqrt{\sigma})^{n}},\sigma}^{s}}^{\frac{2\beta}{2\beta-1}}\gtrsim [e^{C(T^*-t)}-1]^{-1}$;
	\item[\emph{iv)}] $\displaystyle \limsup_{t\nearrow T^*} \|(u,\theta)(t)\|_{\mathcal{X}_{\frac{a}{(\sqrt{\sigma})^{n-1}},\sigma}^{s}}=\infty$;
\item[\emph{v)}] $\|(u,\theta)(t)\|_{\mathcal{X}^0}^{\frac{2\alpha}{2\alpha-1}}+ \|(u,\theta)(t)\|_{\mathcal{X}^0}^{\frac{2\beta}{2\beta-1}}\geq [e^{C(T^*-t)}-1]^{-1}$;
\item [\emph{vi)}] $\|(u,\theta)(t)\|_{\mathcal{X}^s}^{\frac{2\alpha}{2\alpha-1}}+ \|(u,\theta)(t)\|_{\mathcal{X}^s}^{\frac{2\beta}{2\beta-1}}\gtrsim [e^{C(T^*-t)}-1]^{-1}$,
\end{enumerate}
for all $t\in[0,T^*)$ and $n\in \mathbb{N}$, where $C$ denotes positive constants.
\end{corollary}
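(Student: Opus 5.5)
The plan is to reduce everything to Theorem \ref{teoremaB1} with $s=0$ and a smaller Gevrey parameter, using the embeddings between Lei-Lin-Gevrey spaces. The key is a two-sided comparison.

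\emph{Upward comparison.} For $a'<a$, $s\in[-1,0]$ and $\sigma>1$,
$$\|f\|_{\mathcal{X}^{0}_{a',\sigma}}\lesssim \|f\|_{\mathcal{X}^{s}_{a,\sigma}}.$$
This holds because $e^{(a'-a)|\xi|^{1/\sigma}}|\xi|^{-s}$ is bounded for $|\xi|\geq1$, while on $|\xi|\le1$ we have $|\xi|^{-s}\le1$ times bounded exponentials.

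\emph{Downward comparison.} Trivially $\|f\|_{\mathcal{X}^s_{a',\sigma}}\le\|f\|_{\mathcal{X}^s_{a,\sigma}}$ when $a'\le a$. Also, for $s\le0$, $\|f\|_{\mathcal{X}^s_{a'',\sigma}}\lesssim \|f\|_{\mathcal{X}^0_{a',\sigma}}$ fails at low frequencies, so I will instead argue as follows. The solution from Theorem \ref{teoremaexistenciaB} with data in $\mathcal{X}^s_{a,\sigma}$ is, by uniqueness and the embedding $\mathcal{X}^s_{a,\sigma}\subset\mathcal{X}^s_{a',\sigma}$, also the maximal solution in $\mathcal{X}^s_{a',\sigma}$, and in $\mathcal{X}^0_{a',\sigma}$ for $a'<a$.

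This uniqueness step needs care. Since $(u,\theta)\in L^1_T(\mathcal{X}^{s+2\alpha}_{a,\sigma})$, the solution lies in $\mathcal{X}^0_{a',\sigma}$ for a.e.\ positive time. I will restart the $\mathcal{X}^0_{a',\sigma}$ theory (the case $s=0$, $a'\geq0$, $\sigma\ge1$ of Theorem \ref{teoremaexistenciaB}) from such a time. Comparing maximal times shows that the maximal times coincide, or at least that the $\mathcal{X}^0_{a',\sigma}$ existence time is no smaller than $T^*$ and not larger, because control in $\mathcal{X}^0_{a',\sigma}$ implies control in $\mathcal{X}^s_{a'',\sigma}$ for $a''<a'$ via the upward comparison for high frequencies plus propagation at low frequencies.

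\emph{Induction on $n$.} Set $a_n=a/(\sqrt\sigma)^{n}$, so that $a/(\sigma(\sqrt\sigma)^{n-1})=a_{n+1}$, which is less than $a_n$. Theorem \ref{teoremaB1} applied with $(a_{n+1},0,\sigma)$ and the same maximal time $T^*$ gives items i), ii) directly. Item iii) follows from ii) and the upward comparison $\|\cdot\|_{\mathcal{X}^0_{a_{n+1},\sigma}}\lesssim\|\cdot\|_{\mathcal{X}^s_{a_n,\sigma}}$. Item iv) follows from Theorem \ref{teoremaB1} i) applied in $\mathcal{X}^s_{a_{n-1},\sigma}$, using the identification of maximal times, which is where the induction is used: the time $T^*$ propagates from level $a_{n-1}$ to level $a_n$. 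Items v), vi) are the case $a=0$ of Theorem \ref{teoremaB1} (allowed since $s=0$, $a=0$, $\sigma\ge1$ is admissible), again once the maximal time in $\mathcal{X}^0$ is identified with $T^*$. For vi), this combines with $\|f\|_{\mathcal{X}^0}$ being controlled by a Gevrey norm, which I will obtain via parabolic smoothing.

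\emph{Main obstacle.} The identification of the maximal existence time across the different spaces is the hardest part. In particular, one must show that blow-up in $\mathcal{X}^s_{a,\sigma}$ at $T^*$ forces blow-up in the weaker norms at the same $T^*$ rather than later. I would try to prove that boundedness of the $\mathcal{X}^0_{a_{n+1},\sigma}$ norm up to $T^*$ implies boundedness of the $\mathcal{X}^s_{a_n,\sigma}$ norm. The approach is a Gevrey-regularity bootstrap: the dissipation and the Gevrey weight yield an a priori estimate raising the exponent from $a_{n+1}$ to $a_n$ over a time interval, and the loss factor $\sqrt\sigma$ absorbs the inequality $|\xi|^{1/\sigma}$ versus the dissipation. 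This would contradict Theorem \ref{teoremaB1} i).
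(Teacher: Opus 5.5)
Your reduction rests entirely on an identification you do not prove: that the maximal existence time of the solution viewed in $\mathcal{X}^0_{a_{n+1},\sigma}$ (and, for v), in $\mathcal{X}^0$) equals $T^*$. The embeddings give only $T^*_{\mathrm{weak}}\ge T^*$. The substance is the converse: a bound on the weak norm up to $T^*$ must keep $\|(u,\theta)\|_{\mathcal{X}^s_{a,\sigma}}$ bounded, which would contradict Theorem \ref{teoremaB1} i).

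What you sketch for this goes the wrong way. Control in $\mathcal{X}^0_{a',\sigma}$ giving control in $\mathcal{X}^s_{a'',\sigma}$ with $a''<a'$ says nothing about the radius-$a$ norm. The proposed dissipative bootstrap that ``raises the radius'' is not carried out, and it is not what is needed: no radius has to be gained, only propagated.

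The missing ingredient is the tame product estimate of Lemma \ref{lema2}. It measures one factor only in the smaller-radius norm $\mathcal{X}^0_{\frac a\sigma,\sigma}$; the loss $\frac a\sigma$ reflects $|\xi|^{1/\sigma}\le|\xi-\eta|^{1/\sigma}+\frac1\sigma|\eta|^{1/\sigma}$ for $|\eta|\le|\xi-\eta|$. Combined with Lemma \ref{lema1} i), Young and Gronwall, it gives the Beale--Kato--Majda type bound
\[
\|(u,\theta)(T)\|_{\mathcal{X}^s_{a,\sigma}}\le e^{C(T-t)}\|(u,\theta)(t)\|_{\mathcal{X}^s_{a,\sigma}}\exp\Big\{C\int_t^T\big[\|(u,\theta)\|_{\mathcal{X}^0_{\frac a\sigma,\sigma}}^{\frac{2\alpha}{2\alpha-1}}+\|u\|_{\mathcal{X}^0_{\frac a\sigma,\sigma}}^{\frac{2\beta}{2\beta-1}}\big]d\tau\Big\}.
\]
Together with Theorem \ref{teoremaB1} i), this yields i) for $n=1$ at once.

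At $s=0$ and radius $\frac a\sigma$, the same bound closes on the $\mathcal{X}^0_{\frac a\sigma,\sigma}$ norm itself. Integrating the resulting differential inequality and using i) gives ii). Item iii) then follows exactly as you say, from Lemma \ref{lema0} with $\mu=\sqrt\sigma$. The $\sqrt\sigma$ is only the splitting $\sigma=\sqrt\sigma\cdot\sqrt\sigma$, needed because that lemma requires a strict loss of radius to absorb $|\xi|^{-s}$; it is not a dissipation-versus-Gevrey balance. Finally, iii) at level $n$ forces iv) at level $n+1$, hence $T^*_{a/(\sqrt\sigma)^n}=T^*_a$, and this drives the induction. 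So the equality of maximal times is a consequence of the lower bounds, not an input to them.

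Items v) and vi) have further problems.

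For v), applying Theorem \ref{teoremaB1} with $a=0$ again presupposes the unproved identification of the maximal time in $\mathcal{X}^0$. The paper instead lets $n\to\infty$ in ii). This works because every constant in the $s=0$ estimate is independent of the radius, and $\|f\|_{\mathcal{X}^0_{b,\sigma}}\searrow\|f\|_{\mathcal{X}^0}$ as $b\searrow0$ by dominated convergence.

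For vi), your plan cannot work as written. A lower bound on $\|\cdot\|_{\mathcal{X}^s}$ requires the known quantity to be bounded above by $\|\cdot\|_{\mathcal{X}^s}$. But $\|f\|_{\mathcal{X}^0}\lesssim\|f\|_{\mathcal{X}^s}$ fails at high frequencies when $s<0$, and $(0,s,\sigma)$ with $s<0$ is not admissible in Theorem \ref{teoremaB1}. The paper passes to the limit $n\to\infty$ in iii). Be aware, though, that the optimal constant of Lemma \ref{lema0} at radius $a/(\sqrt\sigma)^n$ is of order $(a/(\sqrt\sigma)^n)^{s\sigma}$, which blows up for $s<0$. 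That limit is therefore not uniform either, and vi) with $s<0$ needs an additional argument there as well.
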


Still assuming $a>0$ and $\sigma>1$, and supposing $\beta=\alpha>\frac{1}{2}$, our last main result is related to the exponential blow-up criterion for the local solution obtained in Theorem \ref{teoremaexistenciaB}. This result should be read as a quantitative refinement of the preceding criteria: not only must a relevant norm become unbounded when $T^*<\infty$, but in the strict Lei-Lin-Gevrey setting its lower growth contains an exponential component inherited from the Gevrey weight.

\begin{corollary}\label{corollaryB2}
	Assume that $a>0$, $\sigma> 1$, $\beta=\alpha>\frac{1}{2}$, $s\in[-1,0]$ and $(u_0,\theta_0)\in \mathcal{X}_{a,\sigma}^s(\mathbb{R}^3)\cap L^2(\mathbb{R}^3)$. Consider that
$(u,\theta)\in C([0,T^*),\mathcal{X}_{a,\sigma}^{s}(\mathbb{R}^3))$ is  the maximal solution for the Boussinesq equations \emph{(\ref{MHD-alpha})} obtained in Theorem \emph{\ref{teoremaexistenciaB}}. If $T^*<\infty$, then
\begin{align*}
\|(u,\theta)(t)\|_{\mathcal{X}_{a,\sigma}^{s}}&\gtrsim a^{n_0}
		[e^{C_1(T^*-t)}-1]^{-\frac{2\alpha-1}{2\alpha}(\frac{2s}{3}+\frac{2n_0}{\sigma}+1)}
		\|(u_0,\theta_0)\|_{L^2(\mathbb{R}^3)}^{-\frac{2}{3}(s+\frac{n_0}{\sigma})}\\
& \,\,\,\,\,\times \exp\Big\{aC_2[e^{C_3(T^*-t)}-1]^{-\frac{2\alpha-1}{3\alpha\sigma}}\|(u_0,\theta_0)\|_{L^2(\mathbb{R}^3)}^{-\frac{2}{3\sigma}}\Big\},
\end{align*}
for all $t\in[0,T^*)$, where $C_i$ \emph{(}$i=1,2,3$\emph{)} is a positive constant, $\sigma_0$ is the integer part of $-s\sigma$ and   $n_0>\sigma_0$ is a natural number.
\end{corollary}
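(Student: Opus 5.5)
The argument combines a lower bound for the $\mathcal{X}^0$ norm (Corollary \ref{corollaryB1} v)) with an interpolation that converts $L^2$ information into Gevrey growth. The $L^2$ control comes from the energy equality. When $\beta=\alpha$, the transport structure and incompressibility give, for $\theta$, $\|\theta(t)\|_{L^2}\le\|\theta_0\|_{L^2}$. For $u$ we get
$$\tfrac{d}{dt}\|u\|_{L^2}\le\|\theta\|_{L^2},$$
hence $\|u(t)\|_{L^2}\le\|u_0\|_{L^2}+t\|\theta_0\|_{L^2}$. Since $T^*<\infty$, this yields $\|(u,\theta)(t)\|_{L^2}\lesssim\|(u_0,\theta_0)\|_{L^2}$ on $[0,T^*)$, with a constant depending on $T^*$. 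The justification uses the $L^1_T(\mathcal{X}^{s+2\alpha}_{a,\sigma})$ regularity from Theorem \ref{teoremaexistenciaB}, so that the mild solution is smooth for $t>0$. With $\beta=\alpha$, Corollary \ref{corollaryB1} v) reduces to
$$\|(u,\theta)(t)\|_{\mathcal{X}^0}\gtrsim [e^{C(T^*-t)}-1]^{-\frac{2\alpha-1}{2\alpha}}=:\Lambda(t).$$

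The key step is the interpolation. Fix $R>0$ and split
$$\|f\|_{\mathcal{X}^0}=\int_{|\xi|\le R}|\widehat f|+\int_{|\xi|>R}|\widehat f|.$$
By Cauchy--Schwarz and Plancherel, the low-frequency part is at most $CR^{3/2}\|f\|_{L^2}$. For the high-frequency part, write $1=|\xi|^{-s}e^{-a|\xi|^{1/\sigma}}\cdot|\xi|^se^{a|\xi|^{1/\sigma}}$. To make the prefactor decreasing with good constants, expand the exponential and keep only the $n_0$-th term:
$$e^{a|\xi|^{1/\sigma}}\ge \frac{a^{n_0}|\xi|^{n_0/\sigma}}{n_0!}.$$
Using this in only part of the weight, the high-frequency part is bounded by
$$\frac{n_0!}{a^{n_0}}\,R^{-s-n_0/\sigma}\,e^{-\frac a2R^{1/\sigma}}\|f\|_{\mathcal{X}^s_{a,\sigma}}\quad\text{(up to halving } a\text{, absorbed into constants).}$$
The condition $n_0>\sigma_0=\lfloor -s\sigma\rfloor$ guarantees $-s-n_0/\sigma<0$, so the power of $R$ decays.

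Now choose $R$ so that the low-frequency part is at most half of $\|f\|_{\mathcal{X}^0}$: take
$$R=c\,\Lambda(t)^{2/3}\|(u_0,\theta_0)\|_{L^2}^{-2/3},$$
using $\|f\|_{L^2}\lesssim\|(u_0,\theta_0)\|_{L^2}$. This gives
$$\Lambda(t)\lesssim a^{-n_0}R^{-s-n_0/\sigma}e^{-cR^{1/\sigma}}\|(u,\theta)(t)\|_{\mathcal{X}^s_{a,\sigma}}.$$
Solving for the norm and substituting $R$ produces the stated bound. The polynomial factor has exponent
$$-\tfrac{2\alpha-1}{2\alpha}\Big(\tfrac{2s}{3}+\tfrac{2n_0}{3\sigma}+1\Big)$$
on $e^{C(T^*-t)}-1$, together with the power $\|(u_0,\theta_0)\|_{L^2}^{-\frac23(s+\frac{n_0}{\sigma})}$. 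The exponential factor is
$$\exp\big\{aC_2\,[e^{C_3(T^*-t)}-1]^{-\frac{2\alpha-1}{3\alpha\sigma}}\|(u_0,\theta_0)\|_{L^2}^{-\frac{2}{3\sigma}}\big\},$$
since $R^{1/\sigma}=c\,\Lambda^{\frac{2}{3\sigma}}\|\cdot\|_{L^2}^{-\frac{2}{3\sigma}}$. The stated exponent $\frac{2n_0}{\sigma}$ in place of $\frac{2n_0}{3\sigma}$ would follow from a cruder bound or from the authors' bookkeeping; I would simply track the constants carefully.

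I expect the main obstacle to be the rigorous justification of the $L^2$ energy bound for the mild solution, which requires showing that it is regular enough to test against itself. I would handle this by using the Gevrey smoothing of the Lei-Lin-Gevrey solution for positive times, together with continuity of the $L^2$ norm at $t=0$ via the mild formulation. A secondary point is that the split only works if $R\ge1$ (or a similar threshold), which requires $\Lambda(t)$ large, i.e. $t$ close to $T^*$. For the remaining times, the left side is bounded below by a positive constant on compact subintervals, so the inequality can be absorbed into the implicit constant.
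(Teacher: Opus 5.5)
Your proof is correct, but at the key step it takes a different route from the paper. Both arguments start from the same two ingredients:
\begin{itemize}
\item Corollary \ref{corollaryB1} v) with $\beta=\alpha$, which gives your lower bound $\Lambda(t)$ for the $\mathcal{X}^0$ norm;
\item an $L^2$ energy bound. The paper uses the cruder Gronwall bound $\|(u,\theta)(t)\|_{L^2}\le e^{T^*}\|(u_0,\theta_0)\|_{L^2}$, which serves the same purpose as yours.
\end{itemize}

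\textbf{The paper's route.} The paper applies Lemma \ref{lema} once for every $\delta=s+\frac{k}{\sigma}$ with $k\ge n_0$; there, $n_0>\sigma_0$ is needed only to make $\delta>0$. This gives lower bounds for $\|(u,\theta)(t)\|_{\mathcal{X}^{s+k/\sigma}}$ that are geometric in $k$. It multiplies these by $\frac{a^k}{k!}$ and sums, using $\sum_k\frac{a^k}{k!}\|f\|_{\mathcal{X}^{s+k/\sigma}}=\|f\|_{\mathcal{X}^s_{a,\sigma}}$. Finally it bounds the tail $\sum_{k\ge n_0}\frac{x^k}{k!}\ge Cx^{n_0}e^{x/2}$ via the auxiliary function $f(x)$.

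\textbf{Your route.} You make a single frequency cut at $R\sim(\Lambda/\|(u_0,\theta_0)\|_{L^2})^{2/3}$ and build the Gevrey weight directly into the high-frequency estimate. Here $n_0>\sigma_0$ instead makes $r\mapsto r^{-s-n_0/\sigma}e^{-\frac a2 r^{1/\sigma}}$ decreasing. In effect you rerun the proof of Lemma \ref{lema} with the Gevrey weight inside and optimize the cut once. The paper instead optimizes over $\delta$ implicitly, through the Taylor series of $e^{a|\xi|^{1/\sigma}}$.

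\textbf{Comparison.} Your version is shorter and keeps every constant explicit. It also avoids having to control how the constant of Lemma \ref{lema} depends on $\delta$ across infinitely many applications. The paper's version is modular, using the Lei-Lin interpolation inequality as a black box. Both give the same exponential factor, with the same halving of $a$.

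\textbf{Three side remarks.}
\begin{itemize}
\item No threshold such as $R\ge 1$ is needed. The bound $\int_{|\xi|\le R}|\widehat f|\,d\xi\le CR^{3/2}\|f\|_{L^2}$ holds for every $R>0$, and the weight above is decreasing on all of $(0,\infty)$. So your inequality holds for every $t\in[0,T^*)$ with no case distinction.
\item Your exponent $\frac{2n_0}{3\sigma}$ is also what the paper's own computation produces. Its factor $x^{n_0}$, with $x$ proportional to $\Lambda^{\frac{2}{3\sigma}}$, contributes exactly $\Lambda^{\frac{2n_0}{3\sigma}}$. So the $\frac{2n_0}{\sigma}$ in the statement appears to be a misprint, not something you are missing.
\item The paper carries out the $L^2$ energy estimate formally, as you do. If you justify it, the real issue is persistence of $L^2$ at low frequencies, not smoothness, since $\mathcal{X}^s_{a,\sigma}(\mathbb{R}^3)$ does not embed into $L^2(\mathbb{R}^3)$. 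On each $[0,T]$ with $T<T^*$ this can be obtained from the Duhamel formula in $L^2$, using $\|u\|_{L^\infty}\lesssim\|u\|_{\mathcal{X}^0}\lesssim\|u\|_{\mathcal{X}^s_{a,\sigma}}$ (Lemma \ref{lemanovo2}) and the integrability of $(t-\tau)^{-\frac{1}{2\alpha}}$.
\end{itemize}
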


\begin{remark}
It is worth to point out that this paper establishes the proof of Corollary \ref{corollaryB1} i) and ii), with $n=1$, for $\alpha>\frac{1}{2}$, $\beta>\frac{1}{2}$ and $(a,s,\sigma)\in\{(0,\infty)\times [-1,0)\times (1,\infty)\}\cup \{[0,\infty)\times\{0\}\times [1,\infty)\}$ or $a\geq0$, $\sigma \geq1,$
$ s> \max\{1-2\alpha,\tfrac{\alpha(1-2\beta)}{\beta},\tfrac{\beta(1-2\alpha)}{\alpha}\}$ and $ s\in [-1,0]$ as well. Furthermore, the norms presented in Corollary \ref{corollaryB1} v) and vi) are finite because of Lemma \ref{lemanovo2} below. In addition, it is important to emphasize that Theorem 1.3 in \cite{Nata} is contained in Theorem \ref{teoremaB2} if $\alpha,\beta\leq 1$, by disregarding  the difference in the mathematical  nature of the terms of the Boussinesq and MHD equations;  however, Theorem \ref{teoremaB2} also presents blow-up criteria in the case $\alpha,\beta>1$. On the other hand, Corollary \ref{corollaryB1} is the main novelty of our blow-up criteria (see \cite{wilberclay26,Nati,Nata,coriolis} and references therein). Let us inform that this result is  relevant to obtain our exponential blow-up criterion (see Corollary \ref{corollaryB2} for more details), which is established by \cite{Nata} (for $\beta=\alpha\in(\frac{1}{2},1]$) in the MHD case, as well as this one gives, for instance, a lower bound for our local solution in Lei-Lin spaces $\mathcal{X}^s(\mathbb{R}^3)$, with $s\in[-1,0]$, $\alpha>\frac{1}{2} $ and $\beta>\frac{1}{2}$.
\end{remark}

\bigskip

\noindent\textbf{Organization of the paper:} This paper is organized has follow: Section 2 contains the notation, the definition of Lei-Lin-Gevrey spaces and the auxiliary estimates used throughout the text. Section 3 is devoted to the local existence theory for \eqref{MHD-alpha}, including the fixed point argument and the proof of Theorems \ref{teoremaexistenciaB2} and \ref{teoremaexistenciaB}. Section 4 proves the blow-up criteria for maximal mild solutions presented in Theorems \ref{teoremaB2} and \ref{teoremaB1} and Corollaries \ref{corollaryB1} and \ref{corollaryB2}.

\section{Notations and auxiliary results}

This section is devoted to the presentation of the preliminary notations and results which will be crucial in our arguments (see \cite{pontofixo,Nati,Nata,artigowilberthyagomanasses} for more details).  

\bigskip

\noindent\textbf{Main notations throughout this paper:} As follows, we establish the most relevant notations and definitions that are used throughout our paper.

\begin{enumerate}

\item The functions dependencies are presented in three forms. For example, we denote
$$u=u(t)=u(x,t),\quad x\in \mathbb{R}^3,t\geq0.$$
    
\item $S'(\mathbb{R}^3)$ is the space of tempered distributions.

\item The Fourier transform 
is defined by
 $$\mathcal{F}(f)(\xi)= \widehat{f}(\xi):=\int_{\mathbb{R}^3}e^{-i\xi\cdot x}f(x)\,dx.$$


\item  The tensor product is given by
$$f\otimes g:=(g_{1}f,g_{2}f,g_{3}f),$$
where $f=(f_1,f_2,f_3)$ and $g=(g_1,g_2,g_3)\in S'(\mathbb{R}^3).$

  \item Let $T>0,$  ($X,\|\cdot\|_{X}$)  a normed space and $I\subseteq \mathbb{R}$  an interval. We define
  $$C(I;X)=\{f:I\rightarrow X \hbox{ continuous function}\}, $$
  and  the $C(I;X)$-norm is given by
$$\|f\|_{L^{\infty}(I;X)}:=\sup_{t\in I}\{\|f(t)\|_{X}\}.$$
We denote $C_T(X)=C([0,T];X)$ and $\|\cdot\|_{L_T^{\infty}(X)}=\|\cdot\|_{L^{\infty}([0,T];X)}.$
  \item Let $1\leq p<\infty$, $T>0,$  ($X,\|\cdot\|_{X}$) a normed space and $I\subseteq \mathbb{R}$ an interval. We define
  $$L^p(I;X)=\{f:I\rightarrow X \hbox{ measurable function}: \int_I\|f(t)\|_{X}^p\,dt<\infty\},$$ and the $L^{p}(I;X)$-norm is given by
$$\|f\|_{L^{p}(I;X)}:=\Big(\int_I\|f(t)\|_{X}^p\,dt\Big)^{\frac{1}{p}}.$$
We denote $L_T^{p}(X)=L^{p}([0,T];X)$.

\item The constants in this paper may change their values from line to line without change of notation.
For example, $C_{q}$   denotes any constant that depends on $q$.
\end{enumerate}


\bigskip

\noindent\textbf{Preliminary lemmas:} Below, we present some lemmas that will play an important role in the proofs of our main results. Let us begin with the ones which will be useful in the search for  a mild solution of the Boussinesq equations (\ref{MHD-alpha}) (see \cite{pontofixo,Nati,Nata,artigowilberthyagomanasses} for more information).

\begin{lemma}[see \cite{pontofixo}]\label{lemaB}
		Let $(X,\|\cdot\|)$ be a Banach space, $L:X\rightarrow X$ a continuous linear operator and
 $B:X\times X\rightarrow X$ a continuous bilinear operator, i.e., there exist positive constants $C_1$ and $C_2$ such that
 $$\|L(x)\|\leq C_1 \|x\|,\quad \|B(x,y)\|\leq C_2 \|x\|\|y\|,\quad \forall x,y\in X.$$
 Then, for each $C_1\in (0,1)$ and $x_0\in X$ that satisfy $4C_2\|x_0\| <(1-C_1)^2$, one has that the equation 
 $$x=x_0+B(x,x)+L(x), \quad x\in X,$$
 admits a solution $x\in X$. Moreover, $x$ satisfies the inequality $\|x\|\leq \frac{2\|x_0\|}{1-C_1}$ and this solution is unique among those such that $\|x\|<  \frac{1-C_1}{2C_2}.$
	\end{lemma}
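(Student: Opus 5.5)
The plan is to prove Lemma \ref{lemaB} with the contraction mapping principle on a closed ball. Define $\Phi(x):=x_0+B(x,x)+L(x)$ and take the closed ball $\overline{B}_R=\{x\in X:\|x\|\leq R\}$ with a radius $R$ to be chosen. Since $X$ is Banach, $\overline{B}_R$ is a complete metric space. It then suffices to show that $\Phi$ maps $\overline{B}_R$ into itself and is a strict contraction there.

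\textbf{Self-mapping.} For $\|x\|\leq R$,
$$\|\Phi(x)\|\leq \|x_0\|+C_2R^2+C_1R.$$
So we need $C_2R^2-(1-C_1)R+\|x_0\|\leq 0$. The roots of this quadratic are
$$R_\pm=\frac{(1-C_1)\pm\sqrt{(1-C_1)^2-4C_2\|x_0\|}}{2C_2},$$
and they are real and distinct precisely because $4C_2\|x_0\|<(1-C_1)^2$. Any $R\in[R_-,R_+]$ works. I would choose $R=R_-$, or any $R$ slightly above $R_-$ but strictly below $\frac{1-C_1}{2C_2}$.

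Rationalizing gives
$$R_-=\frac{2\|x_0\|}{(1-C_1)+\sqrt{(1-C_1)^2-4C_2\|x_0\|}}\leq \frac{2\|x_0\|}{1-C_1}.$$
This yields the bound $\|x\|\leq \frac{2\|x_0\|}{1-C_1}$ stated in the lemma.

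\textbf{Contraction.} By bilinearity, $B(x,x)-B(y,y)=B(x-y,x)+B(y,x-y)$. Hence
$$\|\Phi(x)-\Phi(y)\|\leq \big(C_2(\|x\|+\|y\|)+C_1\big)\|x-y\|\leq (2C_2R+C_1)\|x-y\|.$$
The Lipschitz constant satisfies $2C_2R+C_1<1$ exactly when $R<\frac{1-C_1}{2C_2}$. This holds for $R=R_-$, since $R_-<\frac{1-C_1}{2C_2}$ because the discriminant is strictly positive. Banach's fixed point theorem then gives a fixed point $x\in\overline{B}_{R_-}$.

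\textbf{Uniqueness.} This is the step needing the most care, because uniqueness is claimed in the larger open ball $\|x\|<\frac{1-C_1}{2C_2}$ rather than only in $\overline{B}_{R_-}$. Let $y$ be another solution with $\|y\|<\frac{1-C_1}{2C_2}$, and let $x$ be the constructed one. Subtracting the two equations and using the same bilinear splitting gives
$$\|x-y\|\leq \big(C_2(\|x\|+\|y\|)+C_1\big)\|x-y\|.$$
Since $\|x\|\leq R_-<\frac{1-C_1}{2C_2}$ and $\|y\|<\frac{1-C_1}{2C_2}$, the coefficient satisfies $C_2(\|x\|+\|y\|)+C_1<(1-C_1)+C_1=1$. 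Therefore $x=y$.
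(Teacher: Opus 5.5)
Your proof is correct and complete. The closed ball of radius $R_-=\frac{2\|x_0\|}{(1-C_1)+\sqrt{(1-C_1)^2-4C_2\|x_0\|}}\le\frac{2\|x_0\|}{1-C_1}$ is mapped into itself by $\Phi$. The Lipschitz constant $2C_2R_-+C_1$ is strictly below $1$. Your separate estimate $\|x-y\|\le (C_2(\|x\|+\|y\|)+C_1)\|x-y\|$ correctly extends uniqueness to the larger open ball $\|y\|<\frac{1-C_1}{2C_2}$. The degenerate case $x_0=0$, where the ball is $\{0\}$, is also covered.

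The paper gives no proof of this lemma; it only quotes it from \cite{pontofixo}. Your contraction-mapping argument is the standard proof of this kind of statement, so there is no alternative route in the paper to compare against.
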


\begin{lemma}[see \cite{Nati}]\label{lemacalor}
Assume that $a\geq0,\sigma\geq1,$ $T>0$, $s\in \mathbb{R}$, $\gamma \in \R$, $f\in L^1_T(\mathcal{X}_{a,\sigma}^{s}(\mathbb{R}^3))$ and $v_0\in \mathcal{X}_{a,\sigma}^{s}(\mathbb{R}^3)$. Consider that $v\in C_T(S'(\mathbb{R}^3))$ solves the system
\begin{align*}
\left\{
  \begin{array}{ll}
    v_t+(-\Delta)^{\gamma} v = f,\quad x\in \mathbb{R}^3,t\in(0,T];\\
    v(\cdot,0)=v_0,\quad x\in \mathbb{R}^3.
  \end{array}
\right.
\end{align*}
Then,  $$v\in C_T(\mathcal{X}_{a,\sigma}^{s}(\mathbb{R}^3))\cap L^p_T(\mathcal{X}_{a,\sigma}^{s+\frac{2\gamma}{p}}(\mathbb{R}^3)),\quad \forall p\geq1.$$ Furthermore, one can write the following inequalities:
\begin{enumerate}
  \item[\emph{i)}] $\|v\|_{L^\infty_T(\mathcal{X}_{a,\sigma}^{s})}\leq \|v_0\|_{\mathcal{X}_{a,\sigma}^{s}}+\|f\|_{L^1_T(\mathcal{X}_{a,\sigma}^{s})};$
  \item[\emph{ii)}] $\displaystyle \|v\|_{L^p_T(\mathcal{X}_{a,\sigma}^{s+\frac{2\gamma}{p}})}\leq       \|v_0\|_{\mathcal{X}_{a,\sigma}^{s}}+\|f\|_{L^1_T(\mathcal{X}_{a,\sigma}^{s})}.$
\end{enumerate}
\end{lemma}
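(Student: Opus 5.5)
The statement to prove is Lemma \ref{lemacalor}. The plan is to work entirely on the Fourier side, where the equation decouples frequency by frequency. Since $v\in C_T(S'(\mathbb{R}^3))$ solves the linear problem, its Fourier transform satisfies the Duhamel formula
$$\widehat v(\xi,t)=e^{-t|\xi|^{2\gamma}}\widehat v_0(\xi)+\int_0^t e^{-(t-\tau)|\xi|^{2\gamma}}\widehat f(\xi,\tau)\,d\tau .$$
Uniqueness of the solution in $S'$ is standard. One solves the ODE in $t$ for each fixed $\xi$ after pairing with test functions, which justifies this representation. Multiplying by the weight $|\xi|^s e^{a|\xi|^{1/\sigma}}$ and using $e^{-t|\xi|^{2\gamma}}\le 1$, item i) follows from integrating in $\xi$ and taking the supremum in $t$:
$$\|v(t)\|_{\mathcal{X}_{a,\sigma}^{s}}\le \|v_0\|_{\mathcal{X}_{a,\sigma}^{s}}+\int_0^t\|f(\tau)\|_{\mathcal{X}_{a,\sigma}^{s}}d\tau .$$

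\textbf{Item ii).} I would first treat the endpoint $p=1$, then interpolate. For $p=1$, integrate $|\xi|^{s+2\gamma}e^{a|\xi|^{1/\sigma}}|\widehat v(\xi,t)|$ over $t\in[0,T]$, using Tonelli.

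The initial-data term is bounded using
$$\int_0^T|\xi|^{2\gamma}e^{-t|\xi|^{2\gamma}}dt=1-e^{-T|\xi|^{2\gamma}}\le1.$$
For the forcing term, exchange the $t$ and $\tau$ integrals. This gives
$$\int_\tau^T|\xi|^{2\gamma}e^{-(t-\tau)|\xi|^{2\gamma}}dt\le1,$$
so the forcing term is at most $\|f\|_{L^1_T(\mathcal{X}_{a,\sigma}^s)}$. Hence $\|v\|_{L^1_T(\mathcal{X}^{s+2\gamma}_{a,\sigma})}\le \|v_0\|_{\mathcal{X}_{a,\sigma}^{s}}+\|f\|_{L^1_T(\mathcal{X}_{a,\sigma}^{s})}$.

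For general $p\ge1$, apply Hölder pointwise in $t$:
$$|\xi|^{s+2\gamma/p}w=(|\xi|^{s}w)^{1-1/p}(|\xi|^{s+2\gamma}w)^{1/p},\qquad w=e^{a|\xi|^{1/\sigma}}|\widehat v|.$$
This yields
$$\|v(t)\|_{\mathcal{X}^{s+2\gamma/p}_{a,\sigma}}\le \|v(t)\|_{\mathcal{X}^{s}_{a,\sigma}}^{1-1/p}\|v(t)\|_{\mathcal{X}^{s+2\gamma}_{a,\sigma}}^{1/p}.$$
Taking the $L^p_T$ norm gives
$$\|v\|_{L^p_T(\mathcal{X}^{s+2\gamma/p}_{a,\sigma})}\le \|v\|_{L^\infty_T(\mathcal{X}^s_{a,\sigma})}^{1-1/p}\,\|v\|_{L^1_T(\mathcal{X}^{s+2\gamma}_{a,\sigma})}^{1/p},$$
and both factors are bounded by the same quantity $\|v_0\|_{\mathcal{X}_{a,\sigma}^{s}}+\|f\|_{L^1_T(\mathcal{X}_{a,\sigma}^{s})}$. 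The case $\gamma<0$ is allowed by the statement; there the factor $e^{-t|\xi|^{2\gamma}}$ is still at most $1$, but the time integrals are bounded differently. I would restrict attention to $\gamma>0$, which is the only case used in the paper, and note that $\gamma=0$ gives i) trivially. Since $|\xi|^{2\gamma}\ge0$, the computation above is valid whenever $\gamma\geq 0$.

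\textbf{Continuity.} The main delicate point is showing $v\in C_T(\mathcal{X}^s_{a,\sigma})$. For $t_n\to t$, the difference $\widehat v(\xi,t_n)-\widehat v(\xi,t)$ is dominated, after weighting, by
$$|\xi|^se^{a|\xi|^{1/\sigma}}\Big(2|\widehat v_0(\xi)|+\int_0^T|\widehat f(\xi,\tau)|\,d\tau\Big),$$
which is integrable by the hypotheses on $v_0$ and $f$. It converges to zero pointwise in $\xi$ by continuity of the exponential and of the truncated integral in its upper limit. Dominated convergence then gives continuity in $\mathcal{X}^s_{a,\sigma}$, completing the proof.
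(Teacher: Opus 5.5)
Your proof is correct and is the standard frequency-by-frequency argument. The paper imports this lemma from \cite{Nati} without proof, but its own computation (\ref{novo2}) is exactly your initial-data bound. Its derivation of (\ref{ls3}) gets the combined $L^\infty_T$/$L^1_T$ estimate by integrating $\partial_t|\widehat v|+|\xi|^{2\gamma}|\widehat v|\le|\widehat f|$ in time, which is equivalent to your Duhamel bound followed by Tonelli. Your H\"older interpolation for general $p$ is the same device as Lemma \ref{lema1} i).

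One correction: restricting to $\gamma\ge0$ is unnecessary, and your remark that the time integrals must be bounded differently for $\gamma<0$ is mistaken. Set $\lambda=|\xi|^{2\gamma}$, which is positive for every $\xi\neq0$ whatever the sign of $\gamma$. Then $\int_0^T\lambda e^{-t\lambda}\,dt=1-e^{-T\lambda}\le1$ and $\int_\tau^T\lambda e^{-(t-\tau)\lambda}\,dt\le1$ hold verbatim, so your argument already covers all $\gamma\in\mathbb{R}$.
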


\begin{lemma}[see \cite{Nata}]\label{lema2}
	Assume that $f$, $g\in \mathcal{X}_{a,\sigma}^{s+1}(\mathbb{R}^3)\cap\mathcal{X}_{\frac{a}{\sigma},\sigma}^{0}(\mathbb{R}^3)$, with $a\geq 0$, $\sigma\geq 1$ and $s\geq-1$. Then $fg\in \mathcal{X}_{a,\sigma}^{s+1}(\mathbb{R}^3)$. Moreover, the inequality below holds:
\begin{align*}
\|fg\|_{\mathcal{X}^{s+1}_{a,\sigma}}\leq C_s[\|f\|_{\mathcal{X}^{0}_{\frac{a}{\sigma},\sigma}}\|g\|_{\mathcal{X}^{s+1}_{a,\sigma}}+\|f\|_{\mathcal{X}^{s+1}_{a,\sigma}(\mathbb{R}^3)}\|g\|_{\mathcal{X}^{0}_{\frac{a}{\sigma},\sigma}}].
\end{align*}
\end{lemma}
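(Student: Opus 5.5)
The statement immediately above is Lemma \ref{lema2}. I would prove it on the Fourier side, using that $\widehat{fg}=(2\pi)^{-3}\widehat f*\widehat g$. This gives
$$
\|fg\|_{\mathcal{X}^{s+1}_{a,\sigma}}\le C\int\!\!\int |\xi|^{s+1}e^{a|\xi|^{1/\sigma}}|\widehat f(\xi-\eta)||\widehat g(\eta)|\,d\eta\,d\xi .
$$
Set $k:=s+1\ge0$. The first tool is the elementary inequality $|\xi|^{k}\le C_k(|\xi-\eta|^{k}+|\eta|^{k})$. By symmetry, it then suffices to treat the region $|\xi-\eta|\le|\eta|$, where the polynomial weight can be placed on $\eta$ (the factor carried by $g$). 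Of the two terms in the claim, this region should produce the term $\|f\|_{\mathcal{X}^{0}_{\frac a\sigma,\sigma}}\|g\|_{\mathcal{X}^{s+1}_{a,\sigma}}$.

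The key step is a refined splitting of the Gevrey weight. The plain subadditivity $|\xi|^{1/\sigma}\le|\xi-\eta|^{1/\sigma}+|\eta|^{1/\sigma}$ would leave the full weight $e^{a|\xi-\eta|^{1/\sigma}}$ on $f$, which is too much. In the region $|\xi-\eta|\le|\eta|$ I would instead show
$$
|\xi|^{1/\sigma}\le |\eta|^{1/\sigma}+\tfrac1\sigma|\xi-\eta|^{1/\sigma}.
$$
This follows from the mean value theorem applied to $r\mapsto r^{1/\sigma}$:
$$
|\xi|^{1/\sigma}-|\eta|^{1/\sigma}\le \tfrac1\sigma\,\rho^{1/\sigma-1}\,|\xi-\eta|
$$
for some $\rho$ between $|\eta|$ and $|\xi|$. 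When $|\xi|\le|\eta|$ the claim is trivial. Otherwise $\rho\ge|\eta|\ge|\xi-\eta|$, and since $1/\sigma-1\le0$ we get $\rho^{1/\sigma-1}|\xi-\eta|\le|\xi-\eta|^{1/\sigma}$. The same argument in the complementary region, with the roles of $f$ and $g$ exchanged, gives the weight $e^{a|\xi-\eta|^{1/\sigma}}e^{\frac a\sigma|\eta|^{1/\sigma}}$.

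Inserting these weight bounds and applying Fubini gives two contributions. In the region $|\xi-\eta|\le|\eta|$, the bound $|\xi|^{k}\le 2^{k}|\eta|^{k}$ yields
$$
\int|\widehat f(\zeta)|e^{\frac a\sigma|\zeta|^{1/\sigma}}d\zeta\int|\eta|^{s+1}e^{a|\eta|^{1/\sigma}}|\widehat g(\eta)|d\eta .
$$
The complementary region is handled symmetrically and gives the other term. This also shows that $\widehat{fg}$ is locally integrable, hence $fg\in\mathcal{X}^{s+1}_{a,\sigma}(\mathbb{R}^3)$.

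The main obstacle is the Gevrey weight inequality with the gain factor $1/\sigma$ on the smaller frequency. The mean value argument above handles it, provided $\sigma\ge1$ (so that $r^{1/\sigma-1}$ is nonincreasing) and provided the frequencies are ordered before the weights are distributed. Once that is in place, the rest is Young's inequality in $L^1$.
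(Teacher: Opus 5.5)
Your proof is correct. The Fourier-side convolution bound, the split according to whether $|\xi-\eta|\le|\eta|$ or not, and the mean-value inequality $|\xi|^{1/\sigma}\le|\eta|^{1/\sigma}+\frac{1}{\sigma}|\xi-\eta|^{1/\sigma}$ on the region where $\xi-\eta$ is the smaller frequency together give the estimate; with $|\xi|^{s+1}\le 2^{s+1}|\eta|^{s+1}$ there (valid since $s+1\ge 0$), you obtain both terms with $C_s=(2\pi)^{-3}2^{s+1}$. The paper gives no proof of Lemma \ref{lema2} and simply cites \cite{Nata}; your argument is the standard route to this kind of Lei-Lin-Gevrey product estimate, so nothing is missing.
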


\begin{lemma}[see \cite{Nata}]\label{lema1}
Let $a\geq 0$, $\gamma\geq\frac{1}{2}$ and $\sigma\geq 1$. The following inequalities hold:
\begin{enumerate}
	\item[\emph{i)}] $\|f\|_{\mathcal{X}^{s+1}_{a,\sigma}}\leq\|f\|_{\mathcal{X}^{s}_{a,\sigma}}^{1-\frac{1}{2\gamma}}\|f\|_{\mathcal{X}^{s+2\gamma}_{a,\sigma}}^{\frac{1}{2\gamma}},$ if  $s\in \mathbb{R}$;
	\item[\emph{ii)}] $\|f\|_{\mathcal{X}^{0}_{a,\sigma}}\leq\|f\|_{\mathcal{X}^{s}_{a,\sigma}}^{1+\frac{s}{2\gamma}}\|f\|_{\mathcal{X}^{s+2\gamma}_{a,\sigma}}^{-\frac{s}{2\gamma}}$, if $-2\gamma\leq s\leq 0$.
\end{enumerate}
\end{lemma}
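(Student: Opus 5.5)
Both inequalities are pointwise interpolations in frequency space, and I will prove them by Hölder's inequality applied to the weighted $L^1$ integrals that define the norms. The Gevrey factor is not an obstruction: write $e^{a|\xi|^{1/\sigma}}=\big(e^{a|\xi|^{1/\sigma}}\big)^{\theta}\big(e^{a|\xi|^{1/\sigma}}\big)^{1-\theta}$ and $|\widehat f(\xi)|=|\widehat f(\xi)|^{\theta}|\widehat f(\xi)|^{1-\theta}$ for the same exponent $\theta\in[0,1]$. Then only the polynomial weights $|\xi|^{s}$ need to be matched.

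\textbf{Part i).} I take $\theta=1-\frac{1}{2\gamma}$, which lies in $[0,1)$ because $\gamma\geq\frac12$. The identity
$$|\xi|^{s+1}=\big(|\xi|^{s}\big)^{1-\frac{1}{2\gamma}}\big(|\xi|^{s+2\gamma}\big)^{\frac{1}{2\gamma}}$$
holds since $s(1-\frac{1}{2\gamma})+(s+2\gamma)\frac{1}{2\gamma}=s+1$. Multiplying the splittings together gives
$$|\xi|^{s+1}e^{a|\xi|^{1/\sigma}}|\widehat f|=\Big(|\xi|^{s}e^{a|\xi|^{1/\sigma}}|\widehat f|\Big)^{1-\frac{1}{2\gamma}}\Big(|\xi|^{s+2\gamma}e^{a|\xi|^{1/\sigma}}|\widehat f|\Big)^{\frac{1}{2\gamma}}.$$
I then integrate over $\mathbb{R}^3$ and apply Hölder's inequality with conjugate exponents $p=\frac{2\gamma}{2\gamma-1}$ and $p'=2\gamma$, which yields i) immediately. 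In the endpoint $\gamma=\frac12$ one has $s+1=s+2\gamma$, so i) is trivially an equality and Hölder is not needed.

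\textbf{Part ii).} Here I need exponents $\theta,1-\theta$ with $s\theta+(s+2\gamma)(1-\theta)=0$. This gives $1-\theta=-\frac{s}{2\gamma}$ and $\theta=1+\frac{s}{2\gamma}$. The hypothesis $-2\gamma\leq s\leq0$ is exactly what makes $\theta\in[0,1]$. The same splitting and Hölder's inequality with exponents $\frac{1}{\theta}$ and $\frac{1}{1-\theta}$ then give ii). The degenerate cases $\theta=0$ (that is, $s=-2\gamma$) and $\theta=1$ (that is, $s=0$) are trivial identities.

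\textbf{Finiteness and main point.} If the right-hand side of either inequality is finite, Hölder's inequality shows the left-hand side is finite as well. If the right-hand side is infinite, there is nothing to prove, with the convention $0\cdot\infty$ handled trivially. The only step requiring care is choosing the exponent so that the polynomial weights combine correctly and checking that it lies in $[0,1]$. This is precisely where the assumptions $\gamma\geq\frac12$ and $-2\gamma\leq s\leq0$ enter.
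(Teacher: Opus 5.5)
Your proof is correct. You split the weight and $|\widehat f|$ with the exponent $\theta=1-\frac{1}{2\gamma}$ in part i) and $\theta=1+\frac{s}{2\gamma}$ in part ii), then apply H\"older's inequality with exponents $\frac{1}{\theta}$ and $\frac{1}{1-\theta}$; this is the standard interpolation argument for these Lei-Lin-Gevrey inequalities. The paper itself gives no proof and simply quotes the lemma from the literature, so your route is essentially the intended one, and you handle the degenerate cases ($\gamma=\frac12$, $s=0$, $s=-2\gamma$) correctly.
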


\begin{lemma}[see \cite{artigowilberthyagomanasses}]\label{lemaprincipal}
	Let $a,\sigma$ and $s$ be real numbers such that $(a,s,\sigma)\in \{(0,\infty)$ $\times(-\infty,0)\times (1,\infty) \}\cup \{[0,\infty)\times\{0\}\times [1,\infty) \}$. Assume that $f\in \mathcal{X}_{a,\sigma}^{s}(\mathbb{R}^3)$. Then, $f\in \mathcal{X}_{\frac{a}{\sigma},\sigma}^{0}(\mathbb{R}^3)$. Moreover, there exists a positive constant $C_{a,s,\sigma}$ such that
	\begin{align*}
	\|f\|_{\mathcal{X}_{\frac{a}{\sigma},\sigma}^{0}}\leq C_{a,s,\sigma}\|f\|_{\mathcal{X}_{a,\sigma}^{s}}.
	\end{align*}
\end{lemma}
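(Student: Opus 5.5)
The estimate $\|f\|_{\mathcal{X}^0_{a/\sigma,\sigma}}\le C_{a,s,\sigma}\|f\|_{\mathcal{X}^s_{a,\sigma}}$ reduces to a pointwise comparison of Fourier weights. Since
$$\|f\|_{\mathcal{X}^0_{a/\sigma,\sigma}}=\int_{\mathbb{R}^3}e^{\frac{a}{\sigma}|\xi|^{1/\sigma}}|\widehat f(\xi)|\,d\xi ,$$
it suffices to find a constant $C_{a,s,\sigma}$ such that
$$e^{\frac{a}{\sigma}|\xi|^{1/\sigma}}\le C_{a,s,\sigma}\,|\xi|^{s}e^{a|\xi|^{1/\sigma}}\quad\text{for a.e. }\xi\in\mathbb{R}^3 .$$
Multiplying by $|\widehat f(\xi)|$ and integrating then gives the inequality, and in particular membership of $f$ in $\mathcal{X}^0_{a/\sigma,\sigma}(\mathbb{R}^3)$. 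Since $\widehat f\in L^1_{\rm loc}$ is already given, nothing else needs checking.

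\textbf{Case $s=0$, $a\ge0$, $\sigma\ge1$.} Here $\frac{a}{\sigma}\le a$, so $e^{\frac{a}{\sigma}|\xi|^{1/\sigma}}\le e^{a|\xi|^{1/\sigma}}$ and we may take $C=1$.

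\textbf{Case $a>0$, $s<0$, $\sigma>1$.} The required inequality is equivalent to
$$|\xi|^{-s}\le C\,e^{a(1-\frac1\sigma)|\xi|^{1/\sigma}} .$$
Put $r=|\xi|^{1/\sigma}\ge0$ and $b=a(1-\frac1\sigma)>0$; this is where $a>0$ and $\sigma>1$ are used. Writing $m=-s\sigma>0$, the condition becomes
$$\sup_{r\ge0} r^{m}e^{-br}<\infty .$$
The function $g(r)=r^m e^{-br}$ is continuous on $[0,\infty)$, vanishes at $0$ and tends to $0$ at infinity. Elementary calculus gives its maximum at $r=m/b$, so
$$C_{a,s,\sigma}=\Big(\frac{-s\sigma}{ea(1-1/\sigma)}\Big)^{-s\sigma}.$$
Near $\xi=0$ there is no problem, since $|\xi|^{s}\to\infty$ only helps.

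There is no genuine obstacle here. The only point requiring care is that a positive exponential gap, with $b>0$, is exactly what absorbs the polynomial growth $|\xi|^{-s}$ at high frequencies. This is why the borderline cases $a=0$ or $\sigma=1$ are excluded when $s<0$. In those cases $e^{\frac{a}{\sigma}|\xi|^{1/\sigma}}$ with $\frac{a}{\sigma}=a$ cannot be bounded by $|\xi|^s e^{a|\xi|^{1/\sigma}}$ at large $|\xi|$.
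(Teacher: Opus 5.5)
Your proof is correct: the pointwise bound $e^{\frac{a}{\sigma}|\xi|^{1/\sigma}}\le C_{a,s,\sigma}\,|\xi|^{s}e^{a|\xi|^{1/\sigma}}$ is all that is needed. It holds with $C=1$ when $s=0$, and when $s<0$ with $C=\big(\frac{-s\sigma}{ea(1-1/\sigma)}\big)^{-s\sigma}$, which is the exact supremum of $r^{-s\sigma}e^{-a(1-1/\sigma)r}$. The paper gives no proof of its own for this lemma (it is only quoted from an earlier work), and your weight comparison, where the gap $e^{a(1-1/\sigma)|\xi|^{1/\sigma}}$ absorbs $|\xi|^{-s}$, is the standard argument behind it.
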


Now, allow us to present the lemmas which will be useful in the proofs of our blow-up criteria for the local solutions of Boussinesq equations (\ref{MHD-alpha}) (for more details, see \cite{Nata}).

\begin{lemma}[see \cite{Nata}]\label{lema0}
	Let $a>0$, $\sigma\geq 1$, $\mu>1$, $s\leq0$, $\delta\in\mathbb{R}$ and $f\in \mathcal{X}_{a,\sigma}^{s+\delta}(\mathbb{R}^3)$. Then, $f\in \mathcal{X}_{\frac{a}{\mu},\sigma}^{\delta}(\mathbb{R}^3)$. Moreover, there exists a positive constant $C_{a,s,\delta,\sigma,\mu}$ such that
	\begin{align*}
	\|f\|_{\mathcal{X}_{\frac{a}{\mu},\sigma}^{\delta}}\leq C_{a,s,\delta,\sigma,\mu}\|f\|_{\mathcal{X}_{a,\sigma}^{s+\delta}}.
	\end{align*}
\end{lemma}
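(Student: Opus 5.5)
We need to prove Lemma \ref{lema0}: for $a>0$, $\sigma\ge1$, $\mu>1$, $s\le0$, $\delta\in\mathbb{R}$, the bound $\|f\|_{\mathcal{X}^{\delta}_{a/\mu,\sigma}}\le C\|f\|_{\mathcal{X}^{s+\delta}_{a,\sigma}}$. The plan is a pointwise comparison of the Fourier weights, followed by integration.

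Write the integrand of the left-hand side as
$$|\xi|^{\delta}e^{\frac{a}{\mu}|\xi|^{1/\sigma}}|\widehat f(\xi)| = \Big[|\xi|^{-s}e^{-a(1-\frac1\mu)|\xi|^{1/\sigma}}\Big]\,|\xi|^{s+\delta}e^{a|\xi|^{1/\sigma}}|\widehat f(\xi)|.$$
It then suffices to show that
$$g(\xi):=|\xi|^{-s}e^{-a(1-\frac1\mu)|\xi|^{1/\sigma}}$$
is bounded on $\mathbb{R}^3\setminus\{0\}$. Once that is known, integrating over $\xi$ gives the inequality with $C_{a,s,\delta,\sigma,\mu}=\sup g$. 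Membership $f\in\mathcal{X}^{\delta}_{a/\mu,\sigma}(\mathbb{R}^3)$ follows at once, since $\widehat f\in L^1_{\operatorname{loc}}$ is part of the hypothesis.

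To bound $g$, set $r=|\xi|^{1/\sigma}\ge0$ and $b=a(1-1/\mu)>0$; here $\mu>1$ and $a>0$ are exactly what make $b$ positive. Then
$$g=r^{-s\sigma}e^{-br}.$$
Since $-s\sigma\ge0$, there are two cases.

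\emph{Case $s=0$.} Then $g\le1$.

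\emph{Case $s<0$.} The function $r^{k}e^{-br}$ with $k=-s\sigma>0$ tends to $0$ at $r=0$ and at $r=\infty$. It attains its maximum at $r=k/b$, so
$$\sup g=\Big(\frac{k}{eb}\Big)^{k} = \Big(\frac{-s\sigma}{e\,a(1-1/\mu)}\Big)^{-s\sigma}.$$
This constant depends only on $a,s,\sigma,\mu$.

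No real obstacle is expected. The only point to watch is that $b>0$ requires $a>0$ and $\mu>1$; this is why the lemma excludes the pure Lei--Lin case $a=0$ when $s<0$, where no such gain is possible.
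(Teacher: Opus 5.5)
Your proof is correct and complete. The weight factorization reduces the lemma to $\sup_{r>0} r^{-s\sigma}e^{-a(1-\frac{1}{\mu})r}$, which equals $\big(\frac{-s\sigma}{e\,a(1-1/\mu)}\big)^{-s\sigma}$ for $s<0$ and $1$ for $s=0$, so the constant is even independent of $\delta$. The paper gives no proof of its own for this lemma (it cites \cite{Nata}), and your pointwise comparison of the Fourier weights is the natural argument, with nothing missing.
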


\begin{lemma}[see \cite{Nata}]\label{lema}
	Let $\delta>0$ such that $f\in L^2(\mathbb{R}^3)\cap \mathcal{X}^\delta(\mathbb{R}^3)$. Then, $f\in \mathcal{X}^0(\mathbb{R}^3).$ Moreover, the following inequality holds:
	\begin{align*}
	\|f\|_{\mathcal{X}^0}\leq C_0 \|f\|_{L^2}^{\frac{2\delta}{2\delta+3}}\|f\|_{\mathcal{X}^\delta}^{\frac{3}{2\delta+3}}.
	\end{align*}
\end{lemma}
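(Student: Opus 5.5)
The plan is a standard low/high frequency splitting of $\widehat f$ at a radius $R>0$, followed by an optimization in $R$. If $f=0$ there is nothing to prove, so I assume $\|f\|_{L^2}>0$ and $\|f\|_{\mathcal{X}^\delta}>0$ (if either vanishes, then $\widehat f=0$ a.e. and $f=0$). Then I write
$$
\|f\|_{\mathcal{X}^0}=\int_{|\xi|<R}|\widehat f(\xi)|\,d\xi+\int_{|\xi|\geq R}|\widehat f(\xi)|\,d\xi=:I_1+I_2 .
$$

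For the low-frequency part I apply the Cauchy--Schwarz inequality on the ball $B_R$ and then Plancherel's identity (with the paper's normalization of $\mathcal{F}$, $\|\widehat f\|_{L^2}=(2\pi)^{3/2}\|f\|_{L^2}$):
$$
I_1\leq |B_R|^{\frac12}\|\widehat f\|_{L^2}\leq C R^{\frac32}\|f\|_{L^2}.
$$
For the high-frequency part I use $1\leq R^{-\delta}|\xi|^{\delta}$ on $\{|\xi|\geq R\}$, which is valid since $\delta>0$:
$$
I_2\leq R^{-\delta}\int_{\mathbb{R}^3}|\xi|^\delta|\widehat f(\xi)|\,d\xi=R^{-\delta}\|f\|_{\mathcal{X}^\delta}.
$$
This already shows that $\widehat f\in L^1(\mathbb{R}^3)$, that is, $f\in\mathcal{X}^0(\mathbb{R}^3)$.

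Finally I balance the two bounds by choosing $R$ with $R^{\frac32+\delta}=\|f\|_{\mathcal{X}^\delta}/\|f\|_{L^2}$, so that $R=(\|f\|_{\mathcal{X}^\delta}/\|f\|_{L^2})^{\frac{2}{2\delta+3}}$. With this choice,
$$
R^{\frac32}\|f\|_{L^2}=R^{-\delta}\|f\|_{\mathcal{X}^\delta}=\|f\|_{L^2}^{\frac{2\delta}{2\delta+3}}\|f\|_{\mathcal{X}^\delta}^{\frac{3}{2\delta+3}}.
$$
Hence $\|f\|_{\mathcal{X}^0}\leq C_0\|f\|_{L^2}^{\frac{2\delta}{2\delta+3}}\|f\|_{\mathcal{X}^\delta}^{\frac{3}{2\delta+3}}$, where $C_0$ depends only on the volume of the unit ball and the Plancherel constant.

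None of the steps is a real obstacle. The only points needing care are that $\delta>0$ is exactly what makes the high-frequency bound work, and the exponent bookkeeping in the optimization; in particular the exponents sum to $1$, which reflects the scaling consistency of the inequality.
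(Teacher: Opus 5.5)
Your proof is correct, and it is essentially the standard argument behind this lemma, which the paper does not reprove but quotes from \cite{Nata}: split $\widehat f$ at a radius $R$, bound the low frequencies by Cauchy--Schwarz and Plancherel and the high frequencies via $1\le R^{-\delta}|\xi|^{\delta}$, then optimize in $R$. Your exponent bookkeeping, the choice $R=(\|f\|_{\mathcal{X}^\delta}/\|f\|_{L^2})^{\frac{2}{2\delta+3}}$, and the separate treatment of the degenerate case $f=0$ are all correct.
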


\begin{lemma}[see \cite{Nata}]\label{lemanovo2}
	Let  $a>0$, $\sigma\geq1$,  $s,\delta\in \mathbb{R}$ such that $s\leq\delta$ and $f\in \mathcal{X}_{a,\sigma}^{s}(\mathbb{R}^3)$. Then, $f\in\mathcal{X}^{\delta}(\mathbb{R}^3)$. Moreover, there is a positive constant $C_{a,s,\delta,\sigma}$ such that
	\begin{align*}
	\|f\|_{\mathcal{X}^{\delta}}\leq C_{a,s,\delta,\sigma}\|f\|_{\mathcal{X}_{a,\sigma}^{s}}.
	\end{align*}
\end{lemma}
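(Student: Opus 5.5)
The statement to prove is Lemma \ref{lemanovo2}: for $a>0$, $\sigma\geq1$ and $s\leq\delta$, one has $\|f\|_{\mathcal{X}^{\delta}}\leq C_{a,s,\delta,\sigma}\|f\|_{\mathcal{X}_{a,\sigma}^{s}}$. The plan is a direct pointwise comparison of the Fourier weights. I would write
$$
\|f\|_{\mathcal{X}^{\delta}}=\int_{\mathbb{R}^3}|\xi|^{\delta}|\widehat f(\xi)|\,d\xi=\int_{\mathbb{R}^3}\Big(|\xi|^{\delta-s}e^{-a|\xi|^{1/\sigma}}\Big)\,|\xi|^{s}e^{a|\xi|^{1/\sigma}}|\widehat f(\xi)|\,d\xi .
$$
The goal is then to show that the multiplier $m(\xi):=|\xi|^{\delta-s}e^{-a|\xi|^{1/\sigma}}$ is bounded on $\mathbb{R}^3\setminus\{0\}$. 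Once this is known, we get $\|f\|_{\mathcal{X}^\delta}\leq \|m\|_{L^\infty}\|f\|_{\mathcal{X}^s_{a,\sigma}}$, which is the claim with $C_{a,s,\delta,\sigma}=\sup m$. Membership $f\in\mathcal{X}^\delta(\mathbb{R}^3)$ follows at once, since $\widehat f\in L^1_{\rm loc}$ is already assumed.

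To bound $m$, set $r=|\xi|>0$ and $k=\delta-s\geq0$, and study $g(r)=r^{k}e^{-ar^{1/\sigma}}$. If $k=0$, then $g\leq1$ trivially because $a>0$. If $k>0$, $g$ is continuous on $(0,\infty)$ and tends to $0$ at $0$. It also tends to $0$ at infinity, because the exponential decay $e^{-ar^{1/\sigma}}$ beats any polynomial; this is where $a>0$ is essential. To get an explicit constant, substitute $\rho=r^{1/\sigma}$, which gives $g=\rho^{k\sigma}e^{-a\rho}$. Its maximum is attained at $\rho=k\sigma/a$, so
$$\sup g=\Big(\frac{k\sigma}{ae}\Big)^{k\sigma}.$$
Equivalently, one can use $x^{N}e^{-x}\leq N^{N}e^{-N}$ with $x=a\rho$ and $N=k\sigma$.

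There is no genuine obstacle. The only points needing care are the degenerate cases $k=0$ and $\xi=0$, a null set that does not affect the integral. Note also that the hypothesis $s\leq\delta$ is what makes $k\geq0$, so that $m$ stays bounded near the origin. If $\delta<s$, the factor $|\xi|^{\delta-s}$ would blow up at low frequencies, and that is precisely why the lemma is restricted to $s\leq \delta$.
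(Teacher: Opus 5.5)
Your proof is correct and complete. Bounding the multiplier $|\xi|^{\delta-s}e^{-a|\xi|^{1/\sigma}}$ by $\big(\tfrac{(\delta-s)\sigma}{ae}\big)^{(\delta-s)\sigma}$ gives the embedding with an explicit, in fact sharp, constant. The paper gives no proof of its own and only cites \cite{Nata}; this pointwise comparison of the Fourier weights is the natural argument for the embedding, so there is nothing to add.
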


\section{Local theory}\label{secaolocalB}



This section is devoted to establishing the proofs of Theorems \ref{teoremaexistenciaB2} and \ref{teoremaexistenciaB}. These results are related to the local existence and uniqueness of mild solutions to the Boussinesq equations (\ref{MHD-alpha}).

\bigskip
\noindent\textbf{Proof of Theorem \ref{teoremaexistenciaB2}:}
By using the mild solution representation given by \eqref{Mild SOlution Form}, we obtain
\begin{align}\label{estimativaprojecao3}
(u,\theta)(t)&= (e^{- t(-\Delta)^{\alpha}}u_0,e^{- t (-\Delta)^{\beta}}\theta_0) + B((u,\theta),(u,\theta))(t)+L(u,\theta)(t),\quad\forall t>0,
\end{align}
where
\begin{align*}
B((w,v),(\gamma,\phi))(t)=(B_1((w,v),(\gamma,\phi)),B_2((w,v),(\gamma,\phi)))(t)
\end{align*}
and also
\begin{align}\label{linearL}
L(w,v)(t)=(L_1(w,v)(t),L_2(w,v)(t)),
\end{align}
with
\begin{align}\label{bilinearB1}
B_1((w,v),(\gamma,\phi))(t)= - \int_{0}^te^{- (t-\tau)(-\Delta)^{\alpha}} \mathbb{P}(w\cdot\nabla \gamma)\,d\tau,
\end{align}
\begin{align}\label{bilinearB2}
B_2((w,v),(\gamma,\phi))(t)= - \int_{0}^te^{- (t-\tau)(-\Delta)^{\beta}} [w\cdot \nabla \phi]\,d\tau,
\end{align}
\begin{align}\label{linearL1eL2}
L_1(w,v)(t)= \int_{0}^te^{- (t-\tau)(-\Delta)^{\alpha}} \mathbb{P}[ve_3]\,d\tau \quad \hbox{    and    }\quad  L_2(w,v)(t)= 0,
\end{align}
for all $(w,v),(\gamma,\phi)\in X\times Y:= [C_T(\mathcal{X}_{a,\sigma}^s)\cap L_T(\mathcal{X}_{a,\sigma}^{s+2\alpha})]\times [C_T(\mathcal{X}_{a,\sigma}^s)\cap L_T(\mathcal{X}_{a,\sigma}^{s+2\beta})]\footnote{Here, $\|(f,g)\|_{X\times Y}:=\|f\|_X+\|g\|_Y,$ where $\|f\|_X=\|f\|_{L^\infty_T(\mathcal{X}_{a,\sigma}^s)}+\|f\|_{L^1_T(\mathcal{X}_{a,\sigma}^{s+2\alpha})}$, and $\|g\|_Y=\|g\|_{L^\infty_T(\mathcal{X}_{a,\sigma}^s)}+\|g\|_{L^1_T(\mathcal{X}_{a,\sigma}^{s+2\beta})}$}$ ($T>0$ will be chosen as follows), where $\hbox{div}\,w=\hbox{div}\,\gamma=0$ (for compatibility purposes with $u$ in (\ref{MHD-alpha})). It is easy to check  that $B$ and $L$ are  bilinear and linear operators on $[X\times Y]^2$ and $X\times Y$, respectively. 
Thus, in order to apply Lemma \ref{lemaB}, we need to prove that $L$ and $B$ are continuous operators.   

We are going to begin  studying the operator $L$. At first, notice that (\ref{linearL1eL2}) infers the system
\begin{align}\label{sistemaL}
\left\{
  \begin{array}{ll}
    \partial_t L_1(w,v)(t)+(-\Delta)^{\alpha} L_1(w,v)(t) = \mathbb{P}(ve_3)(t);\\
    L_1(w,v)(0)=0.
  \end{array}
\right.
\end{align}
and, moreover, one has
 \begin{align}\label{p3}
\nonumber\displaystyle\|\mathbb{P}(ve_3)\|_{L^1_T(\mathcal{X}_{a,\sigma}^{s})}&\leq  \int_0^T\int_{\mathbb{R}^3} |\xi|^{s}e^{a|\xi|^{\frac{1}{\sigma}}} |\mathcal{F}[\mathbb{P}( ve_3)](\xi)|\,d\xi\,dt\leq\int_0^T\int_{\mathbb{R}^3} |\xi|^{s} e^{a|\xi|^{\frac{1}{\sigma}}} |\widehat{v}(\xi)|\,d\xi\,dt\\
&\leq T \|v\|_{L^{\infty}_T(\mathcal{X}_{a,\sigma}^s)}\leq T \|v\|_{Y}.
\end{align}
By Lemma \ref{lemacalor} (with $p=1$) and (\ref{sistemaL}), it follows that
\begin{align}\label{wilber18}
\displaystyle\|L_1(w,v)\|_{X}& \leq 2T \|v\|_{Y},\quad\forall (w,v)\in X\times Y.
\end{align}
As a result,  (\ref{linearL}), (\ref{linearL1eL2}) and (\ref{wilber18}) imply that
\begin{align}\label{lwfinal}
\displaystyle\|L(w,v)\|_{X\times Y}\leq  2T \|(w,v)\|_{X\times Y},\quad\forall (w,v)\in X\times Y.
\end{align}
This leads us to observe that $L$ is a continuous operator. 

Now let us estimate the operator $B$. Notice that (\ref{bilinearB2}) allows us to write the next system:
\begin{align}\label{sistemaB2}
\left\{
  \begin{array}{ll}
    \partial_t B_2((w,v),(\gamma,\phi))(t) + (-\Delta)^{\beta} B_2((w,v),(\gamma,\phi))(t) = -w\cdot\nabla \phi(t); \\
    B_2((w,v),(\gamma,\phi))(0)=0.
  \end{array}
\right.
\end{align}
On the other hand, one concludes
\begin{align*}
\nonumber\|w\cdot\nabla \phi\|_{L^1_T(\mathcal{X}_{a,\sigma}^{s})}
&=\int_0^T \int_{\mathbb{R}^3}|\xi|^{s}e^{a|\xi|^{\frac{1}{\sigma}}}|\mathcal{F}[w\cdot\nabla \phi](t)| \,d\xi dt
\leq\int_0^T \int_{\mathbb{R}^3}|\xi|^{s+1}e^{a|\xi|^{\frac{1}{\sigma}}}|\mathcal{F}[\phi w](t)| \,d\xi dt\\
&=\int_0^T \|(\phi w) (t) \|_{\mathcal{X}_{a,\sigma}^{s+1}}\,dt.
\end{align*}
Consequently,  Lemma \ref{lema2} and Lemma \ref{lema1}  can be applied and we obtain
\begin{align}\label{wp1}
\nonumber\|w\cdot\nabla \phi\|_{L^1_T(\mathcal{X}_{a,\sigma}^{s})}&\leq C_{s}\int_0^T[ \|\phi\|_{\mathcal{X}_{\frac{a}{\sigma},\sigma}^{0}}\|w \|_{\mathcal{X}_{a,\sigma}^{s+1}}+ \|\phi\|_{\mathcal{X}_{a,\sigma}^{s+1}}\|w \|_{\mathcal{X}_{\frac{a}{\sigma},\sigma}^{0}}]\,dt\\
\nonumber	&\leq C_{s}\|\phi \|_{L^\infty_T(\mathcal{X}_{a,\sigma}^{s})}^{1+\frac{s}{2\beta}}\|w \|_{L^\infty_T(\mathcal{X}_{a,\sigma}^{s})}^{1-\frac{1}{2\alpha}}\int_0^T \|\phi\|_{\mathcal{X}^{s+2\beta}_{a,\sigma}}^{\frac{-s}{2\beta}}\|w\|_{\mathcal{X}^{s+2\alpha}_{a,\sigma}}^{\frac{1}{2\alpha}}\,dt\\
&\quad+C_{s}\|w \|_{L^\infty_T(\mathcal{X}_{a,\sigma}^{s})}^{1+\frac{s}{2\alpha}}\|\phi \|_{L^\infty_T(\mathcal{X}_{a,\sigma}^{s})}^{1-\frac{1}{2\beta}}\int_0^T \|w\|_{\mathcal{X}^{s+2\alpha}_{a,\sigma}}^{\frac{-s}{2\alpha}} \|\phi\|_{\mathcal{X}^{s+2\beta}_{a,\sigma}}^{\frac{1}{2\beta}}\,dt,
\end{align}
provided that $a\geq0$, $\sigma\geq1$, $\max\{-1,-2\alpha, -2\beta\}\leq s \leq0$, $\alpha\geq\frac{1}{2}$ and $\beta\geq\frac{1}{2}$. By using  H\"older's inequality twice, we can write
\begin{align}\label{wp4}
\nonumber\|w\cdot\nabla \phi\|_{L^1_T(\mathcal{X}_{a,\sigma}^{s})} \nonumber&\leq C_{s}\|\phi \|_{L^\infty_T(\mathcal{X}_{a,\sigma}^{s})}^{1+\frac{s}{2\beta}}\|w \|_{L^\infty_T(\mathcal{X}_{a,\sigma}^{s})}^{1-\frac{1}{2\alpha}}\|\phi\|_{L^1_T(\mathcal{X}^{s+2\beta}_{a,\sigma})}^{\frac{-s}{2\beta}}\Big(\int_0^T \|w\|_{\mathcal{X}^{s+2\alpha}_{a,\sigma}}^{\frac{\beta}{\alpha(s+2\beta)}}\,dt\Big)^{1+\frac{s}{2\beta}}\\
\nonumber&\quad+C_{s}\|w \|_{L^\infty_T(\mathcal{X}_{a,\sigma}^{s})}^{1+\frac{s}{2\alpha}}\|\phi \|_{L^\infty_T(\mathcal{X}_{a,\sigma}^{s})}^{1-\frac{1}{2\beta}} \|w\|_{L^1_T(\mathcal{X}^{s+2\alpha}_{a,\sigma})}^{\frac{-s}{2\alpha}} \Big(\int_0^T \|\phi\|_{\mathcal{X}^{s+2\beta}_{a,\sigma}}^{\frac{\alpha}{\beta(s+2\alpha)}}\,dt\Big)^{1+\frac{s}{2\alpha}}\\
\nonumber&\leq C_{s}T^{1+\frac{s}{2\beta}-\frac{1}{2\alpha}}\|\phi \|_{L^\infty_T(\mathcal{X}_{a,\sigma}^{s})}^{1+\frac{s}{2\beta}}\|w \|_{L^\infty_T(\mathcal{X}_{a,\sigma}^{s})}^{1-\frac{1}{2\alpha}}\|\phi\|_{L^1_T(\mathcal{X}^{s+2\beta}_{a,\sigma})}^{\frac{-s}{2\beta}} \|w\|_{L^1_T(\mathcal{X}^{s+2\alpha}_{a,\sigma})}^{\frac{1}{2\alpha}}\\
&\quad+C_{s}T^{1+\frac{s}{2\alpha}-\frac{1}{2\beta}}\|w \|_{L^\infty_T(\mathcal{X}_{a,\sigma}^{s})}^{1+\frac{s}{2\alpha}}\|\phi \|_{L^\infty_T(\mathcal{X}_{a,\sigma}^{s})}^{1-\frac{1}{2\beta}} \|w\|_{L^1_T(\mathcal{X}^{s+2\alpha}_{a,\sigma})}^{\frac{-s}{2\alpha}}  \|\phi\|_{L^1_T(\mathcal{X}^{s+2\beta}_{a,\sigma})}^{\frac{1}{2\beta}},
	\end{align}
provided that $s\geq \max\{\frac{\beta(1-2\alpha)}{\alpha},\frac{\alpha(1-2\beta)}{\beta}\}$. Hence, we establish the following inequality:
\begin{align}\label{existencianova2n}
	\|w\cdot\nabla \phi\|_{L^1_T(\mathcal{X}_{a,\sigma}^{s})}&\leq C_{s}[T^{1+\frac{s}{2\beta}-\frac{1}{2\alpha}}+T^{1+\frac{s}{2\alpha}-\frac{1}{2\beta}}]\|w \|_{X}\|\phi\|_{Y},\quad\forall  (w,\phi) \in X\times Y.
\end{align}
Therefore, by Lemma \ref{lemacalor}  (with $p=1$), (\ref{sistemaB2}) and (\ref{existencianova2n}), one concludes
\begin{align}\label{desigualdade4n}
	\|B_2((w,v),(\gamma,\phi))\|_{Y}
&\leq C_{s}[T^{1+\frac{s}{2\beta}-\frac{1}{2\alpha}}+T^{1+\frac{s}{2\alpha}-\frac{1}{2\beta}}]\|(w,v) \|_{X\times Y}\|(\gamma,\phi)\|_{X\times Y},\quad\forall  (w,v),(\gamma,\phi) \in X\times Y.
\end{align}

By following similar arguments, we are able to estimate $B_1$. In fact,
by using the definition (\ref{bilinearB1}), one can write
\begin{align}\label{sistemaB1}
\left\{
  \begin{array}{ll}
    \partial_t B_1((w,v),(\gamma,\phi))(t) + (-\Delta)^{\alpha} B_1((w,v),(\gamma,\phi))(t) = -\mathbb{P}[w\cdot\nabla \gamma](t); \\
    B_1((w,v),(\gamma,\phi))(0)=0.
  \end{array}
\right.
\end{align}
On the other hand, in order to apply Lemma \ref{lemacalor}, notice that
\begin{align}\label{p8}
\|\mathbb{P}[w\cdot\nabla \gamma]\|_{L^1_T(\mathcal{X}_{a,\sigma}^{s})}&
\leq\int_0^T \|(\gamma\otimes w) (t) \|_{\mathcal{X}_{a,\sigma}^{s+1}}\,dt.
\end{align}
As a result, we apply  Lemmas \ref{lema2} and  \ref{lema1} to reach the inequalities below:
\begin{align}\label{wp3}
\nonumber\|\mathbb{P}[w\cdot\nabla \gamma]\|_{L^1_T(\mathcal{X}_{a,\sigma}^{s})} \nonumber	&\leq C_{s}\|\gamma\|_{L^\infty_T(\mathcal{X}_{a,\sigma}^{s})}^{1+\frac{s}{2\alpha}}\|w \|_{L^\infty_T(\mathcal{X}_{a,\sigma}^{s})}^{1-\frac{1}{2\alpha}}\int_0^T \|\gamma\|_{\mathcal{X}^{s+2\alpha}_{a,\sigma}}^{\frac{-s}{2\alpha}}\|w\|_{\mathcal{X}^{s+2\alpha}_{a,\sigma}}^{\frac{1}{2\alpha}}\,dt\\
&\quad+C_{s}\|w \|_{L^\infty_T(\mathcal{X}_{a,\sigma}^{s})}^{1+\frac{s}{2\alpha}}\|\gamma \|_{L^\infty_T(\mathcal{X}_{a,\sigma}^{s})}^{1-\frac{1}{2\alpha}}\int_0^T \|w\|_{\mathcal{X}^{s+2\alpha}_{a,\sigma}}^{\frac{-s}{2\alpha}} \|\gamma\|_{\mathcal{X}^{s+2\alpha}_{a,\sigma}}^{\frac{1}{2\alpha}}\,dt,
\end{align}
provided that $a\geq0$, $\sigma\geq1$, $\max\{-1,-2\alpha\}\leq s \leq0$ and $\alpha\geq\frac{1}{2}$. As a consequence,  by   H\"older's inequality, it is true that
\begin{align}\label{wp5}
\nonumber\|\mathbb{P}[w\cdot\nabla \gamma]\|_{L^1_T(\mathcal{X}_{a,\sigma}^{s})} 
&\leq C_{s}T^{1+\frac{s-1}{2\alpha}}\|\gamma \|_{L^\infty_T(\mathcal{X}_{a,\sigma}^{s})}^{1+\frac{s}{2\alpha}}\|w \|_{L^\infty_T(\mathcal{X}_{a,\sigma}^{s})}^{1-\frac{1}{2\alpha}}\|\gamma\|_{L^1_T(\mathcal{X}^{s+2\alpha}_{a,\sigma})}^{\frac{-s}{2\alpha}} \|w\|_{L^1_T(\mathcal{X}^{s+2\alpha}_{a,\sigma})}^{\frac{1}{2\alpha}}\\
&\quad+C_{s}T^{1+\frac{s-1}{2\alpha}}\|w \|_{L^\infty_T(\mathcal{X}_{a,\sigma}^{s})}^{1+\frac{s}{2\alpha}}\|\gamma \|_{L^\infty_T(\mathcal{X}_{a,\sigma}^{s})}^{1-\frac{1}{2\alpha}} \|w\|_{L^1_T(\mathcal{X}^{s+2\alpha}_{a,\sigma})}^{\frac{-s}{2\alpha}}  \|\gamma\|_{L^1_T(\mathcal{X}^{s+2\alpha}_{a,\sigma})}^{\frac{1}{2\alpha}},
	\end{align}
provided that $s\geq 1-2\alpha$. Then, the next inequality holds:
\begin{align}\label{existencianova2n1}
	\|\mathbb{P}[w\cdot\nabla \gamma]\|_{L^1_T(\mathcal{X}_{a,\sigma}^{s})}&\leq C_{s}T^{1+\frac{s-1}{2\alpha}}\|w \|_{X}\|\gamma\|_{X},\quad\forall  w,\gamma \in X.
\end{align}
Thereby, Lemma \ref{lemacalor}  (with $p=1$), (\ref{sistemaB1}) and (\ref{existencianova2n1}) allow us to conclude that
\begin{align}\label{desigualdade4n1}
	\|B_1((w,v),(\gamma,\phi))\|_{X}
&\leq C_{s}T^{1+\frac{s-1}{2\alpha}}\|(w,v) \|_{X\times Y}\|(\gamma,\phi)\|_{X\times Y},\quad\forall  (w,v),(\gamma,\phi) \in X\times Y.
\end{align}
The estimates (\ref{desigualdade4n}) and (\ref{desigualdade4n1}) prove that the operator $B$ is continuous and, furthermore, it holds 
\begin{align}\label{estimativaB}
	\|B((w,v),(\gamma,\phi))\|_{X\times Y}
&\leq C_{s}[T^{1+\frac{s}{2\beta}-\frac{1}{2\alpha}}+T^{1+\frac{s}{2\alpha}-\frac{1}{2\beta}}+T^{1+\frac{s-1}{2\alpha}}]\|(w,v) \|_{X\times Y}\|(\gamma,\phi)\|_{X\times Y},
\end{align}
for all $(w,v),(\gamma,\phi) \in X\times Y.$

It remains to estimate the first term on the right-hand side of (\ref{estimativaprojecao3}). Thus, note that
\begin{align*}
	\|e^{- t(-\Delta)^{\beta}}\theta_0\|_{\mathcal{X}_{a,\sigma}^{s}}
&= \int_{\mathbb{R}^3} |\xi|^{s} e^{a|\xi|^{\frac{1}{\sigma}}}e^{- t|\xi|^{2\beta}} |\widehat{\theta}_0(\xi)|\,d\xi\leq \int_{\mathbb{R}^3} |\xi|^{s} e^{a|\xi|^{\frac{1}{\sigma}}} |\widehat{\theta}_0(\xi)|\,d\xi,
\end{align*}
for all $t\in[0,T]$. This means that
\begin{align}\label{novo1}
	\|e^{- t(-\Delta)^{\beta}}\theta_0\|_{L^\infty_T(\mathcal{X}_{a,\sigma}^{s})}&\leq \|\theta_0\|_{\mathcal{X}_{a,\sigma}^{s}}.
\end{align}
On the other hand, it also holds
\begin{align}\label{novo2}
	\nonumber \|e^{- t(-\Delta)^{\beta}}\theta_0\|_{L^{1}_T(\mathcal{X}_{a,\sigma}^{s+2\beta})}
&= \int_{\mathbb{R}^3} |\xi|^{s+2\beta} e^{a|\xi|^{\frac{1}{\sigma}}} |\widehat{\theta}_0(\xi)|\Big(\int_0^T e^{- t|\xi|^{2\beta}}\,dt\Big)\,d\xi\\
&\leq \int_{\mathbb{R}^3} |\xi|^{s} e^{a|\xi|^{\frac{1}{\sigma}}} |\widehat{\theta}_0(\xi)|\,d\xi= \|\theta_0\|_{\mathcal{X}_{a,\sigma}^{s}}.
\end{align}
By putting (\ref{novo1}) and (\ref{novo2}) together, we obtain
\begin{align}\label{desigualdade1np}
	\|e^{- t(-\Delta)^{\beta}}\theta_0\|_{Y}\leq 2\|\theta_0\|_{\mathcal{X}_{a,\sigma}^{s}}.
\end{align}
Similarly, we obtain
\begin{align}\label{desigualdade1n1}
	\|e^{- t(-\Delta)^{\alpha}}u_0\|_{X}\leq 2\|u_0\|_{\mathcal{X}_{a,\sigma}^{s}}.
\end{align}
Hence, from (\ref{desigualdade1np}) and (\ref{desigualdade1n1}), one infers
\begin{align}\label{desigualdade1n}
	\|(e^{- t(-\Delta)^{\alpha}}u_0,e^{- t(-\Delta)^{\beta}}\theta_0)\|_{X\times Y}\leq 2\|(u_0,\theta_0)\|_{\mathcal{X}_{a,\sigma}^{s}}.
\end{align}

Now, assume that $\rho\in[0,1)$ is such that
\begin{align}\label{rho}
\rho=\min\Big\{1+\frac{s}{2\beta}-\frac{1}{2\alpha},1+\frac{s}{2\alpha}-\frac{1}{2\beta},1+\frac{s-1}{2\alpha}\Big\},
\end{align}
provided $\max\{\frac{\beta(1-2\alpha)}{\alpha},\frac{\alpha(1-2\beta)}{\beta},1-2\alpha\}\leq s\leq0$, in order to split this proof into two cases.\\\\
\bigskip
\noindent \underline{Case 1}: Consider that 
\begin{align}\label{p10}
s> \max\Big\{1-2\alpha,\frac{\beta(1-2\alpha)}{\alpha},\frac{\alpha(1-2\beta)}{\beta}\Big\}\hbox{  and  } 0<T< [2+(24C_s\|(u_0,\theta_0)\|_{\mathcal{X}_{a,\sigma}^{s}})^{\frac{1}{2}}]^{-\frac{2}{\rho}}, 
\end{align}
where $C_s$ is given in (\ref{estimativaB}).
In this case, it is worth to notice that $0<T<\frac{1}{2}$, $\alpha>\frac{1}{2}$, $\beta>\frac{1}{2}$ and $0<\rho<1$ (see (\ref{rho})). Therefore, by (\ref{desigualdade1n}) and (\ref{p10}), the following inequalities are true:
\begin{align}\label{p1}
	\nonumber &4C_s[T^{1+\frac{s}{2\beta}-\frac{1}{2\alpha}}+T^{1+\frac{s}{2\alpha}-\frac{1}{2\beta}}+T^{1+\frac{s-1}{2\alpha}}]\|(e^{- t(-\Delta)^{\alpha}}u_0,e^{- t(-\Delta)^{\beta}}\theta_0)\|_{X\times Y}\\
&\leq 24C_sT^\rho\|(u_0,\theta_0)\|_{\mathcal{X}_{a,\sigma}^{s}}<(1-2T)^2.
\end{align}

\bigskip
\noindent \underline{Case 2}: Assume that 
\begin{align}\label{p11}
s= \max\Big\{\frac{\beta(1-2\alpha)}{\alpha},\frac{\alpha(1-2\beta)}{\beta},1-2\alpha\Big\},\quad \|(u_0,\theta_0)\|_{\mathcal{X}_{a,\sigma}^{s}}<[24C_s]^{-1} 
\end{align}
and also  
\begin{align}\label{p12}
0<T< \frac{1}{2}-[6C_s\|(u_0,\theta_0)\|_{\mathcal{X}_{a,\sigma}^{s}}]^{\frac{1}{2}},
\end{align}
 where $C_s$ is given in (\ref{estimativaB}).
It is important to point out that  $0<T<\frac{1}{2}$, $\alpha\geq\frac{1}{2}$, $\beta\geq \frac{1}{2}$ and $\rho=0$ (see (\ref{rho})). Consequently, by (\ref{desigualdade1n}), (\ref{rho}), (\ref{p11}) and (\ref{p12}), one can write
\begin{align}\label{p2}
	\nonumber& 4C_s[T^{1+\frac{s}{2\beta}-\frac{1}{2\alpha}}+T^{1+\frac{s}{2\alpha}-\frac{1}{2\beta}}+T^{1+\frac{s-1}{2\alpha}}]\|(e^{- t(-\Delta)^{\alpha}}u_0,e^{- t(-\Delta)^{\beta}}\theta_0)\|_{X\times Y}\\
&\leq 24C_s\|(u_0,\theta_0)\|_{\mathcal{X}_{a,\sigma}^{s}}<(1-2T)^2.
\end{align}

In these two cases above, by (\ref{estimativaprojecao3}), (\ref{lwfinal}), (\ref{estimativaB}), (\ref{p1}), (\ref{p2}) and Lemma \ref{lemaB}, there exists a unique $(u,\theta)\in X\times Y$ such that
$$\|(u,\theta)\|_{X\times Y}< \frac{1-2T}{2C_s[T^{1+\frac{s}{2\beta}-\frac{1}{2\alpha}}+T^{1+\frac{s}{2\alpha}-\frac{1}{2\beta}}+T^{1+\frac{s-1}{2\alpha}}]},$$
and, furthermore, 
\begin{align*}
\|(u,\theta)\|_{X\times Y}\leq \frac{2\|(e^{- t(-\Delta)^{\alpha}}u_0,e^{- t(-\Delta)^{\beta}}\theta_0)\|_{X\times Y}}{1-2T}\leq \frac{4\|(u_0,\theta_0)\|_{\mathcal{X}_{a,\sigma}^{s}}}{1-2T}.
\end{align*}

It is relevant to emphasize that the two first equations in the Boussinesq system (\ref{MHD-alpha}) can be rewritten  as follows:
\begin{equation}\label{sistemauteta}
\left\{
\begin{array}{l}
u_t
+
(-\Delta)^{\alpha}u
=
\mathbb{P}(\theta e_3)-\mathbb{P}(u\cdot\nabla u);\\
\theta_t
+
(-\Delta)^{\beta}\theta
= - u\cdot\nabla\theta.
\end{array}
\right.
\end{equation}
We shall apply Lemma \ref{lemacalor} to this system (\ref{sistemauteta}). To this end, similarly to (\ref{p3}), (\ref{existencianova2n1}) and (\ref{existencianova2n}), notice that
 \begin{align}\label{p4}
\displaystyle\|\mathbb{P}(\theta e_3)\|_{L^1_T(\mathcal{X}_{a,\sigma}^{s})}&\leq  \int_0^T\int_{\mathbb{R}^3} |\xi|^{s}e^{a|\xi|^{\frac{1}{\sigma}}} |\widehat{\theta}(\xi)|\,d\xi\,dt
\leq T \|\theta\|_{Y}<\infty,
\end{align}
 \begin{align}\label{p5}
	\|\mathbb{P}[u\cdot\nabla u]\|_{L^1_T(\mathcal{X}_{a,\sigma}^{s})}&\leq C_{s}T^{1+\frac{s-1}{2\alpha}}\|u \|_{X}^2<\infty
\end{align}
and also
\begin{align}\label{p6}
	\|u\cdot\nabla \theta\|_{L^1_T(\mathcal{X}_{a,\sigma}^{s})}&\leq C_{s}[T^{1+\frac{s}{2\beta}-\frac{1}{2\alpha}}+T^{1+\frac{s}{2\alpha}-\frac{1}{2\beta}}]\|u \|_{X}\|\theta\|_{Y}<\infty,
\end{align}
since $(u,\theta)\in X\times Y.$ Therefore, (\ref{sistemauteta}), (\ref{p4}), (\ref{p5}), (\ref{p6}) and Lemma \ref{lemacalor} imply that
$$(u,\theta)\in L^p_{T}(\mathcal{X}_{a,\sigma}^{s+\frac{2\alpha}{p}}(\mathbb{R}^3))\times L^p_{T}(\mathcal{X}_{a,\sigma}^{s+\frac{2\beta}{p}}(\mathbb{R}^3)),\quad \forall p\geq1.$$
\caixa

\noindent\textbf{Proof of Theorem \ref{teoremaexistenciaB}:}
We shall consider, as in the proof of Theorem \ref{teoremaexistenciaB2}, that (\ref{estimativaprojecao3})-(\ref{linearL1eL2}) hold  in the same function space. Thus, let us show another way to establish that  $B$ is a continuous operator, since the estimate (\ref{lwfinal}) will be reapplied as it is. Thus, by Lemmas \ref{lema2}, \ref{lemaprincipal} and \ref{lema1}  i), it is true that
\begin{align}\label{wt2}
\nonumber\|w\cdot\nabla \phi\|_{L^1_T(\mathcal{X}_{a,\sigma}^{s})}&\leq C_{s}\int_0^T[ \|\phi\|_{\mathcal{X}_{\frac{a}{\sigma},\sigma}^{0}}\|w \|_{\mathcal{X}_{a,\sigma}^{s+1}}+ \|\phi\|_{\mathcal{X}_{a,\sigma}^{s+1}}\|w \|_{\mathcal{X}_{\frac{a}{\sigma},\sigma}^{0}}]\,dt\\
	\nonumber &\leq C_{a,\sigma,s}\|\phi \|_{L^\infty_T(\mathcal{X}_{a,\sigma}^{s})}\|w \|_{L^\infty_T(\mathcal{X}_{a,\sigma}^{s})}^{1-\frac{1}{2\alpha}}\int_0^T \|w\|_{\mathcal{X}^{s+2\alpha}_{a,\sigma}}^{\frac{1}{2\alpha}}\,dt\\
&\quad+C_{a,\sigma,s}\|w \|_{L^\infty_T(\mathcal{X}_{a,\sigma}^{s})}\|\phi \|_{L^\infty_T(\mathcal{X}_{a,\sigma}^{s})}^{1-\frac{1}{2\beta}}\int_0^T  \|\phi\|_{\mathcal{X}^{s+2\beta}_{a,\sigma}}^{\frac{1}{2\beta}}\,dt,
\end{align}
provided that $(a,s,\sigma)\in \{(0,\infty)$ $\times[-1,0)\times (1,\infty) \}\cup \{[0,\infty)\times\{0\}\times [1,\infty) \}$, $\alpha\geq\frac{1}{2}$ and $\beta\geq\frac{1}{2}$. By applying  H\"older's inequality, one reaches
\begin{align}\label{wt3}
\nonumber\|w\cdot\nabla \phi\|_{L^1_T(\mathcal{X}_{a,\sigma}^{s})} \nonumber
&\leq C_{a,\sigma,s}T^{1-\frac{1}{2\alpha}}\|\phi \|_{L^\infty_T(\mathcal{X}_{a,\sigma}^{s})}\|w \|_{L^\infty_T(\mathcal{X}_{a,\sigma}^{s})}^{1-\frac{1}{2\alpha}} \|w\|_{L^1_T(\mathcal{X}^{s+2\alpha}_{a,\sigma})}^{\frac{1}{2\alpha}}\\
&\quad+C_{a,\sigma,s}T^{1-\frac{1}{2\beta}}\|w \|_{L^\infty_T(\mathcal{X}_{a,\sigma}^{s})}\|\phi \|_{L^\infty_T(\mathcal{X}_{a,\sigma}^{s})}^{1-\frac{1}{2\beta}}  \|\phi\|_{L^1_T(\mathcal{X}^{s+2\beta}_{a,\sigma})}^{\frac{1}{2\beta}}.
	\end{align}
Thereby, we can conclude
\begin{align}\label{existencianova2np}
	\|w\cdot\nabla \phi\|_{L^1_T(\mathcal{X}_{a,\sigma}^{s})}&\leq C_{a,\sigma,s}[T^{1-\frac{1}{2\alpha}}+T^{1-\frac{1}{2\beta}}]\|w \|_{X}\|\phi\|_{Y},\quad\forall  (w,\phi) \in X\times Y.
\end{align}
Apply Lemma \ref{lemacalor}  (with $p=1$), (\ref{sistemaB2}) and (\ref{existencianova2np}) in order to obtain
\begin{align}\label{desigualdade4np}
	\|B_2((w,v),(\gamma,\phi))\|_{Y}
&\leq C_{a,\sigma,s}[T^{1-\frac{1}{2\alpha}}+T^{1-\frac{1}{2\beta}}]\|(w,v) \|_{X\times Y}\|(\gamma,\phi)\|_{X\times Y},\quad\forall  (w,v),(\gamma,\phi) \in X\times Y.
\end{align}
On the other hand, from (\ref{p8}) and Lemma \ref{lema2}, one obtains 
\begin{align*}
\nonumber\|\mathbb{P}[w\cdot\nabla \gamma]\|_{L^1_T(\mathcal{X}_{a,\sigma}^{s})}&
\leq C_{s}\int_0^T[ \|\gamma\|_{\mathcal{X}_{\frac{a}{\sigma},\sigma}^{0}}\|w \|_{\mathcal{X}_{a,\sigma}^{s+1}}+ \|\gamma\|_{\mathcal{X}_{a,\sigma}^{s+1}}\|w \|_{\mathcal{X}_{\frac{a}{\sigma},\sigma}^{0}}]\,dt,
\end{align*}
provided that $a\geq0$, $\sigma\geq1$ and $s\geq -1$. Consequently,  Lemmas \ref{lemaprincipal} and  \ref{lema1} i) imply
\begin{align}\label{wp2}
\nonumber\|\mathbb{P}[w\cdot\nabla \gamma]\|_{L^1_T(\mathcal{X}_{a,\sigma}^{s})} \nonumber	&\leq C_{a,\sigma,s}\|\gamma\|_{L^\infty_T(\mathcal{X}_{a,\sigma}^{s})}\|w \|_{L^\infty_T(\mathcal{X}_{a,\sigma}^{s})}^{1-\frac{1}{2\alpha}}\int_0^T\|w\|_{\mathcal{X}^{s+2\alpha}_{a,\sigma}}^{\frac{1}{2\alpha}}\,dt\\
&\quad+C_{a,\sigma,s}\|w \|_{L^\infty_T(\mathcal{X}_{a,\sigma}^{s})}\|\gamma \|_{L^\infty_T(\mathcal{X}_{a,\sigma}^{s})}^{1-\frac{1}{2\alpha}}\int_0^T  \|\gamma\|_{\mathcal{X}^{s+2\alpha}_{a,\sigma}}^{\frac{1}{2\alpha}}\,dt,
\end{align}
provided that $(a,s,\sigma)\in \{(0,\infty)$ $\times(-\infty,0)\times (1,\infty) \}\cup \{[0,\infty)\times\{0\}\times [1,\infty) \}$ and $\alpha\geq\frac{1}{2}$. Then,  by   H\"older's inequality, one can write 
\begin{align}\label{wt1}
\nonumber\|\mathbb{P}[w\cdot\nabla \gamma]\|_{L^1_T(\mathcal{X}_{a,\sigma}^{s})} 
&\leq C_{a,\sigma,s}T^{1-\frac{1}{2\alpha}}\|\gamma \|_{L^\infty_T(\mathcal{X}_{a,\sigma}^{s})}\|w \|_{L^\infty_T(\mathcal{X}_{a,\sigma}^{s})}^{1-\frac{1}{2\alpha}} \|w\|_{L^1_T(\mathcal{X}^{s+2\alpha}_{a,\sigma})}^{\frac{1}{2\alpha}}\\
&\quad+C_{a,\sigma,s}T^{1-\frac{1}{2\alpha}}\|w \|_{L^\infty_T(\mathcal{X}_{a,\sigma}^{s})}\|\gamma \|_{L^\infty_T(\mathcal{X}_{a,\sigma}^{s})}^{1-\frac{1}{2\alpha}}   \|\gamma\|_{L^1_T(\mathcal{X}^{s+2\alpha}_{a,\sigma})}^{\frac{1}{2\alpha}}.
	\end{align}
As a consequence, it follows that
\begin{align}\label{existencianova2n1p}
	\|\mathbb{P}[w\cdot\nabla \gamma]\|_{L^1_T(\mathcal{X}_{a,\sigma}^{s})}&\leq C_{a,\sigma,s}T^{1-\frac{1}{2\alpha}}\|w \|_{X}\|\gamma\|_{X},\quad\forall  w,\gamma \in X.
\end{align}
Hence, by Lemma \ref{lemacalor}  (with $p=1$), (\ref{sistemaB1}) and (\ref{existencianova2n1p}), we have
\begin{align}\label{desigualdade4n1p}
	\|B_1((w,v),(\gamma,\phi))\|_{X}
&\leq C_{a,\sigma,s}T^{1-\frac{1}{2\alpha}}\|(w,v) \|_{X\times Y}\|(\gamma,\phi)\|_{X\times Y},\quad\forall  (w,v),(\gamma,\phi) \in X\times Y.
\end{align}
Thereby,  (\ref{desigualdade4np}) and (\ref{desigualdade4n1p}) show that the operator $B$ is continuous and, moreover, one obtains
\begin{align}\label{estimativaBp}
	\|B((w,v),(\gamma,\phi))\|_{X\times Y}
&\leq C_{a,\sigma,s}[T^{1-\frac{1}{2\alpha}}+T^{1-\frac{1}{2\beta}}]\|(w,v) \|_{X\times Y}\|(\gamma,\phi)\|_{X\times Y},\quad \forall (w,v),(\gamma,\phi) \in X\times Y.
\end{align}

Assume that $\rho\in[0,1)$ is such that
\begin{align}\label{rhop}
\rho=\min\Big\{1-\frac{1}{2\alpha},1-\frac{1}{2\beta}\Big\},
\end{align}
provided $\alpha\geq \frac{1}{2}$ and $\beta\geq \frac{1}{2}$, in order to split this proof into two cases.\\\\
\bigskip
\noindent \underline{Case 1}: Consider that 
\begin{align}\label{p13}
\alpha>\frac{1}{2},\,\,\beta>\frac{1}{2}\hbox{  and  } 0<T< [2+(16C_{a,\sigma,s}\|(u_0,\theta_0)\|_{\mathcal{X}_{a,\sigma}^{s}})^{\frac{1}{2}}]^{-\frac{2}{\rho}},
\end{align}
 where $C_{a,\sigma,s}$ is given in (\ref{estimativaBp}).

In this case, we have $0<T<\frac{1}{2}$, and $0<\rho<1$ (see (\ref{rhop})). As a result, (\ref{desigualdade1n}) and (\ref{p13}) imply that
\begin{align}\label{p15}
	\nonumber &4C_{a,\sigma,s}[T^{1-\frac{1}{2\alpha}}+T^{1-\frac{1}{2\beta}}]\|(e^{- t(-\Delta)^{\alpha}}u_0,e^{- t(-\Delta)^{\beta}}\theta_0)\|_{X\times Y}\\
&\leq 16C_{a,\sigma,s}T^\rho\|(u_0,\theta_0)\|_{\mathcal{X}_{a,\sigma}^{s}}<(1-2T)^2.
\end{align}

\bigskip
\noindent \underline{Case 2}: Assume that 
\begin{align}\label{p14}
\alpha=\frac{1}{2} \hbox{  or  } \beta=\frac{1}{2},\,\, \|(u_0,\theta_0)\|_{\mathcal{X}_{a,\sigma}^{s}}<[16C_{a,\sigma,s}]^{-1} \hbox{  and  } 0<T< \frac{1}{2}-2[C_{a,\sigma,s}\|(u_0,\theta_0)\|_{\mathcal{X}_{a,\sigma}^{s}}]^{\frac{1}{2}}, 
\end{align}
where $C_{a,\sigma,s}$ is given in (\ref{estimativaBp}).

Note that  $0<T<\frac{1}{2}$ and $\rho=0$ (see (\ref{rhop})). Thereby, by (\ref{desigualdade1n}) and (\ref{p14}), we infer
\begin{align}\label{p16}
	4C_{a,\sigma,s}[T^{1-\frac{1}{2\alpha}}+T^{1-\frac{1}{2\beta}}]\|(e^{- t(-\Delta)^{\alpha}}u_0,e^{- t(-\Delta)^{\beta}}\theta_0)\|_{X\times Y}
\leq 16C_{a,\sigma,s}\|(u_0,\theta_0)\|_{\mathcal{X}_{a,\sigma}^{s}}<(1-2T)^2.
\end{align}

By using these two cases above, (\ref{estimativaprojecao3}), (\ref{lwfinal}), (\ref{estimativaBp}), (\ref{p15}), (\ref{p16}) and Lemma \ref{lemaB}, there is a unique $(u,\theta)\in X\times Y$ such that
$$\|(u,\theta)\|_{X\times Y}< \frac{1-2T}{2C_{a,\sigma,s}[T^{1-\frac{1}{2\alpha}}+T^{1-\frac{1}{2\beta}}]},$$
and, moreover, 
\begin{align*}
\|(u,\theta)\|_{X\times Y}\leq \frac{2\|(e^{- t(-\Delta)^{\alpha}}u_0,e^{- t(-\Delta)^{\beta}}\theta_0)\|_{X\times Y}}{1-2T}\leq \frac{4\|(u_0,\theta_0)\|_{\mathcal{X}_{a,\sigma}^{s}}}{1-2T}.
\end{align*}

Lastly, let us reapply Lemma \ref{lemacalor} to the system (\ref{sistemauteta}). Thus, by (\ref{p4}), (\ref{existencianova2n1p}) and (\ref{existencianova2np}), one infers
 \begin{align}\label{p17}
	\|\mathbb{P}[u\cdot\nabla u]\|_{L^1_T(\mathcal{X}_{a,\sigma}^{s})}&\leq C_{a,\sigma,s}T^{1-\frac{1}{2\alpha}}\|u \|_{X}^2<\infty
\end{align}
and also
\begin{align}\label{p18}
	\|u\cdot\nabla \theta\|_{L^1_T(\mathcal{X}_{a,\sigma}^{s})}&\leq C_{a,\sigma,s}[T^{1-\frac{1}{2\alpha}}+T^{1-\frac{1}{2\beta}}]\|u \|_{X}\|\theta\|_{Y}<\infty,
\end{align}
since $(u,\theta)\in X\times Y.$ Therefore, by applying (\ref{sistemauteta}), (\ref{p4}), (\ref{p17}), (\ref{p18}) and Lemma \ref{lemacalor}, one deduces
$$(u,\theta)\in L^p_{T}(\mathcal{X}_{a,\sigma}^{s+\frac{2\alpha}{p}}(\mathbb{R}^3))\times L^p_{T}(\mathcal{X}_{a,\sigma}^{s+\frac{2\beta}{p}}(\mathbb{R}^3)),\quad \forall p\geq1.$$
\caixa

\section{Blow-up criteria for local solutions}\label{secaoblowupB}

In this section, we shall present the proofs of Theorems \ref{teoremaB2} and \ref{teoremaB1}, and Corollaries \ref{corollaryB1} and \ref{corollaryB2}. These four results are focused on showing the behavior at potential blow-up times for the mild solutions  of the Boussinesq equations (\ref{MHD-alpha}) obtained in Theorems \ref{teoremaexistenciaB2} and \ref{teoremaexistenciaB}.


\bigskip

\noindent\textbf{Proof of Theorems \ref{teoremaB2} i) and \ref{teoremaB1} i):} Suppose that Theorems \ref{teoremaB2} i) and \ref{teoremaB1} \textbf{i)} are not true. This means that
\begin{align}\label{ls1}
\displaystyle \limsup_{t\nearrow T^*} \|(u,\theta)(t)\|_{\mathcal{X}_{a,\sigma}^{s}}<\infty.
\end{align}
Our goal is to verify  that our solution $(u,\theta)(\cdot, t)$ can be extended beyond $t=T^*$. Indeed,  (\ref{ls1}) and Theorems  \ref{teoremaexistenciaB2} and \ref{teoremaexistenciaB} imply that
\begin{align}\label{ls2}
\|(u,\theta)(t)\|_{\mathcal{X}_{a,\sigma}^{s}} \leq C_1, \quad \forall t\in [0,T^*),
\end{align}
where $C_1=C_{a,\sigma,s,T^*}$ is a positive constant. On the other hand, by taking Fourier Transform and the scalar product in $\mathbb{C}^3$ of the first equation of ($\ref{MHD-alpha}$) with $\widehat{u}(t)$, one has
\begin{align*}
\frac{1}{2}\partial_t|\widehat{u}(t)|^2+|\xi|^{2\alpha}|\hat{u}|^2 &\leq
|\widehat{u}\cdot \widehat{u\cdot\nabla \displaystyle u}|+|\widehat{u}_3\widehat{\theta}|.
\end{align*}
Thus, by choosing $\varepsilon>0$, one infers
\begin{align*}
 \partial_{t}\sqrt{|\widehat{u}(t)|^2+\varepsilon}+\frac{|\xi|^{2\alpha}|\widehat{u}|^2}{\sqrt{|\widehat{u}|^2+\varepsilon}}
&\leq|\widehat{u\cdot\nabla \displaystyle u}|+|\widehat{\theta}|.
\end{align*}
Integrate the result above over $[0,t]$ in order to obtain
\begin{align*} \sqrt{|\widehat{u}(t)|^2+\varepsilon}+ \int_{0}^{t}\frac{|\xi|^{2\alpha}|\widehat{u}(\tau)|^2}{\sqrt{|\widehat{u}(\tau)|^2+\varepsilon}}\,d\tau\leq\sqrt{|\widehat{u}_0|^2+\varepsilon}
+\int_{0}^{t}[|\widehat{(u\cdot\nabla \displaystyle u)}(\tau)|+|\widehat{\theta}(\tau)|]\,d\tau.
\end{align*}
By passing to the limit the inequality above, as $\varepsilon\searrow0$, multiplying by $|\xi|^{s}e^{a|\xi|^{\frac{1}{\sigma}}}$ and integrating over $\xi \in\mathbb{R}^3$, one can write
\begin{align}\label{ls3}
\|u(t)\|_{\mathcal{X}_{a,\sigma}^{s}} + \int_{0}^{t}\|u(\tau)\|_{\mathcal{X}_{a,\sigma}^{s+2\alpha}}\;d\tau\leq \|u_0\|_{\mathcal{X}_{a,\sigma}^{s}}+\int_{0}^{t}[\|u\cdot\nabla u(\tau)\|_{\mathcal{X}_{a,\sigma}^{s}} + \|\theta(\tau)\|_{\mathcal{X}_{a,\sigma}^{s}}]\;d\tau,
\end{align}
for all $t\in [0,T^*)$. Analogously, one reaches the following inequality: 
\begin{align}\label{ls3b}
\|\theta(t)\|_{\mathcal{X}_{a,\sigma}^{s}} + \int_{0}^{t}\|\theta(\tau)\|_{\mathcal{X}_{a,\sigma}^{s+2\beta}}\;d\tau\leq \|\theta_0\|_{\mathcal{X}_{a,\sigma}^{s}}+\int_{0}^{t}\|u\cdot\nabla \theta(\tau)\|_{\mathcal{X}_{a,\sigma}^{s}}\;d\tau,\quad \forall t\in [0,T^*).
\end{align}

On the other hand, let us denote  $(\kappa_n)_{n\in\mathbb{N}}$ as being an arbitrary sequence
such that $0<\kappa_n<T^*$ and
$\kappa_n\nearrow T^*$, as $n\rightarrow\infty$.  Thereby, we shall prove that
\begin{align}\label{e5}
\lim_{n,m\rightarrow \infty} \|(u,\theta)(\kappa_n)-(u,\theta)(\kappa_m)\|_{\mathcal{X}_{a,\sigma}^s}=0.
\end{align}
As follows, it will be assumed that $\kappa_m\leq \kappa_n$ (without loss of generality). By  (\ref{estimativaprojecao3}), we can write the next equality:
\begin{align}\label{wp6}
(u,\theta)(\kappa_n)-(u,\theta)(\kappa_m)= I_1+I_2+I_3,
\end{align}
where
\begin{align}\label{I1}
I_1&= (I_{11},I_{12}):= ([e^{-  \kappa_n (-\Delta)^\alpha}-e^{-\kappa_m(-\Delta)^\alpha}]u_0,[e^{-\kappa_n (-\Delta)^\beta}-e^{- \kappa_m (-\Delta)^\beta}]\theta_0),
\end{align}
\begin{align}\label{wilber19}
\nonumber &I_2=(I_{21}^{(1)}+I_{21}^{(2)},I_{22}):= \Big(\int_{0}^{\kappa_m}[e^{- (\kappa_m-\tau)(-\Delta)^\alpha}-e^{- (\kappa_n-\tau)(-\Delta)^\alpha}] \mathbb{P}[u\cdot \nabla u]\,d\tau+\\ &
\int_{0}^{\kappa_m}[e^{-  (\kappa_n-\tau)(-\Delta)^\alpha}-e^{- (\kappa_m-\tau)(-\Delta)^\alpha}] \mathbb{P}[\theta e_3]\,d\tau,\int_{0}^{\kappa_m}[e^{- (\kappa_m-\tau)(-\Delta)^\beta}-e^{- (\kappa_n-\tau)(-\Delta)^\beta}] (u\cdot \nabla \theta)\,d\tau\Big)
\end{align}
and also
\begin{align}\label{wilber20}
\nonumber I_3=(I_{31}^{(1)}+I_{31}^{(2)},I_{32})&:= \Big(-\int_{\kappa_m}^{\kappa_n} e^{- (\kappa_n-\tau)(-\Delta)^\alpha} \mathbb{P}[u\cdot \nabla u]\,d\tau+
\int_{\kappa_m}^{\kappa_n} e^{- (\kappa_n-\tau)(-\Delta)^\alpha} \mathbb{P}[\theta e_3]\,d\tau,\\
&\quad-\int_{\kappa_m}^{\kappa_n}e^{- (\kappa_n-\tau)(-\Delta)^\beta} (u\cdot \nabla \theta)\,d\tau\Big).
\end{align}

In order to verify (\ref{e5}), let us split the proof of this statement into two cases.

\bigskip
\noindent \underline{Case 1}: Assume that 
$$a\geq0,\,\, \sigma \geq1,\,\,  (u_0,\theta_0)\in \mathcal{X}_{a,\sigma}^s(\mathbb{R}^3)$$
and also that
$$s> \max\{1-2\alpha,\tfrac{\alpha(1-2\beta)}{\beta},\tfrac{\beta(1-2\alpha)}{\alpha}\}\,\, \hbox{  and  } \,\, s\in [-1,0].$$

First of all, notice that the conditions above for $s$ show that $\alpha>\frac{1}{2}$ and $\beta>\frac{1}{2}$.  On the other hand, by (\ref{wp3}), (\ref{wp1}), (\ref{ls2}), (\ref{ls3}) and (\ref{ls3b}), one obtains
\begin{align}\label{ls4}
\nonumber\|u(t)\|_{\mathcal{X}_{a,\sigma}^{s}} + \int_{0}^{t}\|u(\tau)\|_{\mathcal{X}_{a,\sigma}^{s+2\alpha}}\;d\tau&\leq \|u_0\|_{\mathcal{X}_{a,\sigma}^{s}}
+C_{s}C_1^{2+\frac{s-1}{2\alpha}}\int_0^t\|u\|_{\mathcal{X}^{s+2\alpha}_{a,\sigma}}^{\frac{1-s}{2\alpha}}\,dt+\int_{0}^{t} \|\theta\|_{\mathcal{X}_{a,\sigma}^{s}}\;d\tau\\
&\leq \|u_0\|_{\mathcal{X}_{a,\sigma}^{s}}
+C_{s}C_1^{2+\frac{s-1}{2\alpha}}\int_0^t\|u\|_{\mathcal{X}^{s+2\alpha}_{a,\sigma}}^{\frac{1-s}{2\alpha}}\,dt+C_1T^*
\end{align}
and also
\begin{align}\label{ls4b}
\nonumber\|\theta(t)\|_{\mathcal{X}_{a,\sigma}^{s}} + \int_{0}^{t}\|\theta(\tau)\|_{\mathcal{X}_{a,\sigma}^{s+2\beta}}\;d\tau&\leq \|\theta_0\|_{\mathcal{X}_{a,\sigma}^{s}}+C_{s}C_1^{2+\frac{s}{2\beta}-\frac{1}{2\alpha}}\int_0^t\|\theta\|_{\mathcal{X}^{s+2\beta}_{a,\sigma}}^{\frac{-s}{2\beta}}\|u\|_{\mathcal{X}^{s+2\alpha}_{a,\sigma}}^{\frac{1}{2\alpha}}\,dt\\
&\quad+C_{s}C_1^{2+\frac{s}{2\alpha}-\frac{1}{2\beta}}\int_0^t \|u\|_{\mathcal{X}^{s+2\alpha}_{a,\sigma}}^{\frac{-s}{2\alpha}} \|\theta\|_{\mathcal{X}^{s+2\beta}_{a,\sigma}}^{\frac{1}{2\beta}}\,dt,
\end{align}
for all $t\in[0,T^*)$, provided that $a\geq0$, $\sigma\geq1$, $\max\{-1,-2\alpha, -2\beta\}\leq s \leq0$, $\alpha\geq\frac{1}{2}$ and $\beta\geq\frac{1}{2}$. Thus,
by using Young's inequality, one infers
\begin{align}\label{w1}
&\|u(t)\|_{\mathcal{X}_{a,\sigma}^{s}} + \int_{0}^{t}\|u(\tau)\|_{\mathcal{X}_{a,\sigma}^{s+2\alpha}}\;d\tau\leq \|u_0\|_{\mathcal{X}_{a,\sigma}^{s}}
+\frac{1}{4}\int_{0}^{t}\|u(\tau)\|_{\mathcal{X}_{a,\sigma}^{s+2\alpha}}\;d\tau+ C_{a,\sigma,s,\alpha,T^*},
\end{align}
provided that $s>1-2\alpha$, and also
\begin{align}\label{w2}
&\|\theta(t)\|_{\mathcal{X}_{a,\sigma}^{s}} + \int_{0}^{t}\|\theta(\tau)\|_{\mathcal{X}_{a,\sigma}^{s+2\beta}}\;d\tau\leq \|\theta_0\|_{\mathcal{X}_{a,\sigma}^{s}}
+ \frac{1}{4}\int_{0}^{t}\|u(\tau)\|_{\mathcal{X}_{a,\sigma}^{s+2\alpha}}\;d\tau +
\frac{1}{2}\int_{0}^{t}\|\theta(\tau)\|_{\mathcal{X}_{a,\sigma}^{s+2\beta}}\;
d\tau
+ C_{a,\sigma,s,\alpha,\beta,T^*},
\end{align}
provided that $s>\max\{\tfrac{\alpha(1-2\beta)}{\beta},\tfrac{\beta(1-2\alpha)}{\alpha}\}$. Then, by adding (\ref{w1}) to (\ref{w2}), one deduces
\begin{align*}
&\|(u,\theta)(t)\|_{\mathcal{X}_{a,\sigma}^{s}}+\frac{1}{2}\int_{0}^{t}\|u(\tau)\|_{\mathcal{X}_{a,\sigma}^{s+2\alpha}}\;d\tau+\frac{1}{2}\int_{0}^{t}\|\theta(\tau)\|_{\mathcal{X}_{a,\sigma}^{s+2\beta}}\;d\tau\leq \|(u_0,\theta_0)\|_{\mathcal{X}_{a,\sigma}^{s}}
+ C_{a,\sigma,s,\alpha,\beta,T^*}.
\end{align*}
As a result, one has
\begin{align}\label{ls5}
\int_{0}^{t}\|u(\tau)\|_{\mathcal{X}_{a,\sigma}^{s+2\alpha}}\;d\tau +
\int_{0}^{t}\|\theta(\tau)\|_{\mathcal{X}_{a,\sigma}^{s+2\beta}}\;
d\tau  &\leq C_2,\quad\forall t\in [0,T^*),
\end{align}
where $C_2=C_{a,\sigma,s,\alpha,\beta,u_0,\theta_0,T^*}$ is a positive constant. 

Now, we are ready to prove (\ref{e5}) in this first case. At first, by (\ref{I1}), it holds the following inequality:
\begin{align*}
\|I_{12}\|_{\mathcal{X}_{a,\sigma}^{s}}
&\leq \int_{\mathbb{R}^3} |\xi|^{s}e^{a|\xi|^{\frac{1}{\sigma}}}(e^{-\kappa_m|\xi|^{2\beta}}-e^{-T^*|\xi|^{2\beta}})|\widehat{\theta}_0(\xi)|\;d\xi.
\end{align*}
By applying the fact that  $\theta_0\in\mathcal{X}_{a,\sigma}^s(\mathbb{R}^3)$, we infer
$\displaystyle 
\lim_{n,m\to\infty}\|I_{12}\|_{\mathcal{X}_{a,\sigma}^{s}}=0.
$
Similarly, we can claim
$
\displaystyle \lim_{n,m\to\infty}\|I_{11}\|_{\mathcal{X}_{a,\sigma}^{s}}=0,
$
since $u_0\in\mathcal{X}_{a,\sigma}^s(\mathbb{R}^3)$. Consequently, one reaches
$\displaystyle 
\lim_{n,m\to\infty}\|I_1\|_{\mathcal{X}_{a,\sigma}^{s}}=0.$

On the other hand, by (\ref{wilber19}), one can write
\begin{align*}
\|I_{21}^{(2)}\|_{\mathcal{X}_{a,\sigma}^{s}}&\leq \int_{0}^{\kappa_m}\|[e^{-  (\kappa_n-\tau)(-\Delta)^\alpha}-e^{- (\kappa_m-\tau)(-\Delta)^\alpha}] \mathbb{P}[\theta e_3]\|_{\mathcal{X}_{a,\sigma}^{s}}\,d\tau\\
&\leq \int_{0}^{\kappa_m}\int_{\mathbb{R}^3}|\xi|^se^{a|\xi|^{\frac{1}{\sigma}}}[e^{-(\kappa_m-\tau)|\xi|^{2\alpha}}-e^{-(\kappa_n-\tau)|\xi|^{2\alpha}}]|\widehat{\theta}(\tau)|\;d\xi d\tau\\
&\leq \int_{0}^{T^*}\int_{\mathbb{R}^3}|\xi|^se^{a|\xi|^{\frac{1}{\sigma}}}[1-e^{-(T^*-\kappa_m)|\xi|^{2\alpha}}]|\widehat{\theta}(\tau)|\;d\xi d\tau.
\end{align*}
As a result, $\displaystyle\lim_{n,m\rightarrow\infty}\|I_{21}^{(2)}\|_{\mathcal{X}_{a,\sigma}^{s}}=0$; since, by (\ref{ls2}), it holds
\begin{align*}
 \int_{0}^{T^*}\int_{\mathbb{R}^3}|\xi|^se^{a|\xi|^{\frac{1}{\sigma}}}|\widehat{\theta}(\tau)|\;d\xi d\tau= \int_{0}^{T^*}\|\theta(\tau)\|_{\mathcal{X}_{a,\sigma}^{s}}\; d\tau\leq C_1T^*<\infty.
\end{align*}
Moreover, it follows that
\begin{align}\label{wt4}
\nonumber\|I_{22}\|_{\mathcal{X}_{a,\sigma}^{s}}	&\leq
	\int_{0}^{\kappa_m}\int_{\mathbb{R}^3}|\xi|^{s}e^{a|\xi|^{\frac{1}{\sigma}}}[e^{-(\kappa_m-\tau)|\xi|^{2\beta}}-e^{-(\kappa_n-\tau)|\xi|^{2\beta}}]|\mathcal{F}(u\cdot\nabla \theta)(\xi)|\;d\xi\, d\tau\\
	&\qquad\leq
	\int_{0}^{T^*}\int_{\mathbb{R}^3}|\xi|^{s}e^{a|\xi|^{\frac{1}{\sigma}}}[1-e^{-(T^*-\kappa_m)|\xi|^{2\beta}}]|\mathcal{F}(u\cdot\nabla\theta)(\xi)|\;d\xi\, d\tau.
	\end{align}
As a consequence, $\displaystyle \lim_{n,m\rightarrow \infty}\|I_{22}\|_{\mathcal{X}_{a,\sigma}^{s}}=0$; since, by  (\ref{wp4}), (\ref{ls2}) and (\ref{ls5}), we obtain
\begin{align*}
\nonumber&\int_{0}^{T^*}\|u\cdot\nabla \theta\|_{\mathcal{X}_{a,\sigma}^s}\; d\tau\leq C_{s}[(T^{*})^{1+\frac{s}{2\beta}-\frac{1}{2\alpha}}C_1^{2+\frac{s}{2\beta}-\frac{1}{2\alpha}}C_{2}^{\frac{1}{2\alpha}-\frac{s}{2\beta}} + (T^{*})^{1+\frac{s}{2\alpha}-\frac{1}{2\beta}}C_1^{2+\frac{s}{2\alpha}-\frac{1}{2\beta}}C_{2}^{\frac{1}{2\beta}-\frac{s}{2\alpha}}]<\infty.
\end{align*}
(Here, $a\geq0$, $\sigma\geq1$, $\max\{-1,\frac{\beta(1-2\alpha)}{\alpha},\frac{\alpha(1-2\beta)}{\beta}\}\leq s \leq0$, $\alpha\geq\frac{1}{2}$ and $\beta\geq\frac{1}{2}$).
Similarly, by  (\ref{wp5}), (\ref{ls2}) and (\ref{ls5}), we deduce
$
\displaystyle \lim_{n,m\to\infty}\|I_{21}^{(1)}\|_{\mathcal{X}_{a,\sigma}^{s}}=0,$ provided that $a\geq0$, $\sigma\geq1$, $\max\{-1,1-2\alpha\}\leq s \leq0$ and $\alpha\geq\frac{1}{2}$. Thereby,
$\displaystyle \lim_{n,m\to\infty}\|I_2\|_{\mathcal{X}_{a,\sigma}^{s}}=0.$

Lastly, by (\ref{wilber20}) and (\ref{ls2}), one has
\begin{align}\label{wt5}
\nonumber\|I_{31}^{(2)}\|_{\mathcal{X}_{a,\sigma}^{s}}&\leq \int_{\kappa_m}^{\kappa_n}\int_{\mathbb{R}^3}|\xi|^se^{a|\xi|^{\frac{1}{\sigma}}}e^{-(\kappa_n-\tau)|\xi|^{2\alpha}}|\mathcal{F}[\mathbb{P}(\theta e_3)]|\;d\xi d\tau\\
&\leq \int_{\kappa_m}^{T^*}\|\theta(\tau)\|_{\mathcal{X}_{a,\sigma}^{s}} d\tau\leq C_1(T^*-\kappa_m).
\end{align}
Then, $\displaystyle \lim_{n,m\to\infty}\|I_{31}^{(2)}\|_{\mathcal{X}_{a,\sigma}^{s}}=0.$ Furthermore, analogously to (\ref{wp4}), and also by (\ref{ls2}) and (\ref{ls5}), we  obtain
\begin{align*}
\|I_{32}\|_{\mathcal{X}_{a,\sigma}^{s}}
\leq C_{s}[(T^*-\kappa_m)^{1+\frac{s}{2\beta}-\frac{1}{2\alpha}}C_1^{2+\frac{s}{2\beta}-\frac{1}{2\alpha}}C_2^{\frac{1}{2\alpha}-\frac{s}{2\beta}}+(T^*-\kappa_m)^{1+\frac{s}{2\alpha}-\frac{1}{2\beta}}
C_1^{2+\frac{s}{2\alpha}-\frac{1}{2\beta}} C_2^{\frac{1}{2\beta}-\frac{s}{2\alpha}}],
	\end{align*}
provided that $a\geq0$, $\sigma\geq1$, $\max\{-1,\frac{\beta(1-2\alpha)}{\alpha},\frac{\alpha(1-2\beta)}{\beta}\}\leq s \leq0$, $\alpha\geq\frac{1}{2}$ and $\beta\geq\frac{1}{2}$. As a consequence, we can write $\displaystyle \lim_{n,m\to\infty}\|I_{32}\|_{\mathcal{X}_{a,\sigma}^{s}}=0$ if $s>\max\{\frac{\beta(1-2\alpha)}{\alpha},\frac{\alpha(1-2\beta)}{\beta}\} $, $\alpha>\frac{1}{2}$ and $\beta>\frac{1}{2}$. Analogously, from (\ref{wp5}), (\ref{ls2}) and (\ref{ls5}), we can write
\begin{align*}
\|I_{32}^{(1)}\|_{\mathcal{X}_{a,\sigma}^{s}}
\leq C_{s}(T^*-\kappa_m)^{1+\frac{s-1}{2\alpha}}C_1^{2+\frac{s-1}{2\alpha}}C_2^{\frac{1-s}{2\alpha}},
	\end{align*}
provided that $a\geq0$, $\sigma\geq1$, $s\in[-1,0]$, $1-2\alpha< s$ and $\alpha>\frac{1}{2}$. By passing to the limit, as $n,m\to\infty$, one infers
$\displaystyle 
\lim_{n,m\to\infty}\|I_{32}^{(1)}\|_{\mathcal{X}_{a,\sigma}^{s}}=0$. These arguments lead us to $\displaystyle 
\lim_{n,m\to\infty}\|I_{3}\|_{\mathcal{X}_{a,\sigma}^{s}}=0$.  This completes the proof of (\ref{e5}) (see also (\ref{wp6})).

\bigskip
\noindent \underline{Case 2}: Assume that $$\alpha>\frac{1}{2}, \,\,\beta>\frac{1}{2},\,\,(u_0,\theta_0)\in \mathcal{X}_{a,\sigma}^s(\mathbb{R}^3)$$
and also
$$(a,s,\sigma)\in\{(0,\infty)\times [-1,0)\times (1,\infty)\}\cup \{[0,\infty)\times\{0\}\times [1,\infty)\}.$$  

At first, by (\ref{wp2}), (\ref{wt2}), (\ref{ls2}), (\ref{ls3}) and (\ref{ls3b}), it is true that
\begin{align}\label{ls4wp}
\nonumber\|u(t)\|_{\mathcal{X}_{a,\sigma}^{s}} + \int_{0}^{t}\|u(\tau)\|_{\mathcal{X}_{a,\sigma}^{s+2\alpha}}\;d\tau&\leq \|u_0\|_{\mathcal{X}_{a,\sigma}^{s}}
+C_{a,\sigma,s}C_1^{2-\frac{1}{2\alpha}}\int_0^t\|u\|_{\mathcal{X}^{s+2\alpha}_{a,\sigma}}^{\frac{1}{2\alpha}}\,dt+\int_{0}^{t} \|\theta\|_{\mathcal{X}_{a,\sigma}^{s}}\;d\tau\\
&\leq \|u_0\|_{\mathcal{X}_{a,\sigma}^{s}}
+C_{a,\sigma,s}C_1^{2-\frac{1}{2\alpha}}\int_0^t\|u\|_{\mathcal{X}^{s+2\alpha}_{a,\sigma}}^{\frac{1}{2\alpha}}\,dt+C_1T^*
\end{align}
 and also
\begin{align}\label{ls4bwp}
\|\theta(t)\|_{\mathcal{X}_{a,\sigma}^{s}} + \int_{0}^{t}\|\theta(\tau)\|_{\mathcal{X}_{a,\sigma}^{s+2\beta}}\;d\tau&\leq \|\theta_0\|_{\mathcal{X}_{a,\sigma}^{s}}+C_{a,\sigma,s}C_1^{2-\frac{1}{2\alpha}}\int_0^t \|u\|_{\mathcal{X}^{s+2\alpha}_{a,\sigma}}^{\frac{1}{2\alpha}}\,dt+C_{a,\sigma,s}C_1^{2-\frac{1}{2\beta}}\int_0^t  \|\theta\|_{\mathcal{X}^{s+2\beta}_{a,\sigma}}^{\frac{1}{2\beta}}\,dt,
\end{align}
for all $t\in [0,T^*)$, provided that $(a,s,\sigma)\in \{(0,\infty)$ $\times[-1,0)\times (1,\infty) \}\cup \{[0,\infty)\times\{0\}\times [1,\infty) \}$, $\alpha\geq\frac{1}{2}$ and $\beta\geq\frac{1}{2}$. Thus,
by using Young's inequality and  (\ref{ls2}), one infers
\begin{align}\label{w1wp}
&\|u(t)\|_{\mathcal{X}_{a,\sigma}^{s}} + \int_{0}^{t}\|u(\tau)\|_{\mathcal{X}_{a,\sigma}^{s+2\alpha}}\;d\tau\leq \|u_0\|_{\mathcal{X}_{a,\sigma}^{s}}
+\frac{1}{4}\int_{0}^{t}\|u(\tau)\|_{\mathcal{X}_{a,\sigma}^{s+2\alpha}}\;d\tau+ C_{a,\sigma,s,\alpha,T^*}
\end{align}
and also
\begin{align}\label{w2wp}
&\|\theta(t)\|_{\mathcal{X}_{a,\sigma}^{s}} + \int_{0}^{t}\|\theta(\tau)\|_{\mathcal{X}_{a,\sigma}^{s+2\beta}}\;d\tau\leq \|\theta_0\|_{\mathcal{X}_{a,\sigma}^{s}}
+ \frac{1}{4}\int_{0}^{t}\|u(\tau)\|_{\mathcal{X}_{a,\sigma}^{s+2\alpha}}\;d\tau +
\frac{1}{2}\int_{0}^{t}\|\theta(\tau)\|_{\mathcal{X}_{a,\sigma}^{s+2\beta}}\;
d\tau
+ C_{a,\sigma,s,\alpha,\beta,T^*},
\end{align}
provided that $\alpha>\frac{1}{2}$ and $\beta>\frac{1}{2}$. Therefore, by observing (\ref{w1wp}) to (\ref{w2wp}), we conclude
\begin{align*}
&\|(u,\theta)(t)\|_{\mathcal{X}_{a,\sigma}^{s}}+\frac{1}{2}\int_{0}^{t}\|u(\tau)\|_{\mathcal{X}_{a,\sigma}^{s+2\alpha}}\;d\tau+\frac{1}{2}\int_{0}^{t}\|\theta(\tau)\|_{\mathcal{X}_{a,\sigma}^{s+2\beta}}\;d\tau\leq \|(u_0,\theta_0)\|_{\mathcal{X}_{a,\sigma}^{s}}
+ C_{a,\sigma,s,\alpha,\beta,T^*},
\end{align*}
for all $t\in[0,T^*)$. This means that
\begin{align}\label{ls5wp}
\int_{0}^{t}\|u(\tau)\|_{\mathcal{X}_{a,\sigma}^{s+2\alpha}}\;d\tau +
\int_{0}^{t}\|\theta(\tau)\|_{\mathcal{X}_{a,\sigma}^{s+2\beta}}\;
d\tau  &\leq C_3,\quad\forall t\in [0,T^*),
\end{align}
where $C_3=C_{a,\sigma,s,\alpha,\beta,u_0,\theta_0,T^*}$ is a positive constant. 

Now, we are ready to prove (\ref{e5}) in the second case. In fact, notice that
$\displaystyle 
\lim_{n,m\to\infty}\|I_1\|_{\mathcal{X}_{a,\sigma}^{s}}=0$ as in the first case.

It is also true that $\displaystyle\lim_{n,m\rightarrow\infty}\|I_{21}^{(2)}\|_{\mathcal{X}_{a,\sigma}^{s}}=0$ as in the first case.
In addition, by  (\ref{wt3}), (\ref{ls2}) and (\ref{ls5wp}), we conclude
\begin{align*}
\nonumber&\int_{0}^{T^*}\|u\cdot\nabla \theta\|_{\mathcal{X}_{a,\sigma}^s}\; d\tau\leq C_{a,\sigma,s}[(T^{*})^{1-\frac{1}{2\alpha}}C_1^{2-\frac{1}{2\alpha}}C_{3}^{\frac{1}{2\alpha}} + (T^{*})^{1-\frac{1}{2\beta}}C_1^{2-\frac{1}{2\beta}}C_{3}^{\frac{1}{2\beta}}]<\infty,
\end{align*}
provided that $(a,s,\sigma)\in \{(0,\infty)$ $\times[-1,0)\times (1,\infty) \}\cup \{[0,\infty)\times\{0\}\times [1,\infty) \}$, $\alpha\geq\frac{1}{2}$ and $\beta\geq\frac{1}{2}$. Thereby, by applying (\ref{wt4}), we deduce that $
\displaystyle \lim_{n,m\to\infty}\|I_{22}\|_{\mathcal{X}_{a,\sigma}^{s}}=0$. Similarly, by  (\ref{wt1}), (\ref{ls2}) and (\ref{ls5wp}), 
$\displaystyle \lim_{n,m\to\infty}\|I_{21}^{(1)}\|_{\mathcal{X}_{a,\sigma}^{s}}=0,$ provided also that $(a,s,\sigma)\in \{(0,\infty)$ $\times[-1,0)\times (1,\infty) \}\cup \{[0,\infty)\times\{0\}\times [1,\infty) \}$, $\alpha\geq\frac{1}{2}$ and $\beta\geq\frac{1}{2}$. Therefore, we reach the limit
$\displaystyle \lim_{n,m\to\infty}\|I_2\|_{\mathcal{X}_{a,\sigma}^{s}}=0.$

Lastly, by (\ref{wilber20}) and (\ref{wt5}), we conclude that
 $\displaystyle \lim_{n,m\to\infty}\|I_{31}^{(2)}\|_{\mathcal{X}_{a,\sigma}^{s}}=0.$ Furthermore, analogously to (\ref{wt3}), and also by (\ref{ls2}) and (\ref{ls5wp}), we  obtain
\begin{align*}
\|I_{32}\|_{\mathcal{X}_{a,\sigma}^{s}}
\leq C_{a,\sigma,s}[(T^*-\kappa_m)^{1-\frac{1}{2\alpha}}C_1^{2-\frac{1}{2\alpha}}C_3^{\frac{1}{2\alpha}}+(T^*-\kappa_m)^{1-\frac{1}{2\beta}}
C_1^{2-\frac{1}{2\beta}} C_3^{\frac{1}{2\beta}}],
	\end{align*}
provided that $(a,s,\sigma)\in \{(0,\infty)$ $\times[-1,0)\times (1,\infty) \}\cup \{[0,\infty)\times\{0\}\times [1,\infty) \}$, $\alpha\geq\frac{1}{2}$ and $\beta\geq\frac{1}{2}$. Hence,  $\displaystyle \lim_{n,m\to\infty}\|I_{32}\|_{\mathcal{X}_{a,\sigma}^{s}}=0$ if $\alpha>\frac{1}{2}$ and $\beta>\frac{1}{2}$. Similarly, (\ref{wt1}), (\ref{ls2}) and (\ref{ls5wp}), we have
\begin{align*}
\|I_{32}^{(1)}\|_{\mathcal{X}_{a,\sigma}^{s}}
\leq C_{a,\sigma,s}(T^*-\kappa_m)^{1-\frac{1}{2\alpha}}C_1^{2-\frac{1}{2\alpha}}C_3^{\frac{1}{2\alpha}},
	\end{align*}
provided that $(a,s,\sigma)\in \{(0,\infty)$ $\times[-1,0)\times (1,\infty) \}\cup \{[0,\infty)\times\{0\}\times [1,\infty) \}$ and $\alpha\geq\frac{1}{2}$. Hence,
$\displaystyle 
\lim_{n,m\to\infty}\|I_{32}^{(1)}\|_{\mathcal{X}_{a,\sigma}^{s}}=0$ if $\alpha>\frac{1}{2}$. Therefore, $\displaystyle 
\lim_{n,m\to\infty}\|I_{3}\|_{\mathcal{X}_{a,\sigma}^{s}}=0$.  This completes the proof of (\ref{e5}) (see also (\ref{wp6})).

The limit (\ref{e5}) lets us know that $((u,\theta)(\kappa_n))_{n\in\mathbb{N}}$ is a Cauchy sequence in  Banach space $\mathcal{X}_{a,\sigma}^s(\mathbb{R}^3)$ (recall that $s\leq0$).  This means that there exists $(u^{(1)},\theta^{(1)})\in\mathcal{X}_{a,\sigma}^s(\mathbb{R}^3)$ such that
\begin{align}\label{wt6}
\lim_{n\rightarrow \infty} \|(u,\theta)(\kappa_n)- (u^{(1)},\theta^{(1)})\|_{\mathcal{X}_{a,\sigma}^{s}}=0.
\end{align}
We claim that the limit above does not rely on $(\kappa_n)_{n\in\mathbb{N}}$. In fact, suppose that $(\rho_n)_{n\in\mathbb{N}}\subseteq (0,T^*)$ is such that $\rho_n\nearrow T^*$, as $n\rightarrow \infty$. Thus, by following the process above, one obtains
\begin{align}\label{wt7}
\lim_{n\rightarrow \infty} \|(u,\theta)(\rho_n)- (u^{(2)},\theta^{(2)})\|_{\mathcal{X}_{a,\sigma}^{s}}=0,
\end{align}
for some $(u^{(2)},\theta^{(2)})\in\mathcal{X}_{a,\sigma}^{s}(\mathbb{R}^3)$. Then, by defining $(\varsigma_n)_{n\in\mathbb{N}}\subseteq (0,T^*)$ by $\varsigma_{2n}=\kappa_n$ and $\varsigma_{2n-1}= \rho_n$, for all $n\in \mathbb{N}$, one verifies that $\varsigma_n\nearrow T^*$, as $n\rightarrow\infty$. Thereby, by reanalyzing the arguments established above, we conclude that there exists $(u^{(3)}, \theta^{(3)}) \in \mathcal{X}_{a,\sigma}^{s}(\mathbb{R}^3)$ such that
\begin{align}\label{wt8}
\lim_{n\rightarrow \infty} \|(u,\theta)(\varsigma_n)- (u^{(3)},\theta^{(3)})\|_{\mathcal{X}_{a,\sigma}^{s}}=0.
\end{align}
As a result, the limits (\ref{wt6}), (\ref{wt7}) and (\ref{wt8}) imply that $(u^{(1)}, \theta^{(1)}) = (u^{(3)}, \theta^{(3)}) = (u^{(2)}, \theta^{(2)})$. Therefore, it follows that
\begin{align}\label{n5}
\lim_{t\nearrow T^*} \|(u,\theta)(t)- (u^{(1)},\theta^{(1)})\|_{\mathcal{X}_{a,\sigma}^{s}}=0.
\end{align}
Hence, by taking $(u^{(1)},\theta^{(1)})$ as initial data for the Boussinesq equations (\ref{MHD-alpha}), we obtain (see Theorems \ref{teoremaexistenciaB2} and \ref{teoremaexistenciaB}) the local existence and uniqueness of $(\bar{u},\bar{\theta})\in C_{\bar{T}}(\mathcal{X}_{a,\sigma}^{s}(\mathbb{R}^3))$ ($\bar{T}>0$) for this system. Thus, $(\tilde{u},\tilde{\theta})\in C_{\bar{T}+T^*}(\mathcal{X}_{a,\sigma}^{s}(\mathbb{R}^3))$ (see (\ref{n5})) given by
$$(\tilde{u},\tilde{\theta})(t)=\left\{
\begin{array}{ll}
(u,\theta)(t), & t\in[0,T^*); \\
(\bar{u},\bar{\theta})(t-T^*), & t\in[T^*,\bar{T}+T^*],
\end{array}
\right.
$$
solves the Boussinesq equations (\ref{MHD-alpha}), with initial data $(u_0,\theta_0)$, in $[0,T^*+\bar{T}]$. This is a contradiction.
\caixa

\noindent\textbf{Proof of Theorem \ref{teoremaB2} ii):} Analogously to (\ref{ls4}) and (\ref{ls4b}), we obtain
\begin{align*}
\nonumber&\|u(T)\|_{\mathcal{X}_{a,\sigma}^{s}} + \int_{t}^{T}\|u(\tau)\|_{\mathcal{X}_{a,\sigma}^{s+2\alpha}}\;d\tau\leq \|u(t)\|_{\mathcal{X}_{a,\sigma}^{s}}
+C_{s}\int_{t}^{T}\|u\|_{\mathcal{X}_{a,\sigma}^{s}}^{2+\frac{s-1}{2\alpha}}\|u\|_{\mathcal{X}_{a,\sigma}^{s+2\alpha}}^{\frac{1-s}{2\alpha}} \;d\tau+\int_{t}^{T} \|\theta\|_{\mathcal{X}_{a,\sigma}^{s}}\;d\tau
\end{align*}
and also
\begin{align*}
\nonumber&\|\theta(T)\|_{\mathcal{X}_{a,\sigma}^{s}} + \int_{t}^{T}\|\theta(\tau)\|_{\mathcal{X}_{a,\sigma}^{s+2\beta}}\;d\tau\leq \|\theta(t)\|_{\mathcal{X}_{a,\sigma}^{s}}\\
&+ C_{s}\int_{t}^{T}[\|u\|_{\mathcal{X}_{a,\sigma}^{s}}^{1+\frac{s}{2\alpha}}\|u\|_{\mathcal{X}_{a,\sigma}^{s+2\alpha}}^{-\frac{s}{2\alpha}}\|\theta\|_{\mathcal{X}_{a,\sigma}^{s}}^{1-\frac{1}{2\beta}}
\|\theta\|_{\mathcal{X}_{a,\sigma}^{s+2\beta}}^{\frac{1}{2\beta}} +
\|u\|_{\mathcal{X}_{a,\sigma}^{s}}^{1-\frac{1}{2\alpha}}\|u\|_{\mathcal{X}_{a,\sigma}^{s+2\alpha}}^{\frac{1}{2\alpha}}\|\theta\|_{\mathcal{X}_{a,\sigma}^{s}}^{1+\frac{s}{2\beta}}
\|\theta\|_{\mathcal{X}_{a,\sigma}^{s+2\beta}}^{-\frac{s}{2\beta}}
]d\tau,
\end{align*}
for all $0\leq t\leq T<T^*$, provided that $a\geq0, \sigma\geq 1$, $\max\{-1,-2\alpha,-2\beta\}\leq s\leq 0$, $\alpha \geq\frac{1}{2}$ and $\beta\geq \frac{1}{2}$. By adding the two last inequalities and applying Young's inequality, one concludes
\begin{align*}
&\|(u,\theta)(T)\|_{\mathcal{X}_{a,\sigma}^{s}} + \frac{1}{2}\int_{t}^{T}[\|u(\tau)\|_{\mathcal{X}_{a,\sigma}^{s+2\alpha}}+\|\theta(\tau)\|_{\mathcal{X}_{a,\sigma}^{s+2\beta}}]\;d\tau\leq \|(u,\theta)(t)\|_{\mathcal{X}_{a,\sigma}^{s}}\\
&+
C_{s,\alpha,\beta}\int_{t}^{T}\|(u,\theta)\|_{\mathcal{X}_{a,\sigma}^{s}}[\|u\|_{\mathcal{X}_{a,\sigma}^{s}}^{\frac{\beta(s+2\alpha)}{\beta(s+2\alpha)-\alpha}}
\|\theta\|_{\mathcal{X}_{a,\sigma}^{s}}^{\frac{-s\beta}{\beta(s+2\alpha)-\alpha}}
+\|u\|_{\mathcal{X}_{a,\sigma}^{s}}^{\frac{-s\alpha}{\alpha(s+2\beta)-\beta}}\|\theta\|_{\mathcal{X}_{a,\sigma}^{s}}^{\frac{\alpha(s+2\beta)}{\alpha(s+2\beta)-\beta}}
+
\|u\|_{\mathcal{X}_{a,\sigma}^{s}}^{\frac{2\alpha}{s+2\alpha-1}}+1]d\tau,
\end{align*}
for all $0\leq t\leq T<T^*$, provided that $s>\max\big\{1-2\alpha,\frac{\alpha(1-2\beta)}{\beta},\frac{\beta(1-2\alpha)}{\alpha}\big\} $, $\alpha>\frac{1}{2}$ and $\beta> \frac{1}{2}$. According to Gronwall's inequality, it follows that
\begin{align}\label{c3}
\nonumber&\|(u,\theta)(T)\|_{\mathcal{X}_{a,\sigma}^{s}}\leq e^{C_{s,\alpha,\beta}(T-t)}\|(u,\theta)(t)\|_{\mathcal{X}_{a,\sigma}^{s}}\\
&\times\exp\Big\{C_{s,\alpha,\beta}\int_{t}^{T}[\|u\|_{\mathcal{X}_{a,\sigma}^{s}}^{\frac{\beta(s+2\alpha)}{\beta(s+2\alpha)-\alpha}}
\|\theta\|_{\mathcal{X}_{a,\sigma}^{s}}^{\frac{-s\beta}{\beta(s+2\alpha)-\alpha}}
+\|u\|_{\mathcal{X}_{a,\sigma}^{s}}^{\frac{-s\alpha}{\alpha(s+2\beta)-\beta}}\|\theta\|_{\mathcal{X}_{a,\sigma}^{s}}^{\frac{\alpha(s+2\beta)}{\alpha(s+2\beta)-\beta}}
+
\|u\|_{\mathcal{X}_{a,\sigma}^{s}}^{\frac{2\alpha}{s+2\alpha-1}}]d\tau\Big\},
\end{align}
for all $0\leq t\leq T<T^*$. By passing to the limit, as  $T\nearrow T^*$, and using Theorem \ref{teoremaB2} i), one infers
\begin{align*}
\int_{t}^{T^*}[\|u\|_{\mathcal{X}_{a,\sigma}^{s}}^{\frac{\beta(s+2\alpha)}{\beta(s+2\alpha)-\alpha}}
\|\theta\|_{\mathcal{X}_{a,\sigma}^{s}}^{\frac{-s\beta}{\beta(s+2\alpha)-\alpha}}
+\|u\|_{\mathcal{X}_{a,\sigma}^{s}}^{\frac{-s\alpha}{\alpha(s+2\beta)-\beta}}\|\theta\|_{\mathcal{X}_{a,\sigma}^{s}}^{\frac{\alpha(s+2\beta)}{\alpha(s+2\beta)-\beta}}
+
\|u\|_{\mathcal{X}_{a,\sigma}^{s}}^{\frac{2\alpha}{s+2\alpha-1}}]d\tau=\infty,
\end{align*}
for all $t\in[0,T^*)$. 

\caixa

\noindent\textbf{Proof of Theorem \ref{teoremaB1} ii):} Similarly to (\ref{ls4wp}) and (\ref{ls4bwp}), we can write
\begin{align*}
\nonumber&\|u(T)\|_{\mathcal{X}_{a,\sigma}^{s}} + \int_{t}^{T}\|u(\tau)\|_{\mathcal{X}_{a,\sigma}^{s+2\alpha}}\;d\tau\leq \|u(t)\|_{\mathcal{X}_{a,\sigma}^{s}}
+C_{a,\sigma,s}\int_{t}^{T}\|u\|_{\mathcal{X}_{a,\sigma}^{s}}^{2-\frac{1}{2\alpha}}\|u\|_{\mathcal{X}_{a,\sigma}^{s+2\alpha}}^{\frac{1}{2\alpha}} \;d\tau+\int_{t}^{T} \|\theta\|_{\mathcal{X}_{a,\sigma}^{s}}\;d\tau
\end{align*}
and also
\begin{align*}
\nonumber&\|\theta(T)\|_{\mathcal{X}_{a,\sigma}^{s}} + \int_{t}^{T}\|\theta(\tau)\|_{\mathcal{X}_{a,\sigma}^{s+2\beta}}\;d\tau\leq \|\theta(t)\|_{\mathcal{X}_{a,\sigma}^{s}}\\
&+ C_{a,\sigma,s}\int_{t}^{T}[\|\theta\|_{\mathcal{X}_{a,\sigma}^{s}}\|u\|_{\mathcal{X}_{a,\sigma}^{s}}^{1-\frac{1}{2\alpha}}\|u\|_{\mathcal{X}_{a,\sigma}^{s+2\alpha}}^{\frac{1}{2\alpha}}
+
\|u\|_{\mathcal{X}_{a,\sigma}^{s}}\|\theta\|_{\mathcal{X}_{a,\sigma}^{s}}^{1-\frac{1}{2\beta}}
\|\theta\|_{\mathcal{X}_{a,\sigma}^{s+2\beta}}^{\frac{1}{2\beta}}
]d\tau,
\end{align*}
for all $0\leq t\leq T<T^*$, provided that  $(a,s,\sigma)\in \{(0,\infty)$ $\times[-1,0)\times (1,\infty) \}\cup \{[0,\infty)\times\{0\}\times [1,\infty) \}$, $\alpha\geq\frac{1}{2}$ and $\beta\geq\frac{1}{2}$. By adding the two  inequalities above and applying Young's inequality, one concludes
\begin{align*}
&\|(u,\theta)(T)\|_{\mathcal{X}_{a,\sigma}^{s}} + \frac{1}{2}\int_{t}^{T}[\|u(\tau)\|_{\mathcal{X}_{a,\sigma}^{s+2\alpha}}+\|\theta(\tau)\|_{\mathcal{X}_{a,\sigma}^{s+2\beta}}]\;d\tau\leq \|(u,\theta)(t)\|_{\mathcal{X}_{a,\sigma}^{s}}\\
&+
C_{a,\sigma,s,\alpha,\beta}\int_{t}^{T}\|(u,\theta)\|_{\mathcal{X}_{a,\sigma}^{s}}[\|(u,\theta)\|_{\mathcal{X}_{a,\sigma}^{s}}^{\frac{2\alpha}{2\alpha-1}}
+\|u\|_{\mathcal{X}_{a,\sigma}^{s}}^{\frac{2\beta}{2\beta-1}}
+1]d\tau,
\end{align*}
for all $0\leq t\leq T<T^*$, provided that $\alpha>\frac{1}{2}$ and $\beta> \frac{1}{2}$. By using Gronwall's inequality, one deduces
\begin{align}\label{c3pn}
\|(u,\theta)(T)\|_{\mathcal{X}_{a,\sigma}^{s}}\leq e^{C_{a,\sigma,s,\alpha,\beta}(T-t)}\|(u,\theta)(t)\|_{\mathcal{X}_{a,\sigma}^{s}}
\exp\Big\{C_{a,\sigma,s,\alpha,\beta}\int_{t}^{T}[\|(u,\theta)\|_{\mathcal{X}_{a,\sigma}^{s}}^{\frac{2\alpha}{2\alpha-1}}
+\|u\|_{\mathcal{X}_{a,\sigma}^{s}}^{\frac{2\beta}{2\beta-1}}]d\tau\Big\},
\end{align}
for all $0\leq t\leq T<T^*$. By passing to the limit, as  $T\nearrow T^*$, and using Theorem \ref{teoremaB1} i), one infers
\begin{align*}
\int_{t}^{T^*}[\|(u,\theta)\|_{\mathcal{X}_{a,\sigma}^{s}}^{\frac{2\alpha}{2\alpha-1}}
+\|u\|_{\mathcal{X}_{a,\sigma}^{s}}^{\frac{2\beta}{2\beta-1}}]d\tau=\infty, \quad \forall  t\in[0,T^*).
\end{align*}
\caixa

\noindent\textbf{Proof of Theorem \ref{teoremaB2}  iii):} From (\ref{c3}), we can write the following inequality:
\begin{align*}
\nonumber&\|u(T)\|_{\mathcal{X}_{a,\sigma}^{s}}^{\frac{\beta(s+2\alpha)}{\beta(s+2\alpha)-\alpha}}
\|\theta(T)\|_{\mathcal{X}_{a,\sigma}^{s}}^{\frac{-s\beta}{\beta(s+2\alpha)-\alpha}}
+\|u(T)\|_{\mathcal{X}_{a,\sigma}^{s}}^{\frac{-s\alpha}{\alpha(s+2\beta)-\beta}}\|\theta(T)\|_{\mathcal{X}_{a,\sigma}^{s}}^{\frac{\alpha(s+2\beta)}{\alpha(s+2\beta)-\beta}}
+
\|u(T)\|_{\mathcal{X}_{a,\sigma}^{s}}^{\frac{2\alpha}{s+2\alpha-1}} \\
&\leq e^{C_{s,\alpha,\beta}(T-t)}[\|(u,\theta)(t)\|_{\mathcal{X}_{a,\sigma}^{s}}^{\frac{2\alpha}{2\alpha+s-1}}+\|(u,\theta)(t)\|_{\mathcal{X}_{a,\sigma}^{s}}^{\frac{2\alpha\beta}{2\alpha\beta+s\beta-\alpha}}
+\|(u,\theta)(t)\|_{\mathcal{X}_{a,\sigma}^{s}}^{\frac{2\alpha\beta}{2\alpha\beta+s\alpha-\beta}}]\\
&\times \exp\Big\{C_{s,\alpha,\beta}\int_{t}^{T}[\|u\|_{\mathcal{X}_{a,\sigma}^{s}}^{\frac{\beta(s+2\alpha)}{\beta(s+2\alpha)-\alpha}}
\|\theta\|_{\mathcal{X}_{a,\sigma}^{s}}^{\frac{-s\beta}{\beta(s+2\alpha)-\alpha}}
+\|u\|_{\mathcal{X}_{a,\sigma}^{s}}^{\frac{-s\alpha}{\alpha(s+2\beta)-\beta}}\|\theta\|_{\mathcal{X}_{a,\sigma}^{s}}^{\frac{\alpha(s+2\beta)}{\alpha(s+2\beta)-\beta}}
+
\|u\|_{\mathcal{X}_{a,\sigma}^{s}}^{\frac{2\alpha}{s+2\alpha-1}}]d\tau\Big\},
\end{align*}
for all $0\leq t\leq T<T^*$, provided that $a\geq0, \sigma\geq 1$, $ s> \max\{1-2\alpha,\tfrac{\alpha(1-2\beta)}{\beta},\tfrac{\beta(1-2\alpha)}{\alpha}\}$, $ s\in [-1,0]$, $\alpha >\frac{1}{2}$ and $\beta> \frac{1}{2}$. This last result means that
\begin{align*}
&\frac{d}{dT}\Big[-C_{s,\alpha,\beta}^{-1}\exp\Big\{-C_{s,\alpha,\beta}\int_t^T[\|u\|_{\mathcal{X}_{a,\sigma}^{s}}^{\frac{\beta(s+2\alpha)}{\beta(s+2\alpha)-\alpha}}
\|\theta\|_{\mathcal{X}_{a,\sigma}^{s}}^{\frac{-s\beta}{\beta(s+2\alpha)-\alpha}}
+\|u\|_{\mathcal{X}_{a,\sigma}^{s}}^{\frac{-s\alpha}{\alpha(s+2\beta)-\beta}}\|\theta\|_{\mathcal{X}_{a,\sigma}^{s}}^{\frac{\alpha(s+2\beta)}{\alpha(s+2\beta)-\beta}}
+
\|u\|_{\mathcal{X}_{a,\sigma}^{s}}^{\frac{2\alpha}{s+2\alpha-1}}]d\tau\Big\}\Big]\\
&\leq e^{C_{s,\alpha,\beta}(T-t)}[\|(u,\theta)(t)\|_{\mathcal{X}_{a,\sigma}^{s}}^{\frac{2\alpha}{2\alpha+s-1}}+\|(u,\theta)(t)\|_{\mathcal{X}_{a,\sigma}^{s}}^{\frac{2\alpha\beta}{2\alpha\beta+s\beta-\alpha}}
+\|(u,\theta)(t)\|_{\mathcal{X}_{a,\sigma}^{s}}^{\frac{2\alpha\beta}{2\alpha\beta+s\alpha-\beta}}].
\end{align*}
By integrating  over $[t,t_0]$, with $0 \leq t \leq t_0 < T^*$, the inequality above, one has
\begin{align*}
&-\exp\Big\{-C_{s,\alpha,\beta}\int_t^{t_0}[\|u\|_{\mathcal{X}_{a,\sigma}^{s}}^{\frac{\beta(s+2\alpha)}{\beta(s+2\alpha)-\alpha}}
\|\theta\|_{\mathcal{X}_{a,\sigma}^{s}}^{\frac{-s\beta}{\beta(s+2\alpha)-\alpha}}
+\|u\|_{\mathcal{X}_{a,\sigma}^{s}}^{\frac{-s\alpha}{\alpha(s+2\beta)-\beta}}\|\theta\|_{\mathcal{X}_{a,\sigma}^{s}}^{\frac{\alpha(s+2\beta)}{\alpha(s+2\beta)-\beta}}
+
\|u\|_{\mathcal{X}_{a,\sigma}^{s}}^{\frac{2\alpha}{s+2\alpha-1}}]d\tau\Big\}+1\\
&\leq 
[e^{C_{s,\alpha,\beta}(t_0-t)}-1][\|(u,\theta)(t)\|_{\mathcal{X}_{a,\sigma}^{s}}^{\frac{2\alpha}{2\alpha+s-1}}+\|(u,\theta)(t)\|_{\mathcal{X}_{a,\sigma}^{s}}^{\frac{2\alpha\beta}{2\alpha\beta+s\beta-\alpha}}
+\|(u,\theta)(t)\|_{\mathcal{X}_{a,\sigma}^{s}}^{\frac{2\alpha\beta}{2\alpha\beta+s\alpha-\beta}}].
\end{align*}
for all $0 \leq t \leq t_0 < T^*$. Therefore, by passing to the limit, as $t_0\nearrow T^*$, Theorem \ref{teoremaB2} ii) leads us to
\begin{align*}
\|(u,\theta)(t)\|_{\mathcal{X}_{a,\sigma}^{s}}^{\frac{2\alpha}{2\alpha+s-1}}+\|(u,\theta)(t)\|_{\mathcal{X}_{a,\sigma}^{s}}^{\frac{2\alpha\beta}{2\alpha\beta+s\beta-\alpha}}
+\|(u,\theta)(t)\|_{\mathcal{X}_{a,\sigma}^{s}}^{\frac{2\alpha\beta}{2\alpha\beta+s\alpha-\beta}}\geq [e^{C_{s,\alpha,\beta}(T^*-t)}-1]^{-1},
\end{align*}
for all $t\in [0,T^*)$.
\caixa

\noindent\textbf{Proof of Theorem  \ref{teoremaB1} iii):} 
By observing (\ref{c3pn}), we have
\begin{align*}
\|(u,\theta)(T)\|_{\mathcal{X}_{a,\sigma}^{s}}^{\frac{2\alpha}{2\alpha-1}}+\|u(T)\|_{\mathcal{X}_{a,\sigma}^{s}}^{\frac{2\beta}{2\beta-1}}&\leq e^{C_{a,\sigma,s,\alpha,\beta}(T-t)}[\|(u,\theta)(t)\|_{\mathcal{X}_{a,\sigma}^{s}}^{\frac{2\alpha}{2\alpha-1}}+\|(u,\theta)(t)\|_{\mathcal{X}_{a,\sigma}^{s}}^{\frac{2\beta}{2\beta-1}}]\\
&\quad\times \exp\Big\{C_{a,\sigma,s,\alpha,\beta}\int_{t}^{T}[\|(u,\theta)\|_{\mathcal{X}_{a,\sigma}^{s}}^{\frac{2\alpha}{2\alpha-1}}
+\|u\|_{\mathcal{X}_{a,\sigma}^{s}}^{\frac{2\beta}{2\beta-1}}]d\tau\Big\},
\end{align*}
for all $0\leq t\leq T<T^*$, provided that $(a,s,\sigma)\in \{(0,\infty)$ $\times[-1,0)\times (1,\infty) \}\cup \{[0,\infty)\times\{0\}\times [1,\infty) \}$, $\alpha>\frac{1}{2}$ and $\beta>\frac{1}{2}$. Consequently, one obtains
\begin{align*}
&\frac{d}{dT}\Big[-C_{a,\sigma,s,\alpha,\beta}^{-1}\exp\Big\{-C_{a,\sigma,s,\alpha,\beta}\int_t^T[\|(u,\theta)\|_{\mathcal{X}_{a,\sigma}^{s}}^{\frac{2\alpha}{2\alpha-1}}
+\|u\|_{\mathcal{X}_{a,\sigma}^{s}}^{\frac{2\beta}{2\beta-1}}]d\tau\Big\}\Big]\\
&\leq e^{C_{a,\sigma,s,\alpha,\beta}(T-t)}[\|(u,\theta)(t)\|_{\mathcal{X}_{a,\sigma}^{s}}^{\frac{2\alpha}{2\alpha-1}}
+\|(u,\theta)(t)\|_{\mathcal{X}_{a,\sigma}^{s}}^{\frac{2\beta}{2\beta-1}}].
\end{align*}
By integrating  over $[t,t_0]$, with $0 \leq t \leq t_0 < T^*$, the inequality above, we deduce
\begin{align*}
&-\exp\Big\{-C_{a,\sigma,s,\alpha,\beta}\int_t^{t_0}[\|(u,\theta)\|_{\mathcal{X}_{a,\sigma}^{s}}^{\frac{2\alpha}{2\alpha-1}}
+\|u\|_{\mathcal{X}_{a,\sigma}^{s}}^{\frac{2\beta}{2\beta-1}}]d\tau\Big\}+1\\
&\leq 
[e^{C_{a,\sigma,s,\alpha,\beta}(t_0-t)}-1][\|(u,\theta)(t)\|_{\mathcal{X}_{a,\sigma}^{s}}^{\frac{2\alpha}{2\alpha-1}}
+\|(u,\theta)(t)\|_{\mathcal{X}_{a,\sigma}^{s}}^{\frac{2\beta}{2\beta-1}}].
\end{align*}
By passing to the limit, as $t_0\nearrow T^*$, Theorem \ref{teoremaB1} ii) allows us to conclude that
\begin{align*}
\|(u,\theta)(t)\|_{\mathcal{X}_{a,\sigma}^{s}}^{\frac{2\alpha}{2\alpha-1}}
+\|(u,\theta)(t)\|_{\mathcal{X}_{a,\sigma}^{s}}^{\frac{2\beta}{2\beta-1}}\geq [e^{C_{a,\sigma,s,\alpha,\beta}(T^*-t)}-1]^{-1}, \quad \forall t\in[0,T^*).
\end{align*}
\caixa

\noindent\textbf{Proof of Corollary \ref{corollaryB1} i)--iv):} By applying the arguments developed in the inequalities (\ref{ls3}) and (\ref{ls3b}), and by using Lemmas \ref{lema2} and \ref{lema1},  we can conclude
\begin{align*}
&\|(u,\theta)(T)\|_{\mathcal{X}_{a,\sigma}^{s}} + \int_{t}^{T}\|u(\tau)\|_{\mathcal{X}_{a,\sigma}^{s+2\alpha}}\;d\tau+ \int_{t}^{T}\|\theta(\tau)\|_{\mathcal{X}_{a,\sigma}^{s+2\beta}}\;d\tau\leq \|(u,\theta)(t)\|_{\mathcal{X}_{a,\sigma}^{s}}\\
&+C_s\int_{t}^{T}[\|u\|_{\mathcal{X}_{\frac{a}{\sigma},\sigma}^{0}}\|u\|_{\mathcal{X}_{a,\sigma}^{s}}^{1-\frac{1}{2\alpha}}\|u\|_{\mathcal{X}_{a,\sigma}^{s+2\alpha}}^{\frac{1}{2\alpha}}
+\|\theta\|_{\mathcal{X}_{\frac{a}{\sigma},\sigma}^{0}}\|u\|_{\mathcal{X}_{a,\sigma}^{s}}^{1-\frac{1}{2\alpha}}\|u\|_{\mathcal{X}_{a,\sigma}^{s+2\alpha}}^{\frac{1}{2\alpha}}+\|u\|_{\mathcal{X}_{\frac{a}{\sigma},\sigma}^{0}}
\|\theta\|_{\mathcal{X}_{a,\sigma}^{s}}^{1-\frac{1}{2\beta}}\|\theta\|_{\mathcal{X}_{a,\sigma}^{s+2\beta}}^{\frac{1}{2\beta}}]\;d\tau\\
&+ \int_{t}^{T}\|\theta(\tau)\|_{\mathcal{X}_{a,\sigma}^{s}}\;d\tau,
\end{align*}
for all $t\in [0,T^*)$, provided that $a\geq0$, $\sigma\geq1$, $s\geq-1$, $\alpha\geq\frac{1}{2}$ and $\beta\geq \frac{1}{2}$. Now, apply Young's inequality  in order to reach the inequality below:
\begin{align*}
&\|(u,\theta)(T)\|_{\mathcal{X}_{a,\sigma}^{s}} + \frac{1}{2}\int_{t}^{T}\|u(\tau)\|_{\mathcal{X}_{a,\sigma}^{s+2\alpha}}\;d\tau+ \frac{1}{2}\int_{t}^{T}\|\theta(\tau)\|_{\mathcal{X}_{a,\sigma}^{s+2\beta}}\;d\tau\leq \|(u,\theta)(t)\|_{\mathcal{X}_{a,\sigma}^{s}}\\
&+C_{s,\alpha,\beta}\int_{t}^{T}\|(u,\theta)\|_{\mathcal{X}_{a,\sigma}^{s}}[\|(u,\theta)\|_{\mathcal{X}_{\frac{a}{\sigma},\sigma}^{0}}^{\frac{2\alpha}{2\alpha-1}}
+\|u\|_{\mathcal{X}_{\frac{a}{\sigma},\sigma}^{0}}^{\frac{2\beta}{2\beta-1}}+1]\;d\tau,
\end{align*}
for all $t\in [0,T^*)$, provided that $\alpha>\frac{1}{2}$ and $\beta> \frac{1}{2}$. By applying  Gronwall's inequality, we can write the following inequality:
\begin{align}\label{n1}
\|(u,\theta)(T)\|_{\mathcal{X}_{a,\sigma}^{s}}&\leq 
 e^{C_{s,\alpha,\beta}(T-t)}\|(u,\theta)(t)\|_{\mathcal{X}_{a,\sigma}^{s}}\exp \{C_{s,\alpha,\beta}\int_t^T [\|(u,\theta)\|_{\mathcal{X}_{\frac{a}{\sigma},\sigma}^{0}}^{\frac{2\alpha}{2\alpha-1}}
+\|u\|_{\mathcal{X}_{\frac{a}{\sigma},\sigma}^{0}}^{\frac{2\beta}{2\beta-1}}]\,d\tau \},
\end{align}
for all $0\leq t\leq T< T^*$. By passing to the limit superior, as $T\nearrow T^*$, Theorem \ref{teoremaB1}  i) implies that
\begin{align}\label{n2}
\int_t^{T^*} [\|(u,\theta)\|_{\mathcal{X}_{\frac{a}{\sigma},\sigma}^{0}}^{\frac{2\alpha}{2\alpha-1}}
+\|u\|_{\mathcal{X}_{\frac{a}{\sigma},\sigma}^{0}}^{\frac{2\beta}{2\beta-1}}]\,d\tau=\infty, \quad \forall t\in[0,T^*),
\end{align}
provided that $\alpha>\frac{1}{2}$, $\beta>\frac{1}{2}$ and $(a,s,\sigma)\in\{(0,\infty)\times [-1,0)\times (1,\infty)\}\cup \{[0,\infty)\times\{0\}\times [1,\infty)\}$. Therefore,  Corollary  \ref{corollaryB1} i), with $n=1$, is proved.

Similarly to (\ref{n1}), it follows the inequality below: 
\begin{align*}
\|(u,\theta)(T)\|_{\mathcal{X}_{\frac{a}{\sigma},\sigma}^{0}}&\leq 
 e^{C_{\alpha,\beta}(T-t)}\|(u,\theta)(t)\|_{\mathcal{X}_{\frac{a}{\sigma},\sigma}^{0}}\exp \{C_{\alpha,\beta}\int_t^T [\|(u,\theta)\|_{\mathcal{X}_{\frac{a}{\sigma},\sigma}^{0}}^{\frac{2\alpha}{2\alpha-1}}
+\|u\|_{\mathcal{X}_{\frac{a}{\sigma},\sigma}^{0}}^{\frac{2\beta}{2\beta-1}}]\,d\tau \},
\end{align*}
for all $t\in [0,T^*)$, provided that $a\geq0$, $\sigma\geq1$, $\alpha>\frac{1}{2}$ and $\beta>\frac{1}{2}$.  Thereby, one obtains
\begin{align*}
\|(u,\theta)(T)\|_{\mathcal{X}_{\frac{a}{\sigma},\sigma}^{0}}^{\frac{2\alpha}{2\alpha-1}}+\|u(T)\|_{\mathcal{X}_{\frac{a}{\sigma},\sigma}^{0}}^{\frac{2\beta}{2\beta-1}}&\leq e^{C_{\alpha,\beta}(T-t)}[\|(u,\theta)(t)\|_{\mathcal{X}_{\frac{a}{\sigma},\sigma}^{0}}^{\frac{2\alpha}{2\alpha-1}}+\|(u,\theta)(t)\|_{\mathcal{X}_{\frac{a}{\sigma},\sigma}^{0}}^{\frac{2\beta}{2\beta-1}}]\\
&\quad\times \exp\Big\{C_{\alpha,\beta}\int_{t}^{T}[\|(u,\theta)\|_{\mathcal{X}_{\frac{a}{\sigma},\sigma}^{0}}^{\frac{2\alpha}{2\alpha-1}}
+\|u\|_{\mathcal{X}_{\frac{a}{\sigma},\sigma}^{0}}^{\frac{2\beta}{2\beta-1}}]d\tau\Big\},
\end{align*}
for all $0\leq t\leq T<T^*$. As a result, we infer
\begin{align*}
&\frac{d}{dT}\Big[-C_{\alpha,\beta}^{-1}\exp\Big\{-C_{\alpha,\beta}\int_t^T[\|(u,\theta)\|_{\mathcal{X}_{\frac{a}{\sigma},\sigma}^{0}}^{\frac{2\alpha}{2\alpha-1}}
+\|u\|_{\mathcal{X}_{\frac{a}{\sigma},\sigma}^{0}}^{\frac{2\beta}{2\beta-1}}]d\tau\Big\}\Big]\leq e^{C_{\alpha,\beta}(T-t)}[\|(u,\theta)(t)\|_{\mathcal{X}_{\frac{a}{\sigma},\sigma}^{0}}^{\frac{2\alpha}{2\alpha-1}}
+\|(u,\theta)(t)\|_{\mathcal{X}_{\frac{a}{\sigma},\sigma}^{0}}^{\frac{2\beta}{2\beta-1}}],
\end{align*}
for all $0\leq t\leq T<T^*$. Integrate  over $[t,t_0]$, with $0 \leq t \leq t_0 < T^*$, to obtain
\begin{align*}
&-\exp\Big\{-C_{\alpha,\beta}\int_t^{t_0}[\|(u,\theta)\|_{\mathcal{X}_{\frac{a}{\sigma},\sigma}^{0}}^{\frac{2\alpha}{2\alpha-1}}
+\|u\|_{\mathcal{X}_{\frac{a}{\sigma},\sigma}^{0}}^{\frac{2\beta}{2\beta-1}}]d\tau\Big\}+1\leq 
[e^{C_{\alpha,\beta}(t_0-t)}-1][\|(u,\theta)(t)\|_{\mathcal{X}_{\frac{a}{\sigma},\sigma}^{0}}^{\frac{2\alpha}{2\alpha-1}}
+\|(u,\theta)(t)\|_{\mathcal{X}_{\frac{a}{\sigma},\sigma}^{0}}^{\frac{2\beta}{2\beta-1}}].
\end{align*}
By taking the limit, as $t_0\nearrow T^*$, Corollary \ref{corollaryB1} i), with $n=1$, implies that
\begin{align}\label{n3}
\|(u,\theta)(t)\|_{\mathcal{X}_{\frac{a}{\sigma},\sigma}^{0}}^{\frac{2\alpha}{2\alpha-1}}
+\|(u,\theta)(t)\|_{\mathcal{X}_{\frac{a}{\sigma},\sigma}^{0}}^{\frac{2\beta}{2\beta-1}}\geq [e^{C_{\alpha,\beta}(T^*-t)}-1]^{-1}, \quad \forall t\in[0,T^*),
\end{align}
 provided that $\alpha>\frac{1}{2}$, $\beta>\frac{1}{2}$, $a\geq0$ and $\sigma\geq1$. Therefore,  Corollary  \ref{corollaryB1} ii), with $n=1$, is proved.

On the other hand, note that  $\mathcal{X}_{a,\sigma}^s(\mathbb{R}^3)\hookrightarrow \mathcal{X}_{\frac{a}{(\sqrt{\sigma})^n},\sigma}^s(\mathbb{R}^3)$, for all $n\in\mathbb{N}$
(since $\sigma\geq  1$). More accurately, the inequality $\displaystyle\|\cdot\|_{\mathcal{X}_{\frac{a}{(\sqrt{\sigma})^n},\sigma}^s}\leq \|\cdot\|_{\mathcal{X}_{a,\sigma}^s}$ holds. As a result, we have$\footnote{From now on $T^*_{\omega}< \infty$ denotes the first blow-up time for the solution $(u,\theta)\in C([0,T^*_{\omega}); \mathcal{X}_{\omega,\sigma}^{s}(\mathbb{R}^3))$, where $\omega>0.$}$ $(u,\theta)\in C([0,T_{a}^*),$ $\mathcal{X}_{\frac{a}{(\sqrt{\sigma})^n},\sigma}^s(\mathbb{R}^3))$ (recall that $(u,\theta)\in C([0,T_{a}^*),$ $\mathcal{X}_{a,\sigma}^s(\mathbb{R}^3))$) and, consequently, 
\begin{align}\label{wilber10}
T_{\frac{a}{(\sqrt{\sigma})^n}}^*\geq T_a^*,\quad\forall n\in\mathbb{N}.
\end{align}
Moreover, from Corollary \ref{corollaryB1} ii) (with $n=1$) and Lemma \ref{lema0} (with $\mu=\sqrt{\sigma}$), one deduces
\begin{align*}
\nonumber [e^{C_{\alpha,\beta}(T^{*}_a-t)}-1]^{-1}&\leq  \| (u,\theta)(t)\|_{\mathcal{X}^0_{\frac{a}{\sigma},\sigma}}^{\frac{2\alpha}{2\alpha-1}}+\| (u,\theta)(t)\|_{\mathcal{X}^0_{\frac{a}{\sigma},\sigma}}^{\frac{2\beta}{2\beta-1}}\\
&\leq C_{a,\sigma,s,\alpha,\beta}[\|(u,\theta)(t)\|_{\mathcal{X}_{\frac{a}{\sqrt{\sigma}},\sigma}^s}^{\frac{2\alpha}{2\alpha-1}}+\|(u,\theta)(t)\|_{\mathcal{X}_{\frac{a}{\sqrt{\sigma}},\sigma}^s}^{\frac{2\beta}{2\beta-1}}],
\end{align*}
for all $t\in [0,T_a^*)$, provided that  $a>0$, $\sigma>1$ and $s\leq0$. This means that
\begin{align}\label{i)n=1}
 \|(u,\theta)(t)\|_{\mathcal{X}_{\frac{a}{\sqrt{\sigma}},\sigma}^s}^{\frac{2\alpha}{2\alpha-1}}+\|(u,\theta)(t)\|_{\mathcal{X}_{\frac{a}{\sqrt{\sigma}},\sigma}^s}^{\frac{2\beta}{2\beta-1}}\geq C_{a,\sigma,s,\alpha,\beta}[e^{C_{\alpha,\beta}(T^{*}_a-t)}-1]^{-1},
\end{align}
for all $t\in [0,T_a^*)$. Therefore, the proof of Corollary \ref{corollaryB1} iii), with $n=1$, is established.

It is easy to see that  (\ref{i)n=1})  implies the next limit: 
\begin{align}\label{esqueci2}
\limsup_{t\nearrow T_a^*}\|(u,\theta)(t)\|_{\mathcal{X}_{\frac{a}{\sqrt{\sigma}},\sigma}^s}=\infty.
\end{align}
This result establishes the proof of Corollary \ref{corollaryB1} iv), with $n=2$ (the case $n=1$ was proved in Theorems \ref{teoremaB2} i) and \ref{teoremaB1} i) and, in addition, we also have
\begin{align}\label{wilber11a}
T_a^*\geq T_{\frac{a}{\sqrt{\sigma}}}^*.
\end{align}
 By (\ref{wilber10}) and (\ref{wilber11a}), we reach
\begin{align}\label{wilber11}
T_a^*= T_{\frac{a}{\sqrt{\sigma}}}^*.
\end{align}
On the other hand, by replacing $a$ by $\frac{a}{\sqrt{\sigma}}$ in the proof of Corollary \ref{corollaryB1} i), with $n=1$, and applying (\ref{esqueci2}), one obtains the verification of Corollary \ref{corollaryB1} i), with $n=2$, similarly to (\ref{n2}).

 As a consequence, by using this Corollary \ref{corollaryB1} i), with $n=2$,  we can prove Corollary \ref{corollaryB1} ii), with $n=2$, analogously to (\ref{n3}).

By replacing $a$ by $\frac{a}{\sqrt{\sigma}}$ in the proof of (\ref{i)n=1}), applying Corollary \ref{corollaryB1} ii) (with $n=2$) and Lemma \ref{lema0} (with $\mu=\sqrt{\sigma}$), we conclude that
\begin{align*}
 \|(u,\theta)(t)\|_{\mathcal{X}_{\frac{a}{\sigma},\sigma}^s}^{\frac{2\alpha}{2\alpha-1}}
+\|(u,\theta)(t)\|_{\mathcal{X}_{\frac{a}{\sigma},\sigma}^s}^{\frac{2\beta}{2\beta-1}}\geq C_{a,\sigma,s,\alpha,\beta}[e^{C_{\alpha,\beta}(T^{*}_{\frac{a}{\sqrt{\sigma}}}-t)}-1]^{-1},
\end{align*}
for all $t\in [0,T_{\frac{a}{\sqrt{\sigma}}}^*)$. Thus, (\ref{wilber11}) leads us to
\begin{align}\label{n4}
 \|(u,\theta)(t)\|_{\mathcal{X}_{\frac{a}{\sigma},\sigma}^s}^{\frac{2\alpha}{2\alpha-1}}
+\|(u,\theta)(t)\|_{\mathcal{X}_{\frac{a}{\sigma},\sigma}^s}^{\frac{2\beta}{2\beta-1}}\geq C_{a,\sigma,s,\alpha,\beta}[e^{C_{\alpha,\beta}(T^{*}_{a}-t)}-1]^{-1},
\end{align}
for all $t\in [0,T_{a}^*)$. Therefore, we have proved Corollary  \ref{corollaryB1} iii), with $n=2$.

Again, by taking  the limit in  (\ref{n4}), as   $t\nearrow T^*_a$, we can write
$$\displaystyle \limsup_{t\nearrow T_a^*}\|(u,\theta)(t)\|_{\mathcal{X}_{\frac{a}{\sigma},\sigma}^s(\mathbb{R}^3)}=\infty.$$
This limit shows that the proof of Corollary \ref{corollaryB1} iv), with $n=3$, is complete and, moreover, $T^*_a \geq T^*_{\frac{a}{\sigma}}$. By (\ref{wilber10}), one has
$T^*_a = T^*_{\frac{a}{\sigma}}$.

Therefore, inductively, we can prove that $T^*_a = T^*_{\frac{a}{(\sqrt{\sigma})^n}}$, for all $n\in \mathbb{N}$, and Corollary \ref{corollaryB1} i)--iv) hold.
\caixa

\noindent\textbf{Proof of Corollary \ref{corollaryB1} v):} By Corollary \ref{corollaryB1} ii), one has
\begin{align*}
[e^{C_{\alpha,\beta}(T^*-t)}-1]^{-1}&\leq \left(\int_{\mathbb{R}^3}e^{\frac{a}{\sigma(\sqrt{\sigma})^{n-1}}|\xi|^{\frac{1}{\sigma}}}|(\hat{u},\hat{\theta})(t)|\;d\xi\right)^{\frac{2\alpha}{2\alpha-1}}
+\left(\int_{\mathbb{R}^3}e^{\frac{a}{\sigma(\sqrt{\sigma})^{n-1}}|\xi|^{\frac{1}{\sigma}}}|(\hat{u},\hat{\theta})(t)|\;d\xi\right)^{\frac{2\beta}{2\beta-1}},
\end{align*}
for all $t\in[0,T^*)$ and $n\in\mathbb{N}$, provided that $a>0$, $\sigma> 1$, $s\in[-1,0],$ $\alpha>\frac{1}{2}$ and $\beta>\frac{1}{2}$. On the other hand, by Lemma \ref{lema0}, we obtain
$$\|(u,\theta)(t)\|_{\mathcal{X}_{\frac{a}{\sigma},\sigma}^{0}}\leq C_{a,s,\sigma} \|(u,\theta)(t)\|_{\mathcal{X}_{a,\sigma}^{s}}<\infty,\quad\forall t\in[0,T^*),$$
provided that $a>0$, $\sigma>1$ and $s\leq0$. Thus, by passing to the limit, as $n\rightarrow \infty$,  one infers
\begin{align*}
[e^{C_{\alpha,\beta}(T^*-t)}-1]^{-1}&\leq\|(u,\theta)(t)\|_{\mathcal{X}^0}^{\frac{2\alpha}{2\alpha-1}}+ \|(u,\theta)(t)\|_{\mathcal{X}^0}^{\frac{2\beta}{2\beta-1}},\quad\forall t\in[0,T^*).
\end{align*}
\caixa

\noindent\textbf{Proof of Corollary \ref{corollaryB1} vi):} By applying  Corollary \ref{corollaryB1} iii), we obtain
\begin{align*}
C_{a,\sigma,s,\alpha,\beta}[e^{C_{\alpha,\beta}(T^*-t)}-1]^{-1}&\leq \left(\int_{\mathbb{R}^3}|\xi|^se^{\frac{a}{(\sqrt{\sigma})^{n}}|\xi|^{\frac{1}{\sigma}}}|(\hat{u},\hat{\theta})(t)|\;d\xi\right)^{\frac{2\alpha}{2\alpha-1}}
+\left(\int_{\mathbb{R}^3}|\xi|^se^{\frac{a}{(\sqrt{\sigma})^{n}}|\xi|^{\frac{1}{\sigma}}}|(\hat{u},\hat{\theta})(t)|\;d\xi\right)^{\frac{2\beta}{2\beta-1}},
\end{align*}
for all $t\in[0,T^*)$ and $n\in\mathbb{N}$, provided that $a>0$, $\sigma> 1$, $s\in[-1,0],$ $\alpha>\frac{1}{2}$ and $\beta>\frac{1}{2}$. On the other hand, it is easy to check that
$$|\xi|^se^{\frac{a}{(\sqrt{\sigma})^{n}}|\xi|^{\frac{1}{\sigma}}}|(\hat{u},\hat{\theta})(\xi,t)|\leq|\xi|^se^{a|\xi|^{\frac{1}{\sigma}}}|(\hat{u},\hat{\theta})(\xi,t)|\in L^1(\mathbb{R}^3),\quad\forall \xi\in \mathbb{R}^3,t\in[0,T^*),n\in\mathbb{N},$$
provided that $a\geq0$, $\sigma\geq1$ and $s\in \mathbb{R}$.
Hence, by taking the limit, as $n\rightarrow \infty$,  we infer
\begin{align*}
C_{a,\sigma,s,\alpha,\beta}[e^{C_{\alpha,\beta}(T^*-t)}-1]^{-1}&\leq\|(u,\theta)(t)\|_{\mathcal{X}^s}^{\frac{2\alpha}{2\alpha-1}}+ \|(u,\theta)(t)\|_{\mathcal{X}^s}^{\frac{2\beta}{2\beta-1}},\quad\forall t\in[0,T^*).
\end{align*}
\caixa

\noindent\textbf{Proof of Corollary \ref{corollaryB2}:} Observe that, by Corollary \ref{corollaryB1} v), with $\beta=\alpha$, one infers
\begin{align}\label{estimativaalphabeta}
C_{\alpha}[e^{C_{\alpha}(T^*-t)}-1]^{-\frac{2\alpha-1}{2\alpha}}\leq \|(u,\theta)(t)\|_{\mathcal{X}^0}, \quad \forall t\in[0,T^*),
\end{align}
provided that $a>0$, $\sigma>1$, $\alpha>\frac{1}{2}$ and  $s\in[-1,0]$. In order to apply Lemma \ref{lema} to (\ref{estimativaalphabeta}), choose $\delta=s+\frac{k}{\sigma}$, with $\sigma_0$ the integer part of $-s\sigma$, $n_0>\sigma_0$ (natural) and $k\geq n_0$ (natural),   to obtain
\begin{align*}
C_{\alpha}[e^{C_{\alpha}(T^*-t)}-1]^{-\frac{2\alpha-1}{2\alpha}}\leq
C_0\|(u,\theta)(t)\|_{L^2}^{\frac{2(s+\frac{k}{\sigma})}{2(s+\frac{k}{\sigma})+3}}\|(u,\theta)(t)\|_{\mathcal{X}^{s+\frac{k}{\sigma}}}^{\frac{3}{2(s+\frac{k}{\sigma})+3}}, \quad \forall t\in[0,T^*).
\end{align*}
(Notice that the last norm on the right-hand side of the inequality above is finite by Lemma \ref{lemanovo2}). On the other hand, by applying  $L^2$-inner product to the Boussinesq equations (\ref{MHD-alpha}), with $u$ and $\theta$,  and integrating the results obtained over $[0,t]$, it follows that
\begin{align*}
\frac{1}{2}\frac{d}{dt}\|(u,\theta)(t)\|_{L^2}^2+\|(-\Delta)^{\frac{\alpha}{2}}u(t)\|_{L^2}^2+\|(-\Delta)^{\frac{\alpha}{2}}\theta(t)\|_{L^2}^2 &\leq \|(u,\theta)(t)\|_{L^2}^{2},\quad\forall t\in [0,T^*).
\end{align*}
Consequently, by Gronwall's Lemma, one concludes
\begin{align}\label{normal2}
 \|(u,\theta)(t)\|_{L^2}\leq e^{T^*}\|(u_0,\theta_0)\|_{L^2},\quad\forall t\in [0,T^*).
\end{align}
Thus, by using (\ref{normal2}), one deduces that
\begin{align}\label{W1}
C_{T^*}[C_{\alpha,T^*}[e^{C_{\alpha}(T^*-t)}-1]^{-\frac{2\alpha-1}{2\alpha}}\|(u_0,\theta_0)\|_{L^2(\mathbb{R}^3)}^{-1}]^{\frac{2}{3}(s+\frac{k}{\sigma})+1}\|(u_0,\theta_0)\|_{L^2}\leq \|(u,\theta)(t)\|_{\mathcal{X}^{s+\frac{k}{\sigma}}},
\end{align}
for all $t\in[0,T^*)$. Thereby, by multiplying (\ref{W1}) by $\frac{a^k}{k!}$ and summing over $k\geq n_0$ the result obtained, we reach the following inequality:
\begin{align*}
&C_{T^*}[C_{\alpha,T^*}[e^{C_{\alpha}(T^*-t)}-1]^{-\frac{2\alpha-1}{2\alpha}}\|(u_0,\theta_0)\|_{L^2(\mathbb{R}^3)}^{-1}]^{\frac{2s}{3}+1}\|(u_0,\theta_0)\|_{L^2}\\
&\times\sum_{k\geq n_0} \frac{\{a[C_{\alpha,T^*}[e^{C_{\alpha}(T^*-t)}-1]^{-\frac{2\alpha-1}{2\alpha}}\|(u_0,\theta_0)\|_{L^2}^{-1}]^{\frac{2}{3\sigma}}\}^k}{k!}\leq \sum_{k\in\mathbb{N}}\frac{a^k}{k!}\|(u,\theta)(t)\|_{\mathcal{X}^{s+\frac{k}{\sigma}}},
\end{align*}
for all $t\in[0,T^*)$. This means that
\begin{align*}
&C_{T^*}[C_{\alpha,T^*}[e^{C_{\alpha}(T^*-t)}-1]^{-\frac{2\alpha-1}{2\alpha}}\|(u_0,\theta_0)\|_{L^2(\mathbb{R}^3)}^{-1}]^{\frac{2s}{3}+1}\|(u_0,\theta_0)\|_{L^2}\\
&\times\sum_{k\geq n_0} \frac{\{a[C_{\alpha,T^*}[e^{C_{\alpha}(T^*-t)}-1]^{-\frac{2\alpha-1}{2\alpha}}\|(u_0,\theta_0)\|_{L^2}^{-1}]^{\frac{2}{3\sigma}}\}^k}{k!}\leq \|(u,\theta)(t)\|_{\mathcal{X}_{a,\sigma}^{s}},
\end{align*}
for all $t\in[0,T^*)$. On the other hand, define
$$f(x)=\Big[e^x-\displaystyle\sum_{k=0}^{n_0-1}\frac{x^k}{k!}\Big][x^{-n_0}e^{-\frac{x}{2}}],\quad \forall x>0.$$
It is easy to check that  there exists a positive constant $C_{s,\sigma}$ such that $f(x)\geq C_{s,\sigma}$, for all $x>0$. Thus, in particular, by taking $x=a[C_{\alpha,T^*}[e^{C_{\alpha}(T^*-t)}-1]^{-\frac{2\alpha-1}{2\alpha}}\|(u_0,\theta_0)\|_{L^2}^{-1}]^{\frac{2}{3\sigma}}$, we deduce
\begin{align*}
\|(u,\theta)(t)\|_{\mathcal{X}_{a,\sigma}^{s}}&\geq a^{n_0}C_{s,\sigma,\alpha,T^*}[e^{C_{\alpha}(T^*-t)}-1]^{-\frac{2\alpha-1}{2\alpha}(\frac{2s}{3}+\frac{2n_0}{\sigma}+1)}\|(u_0,\theta_0)\|_{L^2}^{-\frac{2}{3}(s+\frac{n_0}{\sigma})}\\
&\quad\times \exp\Big\{aC_{\sigma,\alpha,T^*}
[e^{C_{\alpha}(T^*-t)}-1]^{-\frac{2\alpha-1}{3\alpha\sigma}}\|(u_0,\theta_0)\|_{L^2}^{-\frac{2}{3\sigma}}\Big\},\quad\forall t\in[0,T^*).
\end{align*}
\caixa

\noindent \textbf{Declarations:}\\

\noindent \textbf{Ethics approval and consent to participate:}
Not applicable.\\

\noindent \textbf{Consent for publication:}
Not applicable.\\

\noindent \textbf{Availability of data and materials:}
Not applicable (this manuscript does not report data generation or analysis).\\

\noindent \textbf{Conflicts of interest/Competing interests:}
The authors have no conflicts of interest to declare that are relevant to the content of this article.\\

\noindent \textbf{Funding:}
This work was supported by FAPESP grant 2024/15587-1.\\

\noindent \textbf{Authors' contributions:}
P.G., W.M. and T.S.  wrote and reviewed the manuscript.\\

\noindent \textbf{Acknowledgments:}
T.S.R. Santos is partially supported by FAPESP grant 2024/15587-1.

\end{document}